\newtheorem{main}{Theorem}
\newtheorem{mcor}[main]{Corollary}
\newtheorem{theorem}{Theorem}[section]
\newtheorem{lem}[theorem]{Lemma}
\newtheorem{prop}[theorem]{Proposition}
\newtheorem{cor}[theorem]{Corollary}
\theoremstyle{definition}
\newtheorem{definition}[theorem]{Definition}
\newtheorem{notation}[theorem]{Notation}
\newtheorem{Remarks}[theorem]{Remarks}
\newtheorem{claim}[theorem]{Claim}
\def\ca{\curvearrowright}
\def\ra{\rightarrow}
\def\la{\lambda}
\def\La{\Lambda}
\def\bb{\mathbb}
\def\om{\omega}
\def\ve{\varepsilon}
\def\Sg{\Sigma}
\def\bb{\mathbb}
\def\g{\gamma}
\def\si{\sigma}
\def\G{\Gamma}
\def\Cal{\mathcal}
\numberwithin{equation}{section}
\DeclareMathOperator*{\PSL}{PSL}
\theoremstyle{remark}
\newcommand{\set}[1]{ \{ #1 \} }
\newcommand{\norm}[1]{\| #1 \|}
\newcommand{\generator}[1]{\langle #1 \rangle}
\begin{document}

\title[W$^*$-Rigidity for Products of Hyperbolic Groups]{W$^*$-Rigidity for the von Neumann Algebras of Products of Hyperbolic Groups}
\author[I. Chifan]{Ionut Chifan}
\address{Department of Mathematics, The University of Iowa, 14 MacLean Hall, IA  
52242, USA}
\email{ionut-chifan@uiowa.edu}
\thanks{I.C.\ was supported by NSF Grant DMS \#1301370.}

\author[R. de Santiago]{Rolando de Santiago}
\address{Department of Mathematics,The University of Iowa, 14 MacLean Hall, IA  
52242, USA}
\email{rolando-desantiago@uiowa.edu}
\thanks{R.dS.\ was supported in part by GAANN fellowship grants \#P200A100028 and \#P200A120058 and by a Sloan Center minigrant.}

\author[T. Sinclair]{Thomas Sinclair}
\address{Department of Mathematics, Purdue University, 150 N University St, West Lafayette, IN 47907-2067, USA}
\email{tsincla@purdue.edu}


\date{\today }
\dedicatory{}
\keywords{}

\begin{abstract} We show that if $\G = \G_1\times\dotsb\times \G_n$ is a product of $n\geq 2$ non-elementary ICC hyperbolic groups then any discrete group $\La$ which is $W^*$-equivalent to $\G$ decomposes as a $k$-fold direct sum exactly when $k=n$. This gives a group-level strengthening of Ozawa and Popa's unique prime decomposition theorem by removing all assumptions on the group $\La$. This result in combination with Margulis' normal subgroup theorem allows us to give examples of lattices in the same Lie group which do not generate stably equivalent II$_1$ factors.

\end{abstract}

\maketitle


\section{Introduction}

One of the most basic and intractable questions in the study of von Neumann algebras asks how much algebraic information about a (discrete) group $\G$ can detected at the level of the associated group von Neumann algebra $L(\G)$.  For instance, whether the number of generators is an isomorphism invariant among the class of free group factors remains one of the foremost open problems in the field, dating from the seminal work of Murray and von Neumann. Connes' celebrated classification of injective factors \cite{Co76} implies that almost no algebraic information can be recovered from $L(\G)$ when $\G$ is amenable. Even in the non-amenable case work of Dykema \cite{Dy93} shows that much information is lost in general for free products of amenable groups. However, a bold conjecture of Connes from the 1980s asserts that in the case that $\G$ is extremely non-amenable (an ICC property (T) group) that the algebraic structure of $\G$ can essentially be completely recovered from $L(\G)$, i.e., if $L(\G)$ is isomorphic to $L(\La)$ then $\La$ must be isomorphic to $\G$. This phenomena has been termed ``W$^*$-superrigidity'' by Sorin Popa. The first examples of W$^*$-superrigid groups were discovered by Ioana, Popa, and Vaes \cite{IPV10} using Popa's deformation/rigidity theory \cite{Po06}. 

Groups with property (T) belong to a much broader class of ``rigid'' groups which also includes direct products of non-amenable groups, irreducible lattices in higher-rank Lie groups, as well as mapping class groups of surfaces of suitable high genus and braid groups. Such groups have been the basis of many remarkable classification results in the last decade in the theory of von Neumann algebras as well as the theory of measured equivalence relations. In this paper, we will focus on class of groups which arise as finite direct products of ICC hyperbolic groups, where essentially the presence of nontrivial commutation exactly between the factor groups provides a very strong rigidity property to exploit. This particular case of ``product rigidity'' has already been used to great effect in the fundamental work of Ozawa and Popa \cite{OP03} where such group factors are shown to admit a unique tensor product decomposition into prime factors. In particular this shows that if $\G = \G_1\times\dotsb\times\G_m$ and $\La_1\times\dotsb\times\La_n$ are products of ICC hyperbolic groups $\G_1,\dotsc, \G_m, \La_1, \dotsc, \La_n$ then if $L(\G)\cong L(\La)$ it must be the case that $m=n$.

Building on the techniques from \cite{OP03} and \cite{Io11} we show in this paper that the group von Neumann algebra $L(\G)$ of such a group completely remembers the product structure of the generating group $\G$; that is, any isomorphic group von Neumann algebra $L(\La)$ admits its unique prime decomposition at the level of the generating group $\La$ with no \emph{a priori} assumption of a product structure on that group. This appears to be the strongest type of rigidity possible for the class of nontrivial products of ICC hyperbolic groups. The key insight into upgrading Ozawa and Popa's result comes from an ultraproduct-based ``discretization'' technique for group von Neumann algebras which was introduced by Adrian Ioana \cite[Theorem 3.1]{Io11}. This allows one to transfer the existence of ``large'' commuting subgroups in $\G$ through (stable) W$^*$-equivalence to the target group $\La$. The argument then proceeds through a series intertwining lemmas using (strong) solidity results from \cite{CSU11, PV12} to show this implies the existence of commuting, generating subgroups, after which Ozawa and Popa's unique prime decomposition theorem obtains.

Before stating the main result, we describe a more general class of groups for which product rigidity holds. Recall that a countable discrete group $\G$ is said to belong to the class $\mathcal S$ of Ozawa \cite{Oz05} if there is a sequence of maps $\mu_n: \G\to \ell^1(\G)$, where $\|\mu_n(\g)\| = 1$ and $\lim_n\|\mu_n(s\g t) - \la_s\mu_n(\g)\|=0$ for all $\g,s,t\in \G$. By \cite{PV12}, $\G$ belongs to the class $\mathcal S$ if and only if $\G$ belongs to the class $\mathcal{QH}_{reg}$ of \cite{CS11} and is exact. 

We denote by $\mathcal S_{nf}$ the class of all non-amenable, ICC groups in $\mathcal S$.

\begin{main}\label{main3} Let $\G_1,\G_2,\ldots ,\G_n\in {\mathcal S}_{nf}$ be weakly amenable groups with $\G=\G_1\times \G_2\times\cdots\times\G_n$ and denote by $M=L(\G)$. Let $t>0$ be a scalar and let $\La$ be an arbitrary group such that $M^t =L(\La)$.  Then one can find subgroups $\La_1,\La_2,\ldots,\La_n<\La$ with $\La_1\times \La_2\times \cdots \times \La_n=\La$, scalars $t_i>0$ with $t_1t_2\cdots t_n=t$, and a unitary $u\in M$ such that $uL(\La_i)u^*= {L(\G_i)}^{t_i}$ for all $1\leq i\leq n$.
\end{main}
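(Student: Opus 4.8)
The plan is to use Ioana's ultraproduct "discretization" technique to produce, inside the ultrapower $M^\omega$, a commuting family of subgroups coming from $\La$ that matches the product structure $\G_1 \times \cdots \times \G_n$ of $\G$, then descend this back to $M$ itself via intertwining, and finally invoke Ozawa--Popa's unique prime decomposition theorem. To set up, after rescaling we may assume $t=1$ and $L(\G)=L(\La)=M$. The product decomposition $\G = \G_1\times\cdots\times\G_n$ gives $n$ commuting subgroups $\G_i$ with $\G_i$ and $\widehat{\G_i}:=\prod_{j\ne i}\G_j$ generating $\G$ and with $L(\G_i) \vee L(\widehat{\G_i}) = M$, $L(\G_i)' \cap M = L(\widehat{\G_i})$. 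I would first pass to $M^\omega$ and apply \cite[Theorem 3.1]{Io11} to transport, for each $i$, the fact that $\La$ is "large" inside $M^\omega$ against the relatively amenable/solid pieces: more precisely, I would show that the copy of $\La$ sitting diagonally in $M^\omega$ can be split according to the tensor factorization $M = L(\G_1)\bar\otimes\cdots\bar\otimes L(\G_n)$, yielding for each $i$ a subgroup $\Sigma_i < \La$ whose image in $M^\omega$ essentially lives in $L(\G_i)^\omega$.

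The second and technically heaviest step is the intertwining analysis. Using that each $\G_i \in \mathcal S_{nf}$ is weakly amenable, the factor $L(\G_i)$ is strongly solid (by \cite{CSU11, PV12}), and more importantly $M = \bar\otimes_i L(\G_i)$ has the property that any von Neumann subalgebra with no amenable direct summand which is "non-intertwinable" into the complementary tensor factors must be large in a controlled sense. I would run a sequence of Popa intertwining ($\prec$) arguments: show first that $L(\Sigma_i) \prec_M L(\widehat{\G_i})$ is impossible (else $\Sigma_i$ would be amenable-in-a-corner, contradicting that the $\Sigma_i$ jointly generate something with the right "dimension"), hence $L(\Sigma_i)' \cap M \prec_M L(\G_i)$; then bootstrap, using that the $\Sigma_i$ commute pairwise and generate $\La$, to conclude $L(\Sigma_i)$ and $L(\widehat{\Sigma_i})$ give a tensor splitting of $M$ unitarily conjugate to $L(\G_i) \bar\otimes L(\widehat{\G_i})$. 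Concretely, one gets a unitary $u_0 \in M$ and subalgebras such that $u_0 L(\Sigma_i) u_0^* \subseteq L(\G_i)^{t_i}$ (an amplification) with matching commutants, using the commutation to control the cocycle/measurement of "how much" of each tensor factor is captured.

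The third step upgrades "subalgebras" to "group von Neumann subalgebras of the generating group" and invokes rigidity: having located commuting subgroups $\La_1,\ldots,\La_n < \La$ with $L(\La_i) \vee L(\widehat{\La_i}) = M$, $L(\La_i)'\cap M = L(\widehat{\La_i})$, and $L(\La_i)$ a (corner of a) factor in $\mathcal S_{nf}$, one applies the unique prime decomposition theorem of Ozawa--Popa \cite{OP03} to match the two tensor decompositions $\bar\otimes_i L(\G_i)$ and $\bar\otimes_i L(\La_i)$ up to amplification and a unitary, producing the scalars $t_i$ with $\prod t_i = 1$ (respectively $= t$ before rescaling) and the unitary $u$ with $u L(\La_i) u^* = L(\G_i)^{t_i}$. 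One must separately verify that $\La_1 \times \cdots \times \La_n = \La$ \emph{as groups} (not merely at the algebra level): this follows because the $L(\La_i)$ generate $M = L(\La)$, the $\La_i$ pairwise commute, and a standard argument shows the product map $\La_1 \times \cdots \times \La_n \to \La$ is then an isomorphism onto a subgroup whose group von Neumann algebra is all of $L(\La)$, hence it is onto.

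The main obstacle I anticipate is the second step — controlling the intertwining so that the subgroups $\Sigma_i$ produced by Ioana's discretization assemble into a genuine \emph{direct product decomposition of $\La$} rather than merely into $n$ commuting subgroups with no control on how they fit together or on the "leftover" between $L(\widehat{\Sigma_i})$ and $L(\Sigma_i)'\cap M$. Ensuring that the amplification parameters $t_i$ multiply correctly and that there is no residual amenable or "diffuse but uncaptured" part requires carefully combining the strong solidity of each $L(\G_i)$ (to kill amenable corners) with a dimension/trace-counting argument across the tensor factorization, and this is where the weak amenability hypothesis and the full strength of \cite{Io11, OP03} must be orchestrated together.
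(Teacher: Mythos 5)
Your high-level architecture --- Ioana's comultiplication/discretization, then intertwining plus solidity, then Ozawa--Popa style product rigidity --- matches the paper's announced strategy, but the plan has a genuine gap at its center, and it is exactly the part you flag as ``the main obstacle.'' First, the discretization step does not deliver what you assert: applying the argument of \cite[Theorem 3.1]{Io11} to $\triangle(v_\lambda)=v_\lambda\otimes v_\lambda$ together with bi-exactness of $\G\times\G$ yields (after discarding the amenable branch) a \emph{single} subgroup $\Sigma<\La$ with non-amenable centralizer such that $L(\hat\G_\ell)^t\preceq_{M^t}L(\Sigma)$, i.e.\ the algebra of the product of the \emph{other} factors intertwines into $L(\Sigma)$ (Corollary \ref{int1}); it does not produce, for each $i$, a subgroup $\Sigma_i$ whose image ``essentially lives in $L(\G_i)^\omega$,'' and no splitting of the diagonal copy of $\La$ along the tensor factorization follows from it. Your step one therefore starts from a premise stronger than what the technique gives.

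Second, the passage from that single intertwining datum to an honest direct product decomposition of the \emph{group} $\La$ is the bulk of the proof, and your plan offers no mechanism for it: the sentence ``a standard argument shows the product map is an isomorphism \dots hence onto'' papers over precisely the hard part. The paper must: upgrade the intertwining to a finite index inclusion $f(L(\Sigma)\vee(L(\Sigma)'\cap M^t))f\subseteq fM^tf$; introduce the finite-orbit subgroup $\Omega=\{\la\in\La : |\mathcal O_\Sigma(\la)|<\infty\}$ and transfer finite Pimsner--Popa bases to group-level finite index $[\La:\Omega\Sigma]<\infty$ via Proposition \ref{fin-index1}; prove $\Sigma\cap\Omega$ is finite --- the \emph{only} place weak amenability is used, through \cite[Theorem 1.6]{PV12} and Vaes's approximation lemmas --- and then run several orbit-boundedness claims and use ICC of $\La$ to kill the finite commutator $[\Theta,\Omega]$, yielding only a \emph{virtual} decomposition $[\La:\Sigma_1\Sigma_2]<\infty$ (Theorem \ref{product1}). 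Removing the finite index and identifying the pieces requires Ge's theorem \cite{G96} and the partial-isometry manipulations of \cite[Proposition 12]{OP03} inside Theorem \ref{split1}, which splits off one proper subset $F$ at a time with $vL(\Phi_1)v^*=L(\G_F)^s$ on the nose; Theorem \ref{main3} then follows by induction on $n$, rather than by a one-shot appeal to unique prime decomposition applied to already-given commuting generating subgroups $\La_i$. Without an argument supplying these group-level steps (finite-index transfer, finiteness of $\Sigma\cap\Omega$, elimination of finite kernels via ICC), your proposal does not close.
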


It seems very likely that the assumption of weak amenability is unnecessary: we show that when $\G$ is a product of two groups in the class $\mathcal S_{nf}$ that this is indeed the case.

\begin{mcor}\label{main1}  Let $\G_1,\G_2\in \mathcal S_{nf}$, and denote $\G=\G_1\times \G_2$. Let  $\La$ be an arbitrary group such that $M=L(\G)=L(\La)$. Then one can find subgroups $\La_1,\La_2<\La$ with $\La_1\times \La_2=\La$, a scalar $s>0$, and a unitary $v\in M$ such that $vL(\La_1)v^*= L(\G_1)^s$ and $vL(\La_2) v^*= L(\G_2)^{1/s}$. 
\end{mcor}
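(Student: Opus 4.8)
The plan is to re-run the argument of Theorem \ref{main3} in the case $n = 2$, tracking precisely where the weak amenability hypothesis is invoked and observing that with only two factors it is not needed. Recall the three stages of the proof of Theorem \ref{main3}. First, Ioana's ``discretization'' theorem \cite[Theorem 3.1]{Io11} is applied to the commuting, generating pair $L(\G_1), L(\G_2) \subseteq M$ in order to transfer a ``large'' pair of commuting subgroups across the isomorphism $M = L(\Lambda)$: one obtains commuting subgroups $\Lambda_1, \Lambda_2 < \Lambda$ and scalars $t_1, t_2 > 0$ together with intertwining relations of the form $L(\G_i)^{t_i} \prec_M L(\Lambda_i)$. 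Second, a chain of intertwining lemmas, resting on solidity-type dichotomies, upgrades ``large'' to ``generating'': one shows $\Lambda = \Lambda_1 \times \Lambda_2$ and produces a single unitary $v \in M$ with $v L(\Lambda_i) v^* = L(\G_i)^{t_i}$. Third, the conclusion is read off from Ozawa and Popa's unique prime decomposition theorem \cite{OP03}. The first and third stages use neither weak amenability, so the whole question is confined to the second.

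Weak amenability enters the second stage of Theorem \ref{main3} only through the strong solidity results of \cite{CSU11, PV12}, which are used to control the \emph{normalizer} of a subalgebra: for $A \subseteq pMp$, either $A \prec_M L(\widehat{\G}_i)$ with $\widehat{\G}_i = \prod_{j \neq i} \G_j$, or $\mathcal{N}_{pMp}(A)''$ is amenable relative to $L(\widehat{\G}_i)$. For $n \geq 3$ such a normalizer statement appears to be genuinely necessary, since the subalgebras that arise cannot all be realized as relative commutants of diffuse subalgebras. When $n = 2$, however, every use of the strong dichotomy can be recast in terms of \emph{relative commutants}, where one can appeal instead to Ozawa's solidity of $M = L(\G_1) \bar\otimes L(\G_2)$ relative to its tensor factors: if $A \subseteq pMp$ is diffuse with $A \not\prec_M L(\G_2)$, then $A' \cap pMp$ is amenable relative to $L(\G_2)$, and symmetrically. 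This dichotomy uses only that $\G_1, \G_2$ lie in $\mathcal{S}_{nf}$ and no approximation property whatsoever; indeed it is exactly the input to \cite{OP03}. Concretely, $L(\G_1)^{t_1} \prec_M L(\Lambda_1)$ together with the composition rules for intertwining forces $L(\Lambda_1) \not\prec_M L(\G_2)$, so the relative solidity dichotomy pins down $L(\Lambda_1)' \cap M$; combining this with $L(\Lambda_2) \subseteq L(\Lambda_1)' \cap M$ (the two subgroups commute), the regularity of $L(\G_1)$ in $M$, and the symmetric analysis of $L(\Lambda_2)$, one extracts the product decomposition of $\Lambda$ and the conjugating unitary. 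Since here $M = L(\G) = L(\Lambda)$ carries no amplification parameter, one has $t_1 t_2 = 1$; setting $s = t_1$ yields the statement.

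The main obstacle is the heart of the second stage: showing that the commuting pair $\Lambda_1, \Lambda_2$ furnished by discretization actually \emph{generates} $\Lambda$, with no leftover subgroup, and that the two intertwinings can be synchronized by a single conjugating unitary. This is precisely the part of the proof of Theorem \ref{main3} that relies on solidity, and the real work is to verify that Ozawa's relative solidity of $L(\G_1) \bar\otimes L(\G_2)$ substitutes for strong solidity at each such point; the amplification bookkeeping and the final appeal to \cite{OP03} are then routine.
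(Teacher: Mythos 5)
Your overall strategy — rerun Theorem \ref{main3} for $n=2$ and check that weak amenability can be dispensed with — is the paper's strategy too, and you have correctly localized the issue: weak amenability enters only through the normalizer-type (strong solidity) theorem of \cite{PV12}, everything else (Ioana's discretization step, the intertwining/amplification bookkeeping, and the final conjugation argument in the style of \cite{OP03}) needing only membership in $\mathcal S_{nf}$. But the proposed fix is where the gap lies. The one place weak amenability is actually used is Claim \ref{9} of Theorem \ref{product1}: showing that $\Sigma\cap\Omega$ is finite, where $\Omega=\{\la\in\La: |\mathcal O_\Sigma(\la)|<\infty\}$ and $L(\Sigma)z$ sits with finite index in (an amplification of) $L(\G_1)$. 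There the subalgebra $L(\Sigma\cap\Omega)z$ is amenable and \emph{normalized} — not centralized — by the unitaries $u_\sigma z$, $\sigma\in\Sigma$, since $\Sigma\cap\Omega$ is merely normal in $\Sigma$. Your claim that ``every use of the strong dichotomy can be recast in terms of relative commutants,'' so that Ozawa-type solidity of $L(\G_1)\bar\otimes L(\G_2)$ relative to its tensor factors suffices, breaks down exactly here: a relative-commutant dichotomy tells you nothing about a diffuse amenable subalgebra whose commutant inside $L(\Sigma)z$ may be small, and no argument is offered for why the normalizer statement can be avoided. (Your description of the first stage is also off — discretization produces one subgroup $\Sigma$ with non-amenable centralizer and $L(\hat\G_\ell)^t\preceq_{M^t}L(\Sigma)$, as in Corollary \ref{int1}, not two commuting subgroups $\La_1,\La_2$ with $L(\G_i)^{t_i}\preceq L(\La_i)$ — but that is a presentational issue; the normalizer point is the substantive one.)

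What the paper actually does for $n=2$ (Theorem \ref{product2}) is not a recasting into relative commutants but a hands-on replacement of the weak amenability input: writing $\Sigma\cap\Omega=\bigcup_k R_k$ as an increasing union of normal subgroups, it first shows each $R_k$ is \emph{finite} by a Pimsner--Popa index argument (finite-index centralizers force $[L(\Sigma_k)'\cap L(\Sigma):\mathscr Z(L(\Sigma))]_{PP}<\infty$, and then Proposition \ref{fin-index1} and Corollary \ref{diffcorn} apply); it then uses the vectors $\xi_n=|R_k|^{-1/2}\sum_{a\in R_k}(u_az)\otimes(\overline{u_az})$ to verify that the conjugation action of $\{u_\sigma z:\sigma\in\Sigma\}$ on $L(\Sigma\cap\Omega)z$ is \emph{weakly compact}, and feeds this into the weakly compact embedding theorem of \cite{CS11,CSU11}, which holds for class $\mathcal S$ without any approximation property; if $\Sigma\cap\Omega$ were infinite this would force $L(\Sigma)z$ to be amenable, a contradiction. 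In the weakly amenable case weak compactness comes for free from \cite{OP07,PV12}; without it, it must be manufactured from the group structure, and this is precisely the new content your proposal is missing. Until you either supply such a weak compactness (or some other) argument handling the normalized amenable subalgebra $L(\Sigma\cap\Omega)$, the proof is incomplete at its central step.
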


These results maybe seen as the group von Neumann algebraic analog of Monod and Shalom's \cite[Theorem 1.10]{MS06} famous orbit equivalence rigidity theorem for products of groups in the class $\mathcal C_{reg}$, though the relation between the two results is imperfect and the proofs do not seem in any precise way to depend on a common framework. One salient point of contrast is the need in \cite{MS06} for a mild mixingness assumption on the target action, while in the case of the above theorem the ICC condition on the target group (corresponding to ``plain ergodicity'') suffices; however, this is likely accounted for by the fact that in orbit equivalence one is working over a parameter space on the group algebra. Secondly, Monod and Shalom are able to deduce honest isomorphism of the groups while in the above theorem the identification of the product factors up to stable isomorphism is sharp. This is because for any pair of II$_1$ factors $M$ and $N$ and any $s>0$ we have canonically that $M \bar\otimes N\cong M^{1/s}\bar\otimes N^s$; thus, using Voiculescu's scaling formula for free group factors \cite{Vo90} we have that $L(\mathbb F_2)\bar\otimes L(\mathbb F_9) \cong L(\mathbb F_2)^{1/2}\bar\otimes L(\mathbb F_9)^2\cong L(\mathbb F_5)\bar\otimes L(\mathbb F_3)$. Hence, for the class $\mathcal S_{nf}$ in general one cannot hope to further deduce (virtual) isomorphism of the factor groups. 

As a consequence of these results we may apply Margulis' Normal Subgroup Theorem \cite{Ma79, Z84} to deduce indecomposability of group factors of higher-rank irreducible lattices over a product of group factors of groups in the class $\mathcal S_{nf}$.

\begin{mcor}\label{main2} If $\La$ is an irreducible lattice in a higher rank semisimple Lie group, then $L(\La)$ is neither isomorphic to a factor $L(\G_1\times \G_2)$ where $\G_1,\G_2$ are groups in the class $\mathcal S_{nf}$, nor is it isomorphic to a factor of the form $L(\G_1\times\dotsb\times \G_n)$ where each $\G_i\in \mathcal S_{nf}$ and is weakly amenable. 

\end{mcor}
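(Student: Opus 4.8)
The plan is to deduce Corollary \ref{main2} from Corollary \ref{main1} and Theorem \ref{main3} by a standard argument combining Popa-style rigidity with Margulis' Normal Subgroup Theorem. Suppose, for contradiction, that $\La$ is an irreducible lattice in a higher-rank semisimple Lie group and $L(\La)$ is isomorphic either to $L(\G_1\times \G_2)$ with $\G_1,\G_2\in\mathcal S_{nf}$, or to $L(\G_1\times\dotsb\times\G_n)$ with each $\G_i\in\mathcal S_{nf}$ weakly amenable (and $n\geq 2$). In the first case we apply Corollary \ref{main1} with $t=1$; in the second we apply Theorem \ref{main3} with $t=1$. In either case we obtain a genuine internal direct product decomposition $\La = \La_1\times\dotsb\times\La_k$ (with $k=2$ or $k=n$) into infinite subgroups, since each $L(\La_i)$ is unitarily conjugate to an amplification of $L(\G_i)$, which is a II$_1$ factor and in particular infinite-dimensional, forcing each $\La_i$ to be infinite.

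The contradiction then comes from Margulis' Normal Subgroup Theorem: an irreducible lattice $\La$ in a higher-rank semisimple Lie group has the property that every normal subgroup is either finite (central) or of finite index. First I would pass to the quotient by the finite center so that $\La$ is ICC (note that products of ICC groups are ICC, and the hypotheses in Corollary \ref{main1} and Theorem \ref{main3} already require $L(\La)$ to be a factor, hence $\La$ is ICC, so the center is trivial). Now $\La_1 \triangleleft \La$ is an infinite normal subgroup, hence has finite index; likewise $\La_2 \times \dotsb \times \La_k \triangleleft \La$ is infinite (since $k\geq 2$), hence has finite index. But $\La_1 \cap (\La_2\times\dotsb\times\La_k)=\{e\}$ by the direct product structure, and two finite-index subgroups with trivial intersection force the group to be finite — contradicting that $\La$ is infinite. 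Hence no such decomposition can exist, proving both assertions.

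The only point requiring a little care — and the main (mild) obstacle — is the verification that Margulis' theorem applies in the precise form needed and that the reduction to the ICC/centerless case is legitimate. Concretely one should check: (i) the ambient Lie group may be taken without compact factors and with finite center after the standard reductions, so that the Normal Subgroup Theorem in the form of \cite{Ma79, Z84} applies to $\La$; (ii) a finite-index subgroup of $\La$ is again an irreducible higher-rank lattice, so passing to the centerless quotient does not destroy the hypotheses; and (iii) the subgroups $\La_i$ produced by Theorem \ref{main3} and Corollary \ref{main1} are genuinely normal in $\La$ and pairwise commuting with trivial pairwise intersections, which is exactly the content of an internal direct product decomposition $\La=\La_1\times\dotsb\times\La_k$. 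Granting these, the argument above is complete; all the substantive work has already been carried out in establishing Theorem \ref{main3} and Corollary \ref{main1}.
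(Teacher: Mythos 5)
Your argument is correct and is essentially the paper's own proof: the paper also obtains the internal decomposition $\La=\La_1\times\dotsb\times\La_k$ from Theorem \ref{main3} (resp.\ Corollary \ref{main1}) and then contradicts Margulis' Normal Subgroup Theorem, since an irreducible higher-rank lattice (ICC because $L(\La)$ is a II$_1$ factor) admits no infinite normal subgroup of infinite index, hence no direct product decomposition into infinite groups. Your finite-index/trivial-intersection phrasing and the standard reductions you flag (no compact factors, centerless quotient) are just explicit versions of the same argument.
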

 In particular if $\La = \PSL_2(\mathbb Z[\sqrt{2}])$, then $L(\La)$ is not isomorphic to $L(\mathbb F_2\times \mathbb F_2)$, even though these groups are measure equivalent in the sense of \cite{Fu99a}. These add new natural examples to the ones found earlier \cite{CI11}.
 
As a last remark we believe that the arguments below actually give a slightly stronger version of Theorem \ref{main3}, though we will not pursue this here to avoid additional complexities in the presentation. To introduce a bit of notation, we say that two finite von Neumann algebras $M$ and $N$ are \emph{virtually stably isomorphic} if there exist $s,t>0$ and finite index subalgebras $\mathcal M\subset M^s$ and $\mathcal N\subset N^t$ so that $\mathcal M$ and $\mathcal N$ are $\ast$-isomorphic. With this notation in hand, it should be the case that if $\G = \G_1\times\dotsb\times\G_n$ with all $\G_i$ weakly amenable and belonging to the class $\mathcal S_{nf}$, then for any discrete group $\La$, $L(\La)$ is virtually stably isomorphic to $L(\G)$ if and only if there is a finite index subgroup $\La_0< \La$ and a finite normal subgroup $N\lhd \La_0$ so that $\La_0/N\cong \La_1\times\dotsb\times\La_n$ with $L(\La_i)$ stably isomorphic to $L(\G_i)$, for all $1\leq i\leq n$.
\subsection{Notations} Given a von Neumann algebra 
$M$ we will denote by $\mathscr U(M)$ its unitary group, by $\mathscr P(M)$ the set of all its nonzero projections and by $\mathscr I (M)$ the set of all its nonzero partial isometries. Also we denote by $M_+$ the set of all positive elements and $M^h$ is the set of all selfadjoint elements. All von Neumann algebras inclusions $N\subseteq M$ are assumed unital unless otherwise specified. For any von Neumann subalgebras $P,Q\subseteq M$  we denote by $P\vee Q$ the von Neumann algebra they generate in $M$.

All von Neumann algebras $M$ considered in this article will be tracial, i.e., endowed with a unital, faithful, normal functional $\tau:M\ra \mathbb C$  satisfying $\tau(xy)=\tau(yx)$ for all $x,y\in M$. 

For a countable group $\G$ we denote by $\{ u_\g \,:\, \g\in \G\} \in U(\ell^2(\G))$ its left regular representation given by $u_\g(\delta_\la ) = \delta_{\g\la}$, where $\delta_\la:\G\ra \mathbb C$ is the Dirac mass at $\la$. The weak operatorial closure of the linear span of $\{ u_\g \,:\, \g\in \G\}$ in $B(\ell^2(\G))$ is the so called group von Neumann algebra and will be denoted by $L(\G)$. $L(\G)$ is a II$_1$ factor precisely when $\G$ has infinite non-trivial conjugacy classes (ICC).

Given a group $\G$ and a subset $F\subseteq \G$ we will be denoting by $\langle F\rangle$ the subgroup of $\G$ generated by $F$.

\noindent {\bf Acknowledgements.} We would like to thank R\'emi Boutonnet and Adrian Ioana for useful comments on the manuscript.

\section{Preliminaries}


\subsection{Popa's Intertwining Techniques} More than a decade ago S. Popa introduced  in \cite [Theorem 2.1 and Corollary 2.3]{Po03} a powerful criterion for identifying intertwiners between arbitrary subalgebras of tracial von Neumann algebras, now termed \emph{Popa's intertwining-by-bimodules techniques}.

\begin {theorem}\cite{Po03} \label{corner} Let $(M,\tau)$ be a separable tracial von Neumann algebra and let $P, Q\subseteq M$ be (not necessarily unital) von Neumann subalgebras. 
Then the following are equivalent:
\begin{enumerate}
\item There exist  nonzero projections $ p\in  P, q\in  Q$, a $\ast$-homomorphism $\theta:p Pp\rightarrow qQq$  and a nonzero partial isometry $v\in q Mp$ such that $\theta(x)v=vx$, for all $x\in pPp$.
\item For any group $\mathcal G\subset \mathscr U(P)$ such that $\mathcal G''= P$ there is no sequence $(u_n)_n\subset \mathcal G$ satisfying $\|E_{ Q}(xu_ny)\|_2\rightarrow 0$, for all $x,y\in  M$.
\end{enumerate}
\end{theorem}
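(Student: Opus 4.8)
The plan is to read conditions (1) and (2) through the Jones basic construction of the inclusion $Q\subseteq M$. Write $e_Q$ for the orthogonal projection of $L^2(M,\tau)$ onto $\overline{L^2(Q)}$, let $\langle M,e_Q\rangle:=(JQJ)'\subseteq B(L^2(M))$ denote the basic construction (with $J$ the canonical conjugation), and equip it with its canonical faithful normal semifinite trace $\widehat\tau$, characterized by $\widehat\tau(a e_Q b)=\tau(ab)$ for $a,b\in M$. One identity underlies everything: since $\widehat\tau(e_Q)=\tau(1)=1<\infty$, the Jones projection lies in $L^2(\langle M,e_Q\rangle,\widehat\tau)$, and the pull-down formula $e_Q\,m\,e_Q=E_Q(m)e_Q$ for $m\in M$ gives $\langle a e_Q b,\,c e_Q d\rangle_{\widehat\tau}=\tau\bigl(d^*E_Q(c^*a)b\bigr)$ for all $a,b,c,d\in M$. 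I will show that (1) and (2) are each equivalent to the intermediate statement $(\star)$: \emph{the relative commutant $P'\cap\langle M,e_Q\rangle$ contains a nonzero projection $f$ with $\widehat\tau(f)<\infty$}; equivalently, since $\widehat\tau$ computes right $Q$-dimension, $L^2(M)$ contains a nonzero $P$--$Q$-subbimodule that is finitely generated as a right Hilbert $Q$-module.

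First I would prove $(\star)\Rightarrow(2)$. Assuming $(\star)$, fix such an $f$, and approximate it in $\|\cdot\|_{2,\widehat\tau}$ by a finite sum $T=\sum_i x_i e_Q y_i$ ($x_i,y_i\in M$). Since $f$ is fixed by conjugation by every $u\in\mathscr U(P)$ (as $f\in P'$), the pairing $\langle uTu^*,T\rangle_{\widehat\tau}$ stays bounded away from $0$ uniformly in $u$; expanding this pairing with the displayed identity and applying Cauchy--Schwarz yields a finite set $F\subseteq M$ and $\kappa>0$ with $\sum_{x,y\in F}\|E_Q(xuy)\|_2^2\geq\kappa$ for every $u\in\mathscr U(P)$. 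Any sequence witnessing the negation of (2), for any generating group $\mathcal G\subseteq\mathscr U(P)$, would violate this bound, so (2) holds. Conversely I would prove $(2)\Rightarrow(\star)$ by contraposition. If $(\star)$ fails, then for every finite set $F\subseteq M$ the $\|\cdot\|_{2,\widehat\tau}$-closed convex hull $\mathcal K_F$ of $\{u\eta_Fu^*:u\in\mathscr U(P)\}$, where $\eta_F:=\sum_{x\in F}x e_Q x^*$ (so $\widehat\tau(\eta_F)<\infty$), must contain $0$: otherwise its unique element $\xi$ of minimal $\|\cdot\|_{2,\widehat\tau}$-norm would be nonzero and, by uniqueness, fixed by $\mathscr U(P)$-conjugation (an $\widehat\tau$-isometry, $\widehat\tau$ being a trace), hence $\xi\in P'\cap\langle M,e_Q\rangle$, $\xi\geq 0$, $\widehat\tau(\xi)\leq\widehat\tau(\eta_F)<\infty$ (by $\sigma$-weak lower semicontinuity of $\widehat\tau$ on bounded sets), and a nonzero spectral projection of $\xi$ for an interval bounded away from $0$ would contradict the failure of $(\star)$. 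Now, $0\in\mathcal K_F$ for all $F$: expanding $\bigl\|\sum_i\alpha_i u_i\eta_F u_i^*\bigr\|_{2,\widehat\tau}^2$ via the displayed identity and using that $\mathscr U(P)$ is a group, one extracts, for each $F$ and each $\varepsilon>0$, an element $w\in\mathscr U(P)$ with $\sum_{x,y\in F}\|E_Q(x^*wy)\|_2^2<\varepsilon$; diagonalizing over an increasing family of finite sets exhausting a $\|\cdot\|_2$-dense countable subset of the unit ball of $M$ (using separability), and rescaling, one gets a sequence $(u_n)\subseteq\mathscr U(P)$ with $\|E_Q(x u_n y)\|_2\to 0$ for all $x,y\in M$, so (2) fails. (That the bound in the forward direction is independent of $\mathcal G$ is exactly why (2) may be quantified over all generating $\mathcal G$ at once.)

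It remains to prove $(\star)\Leftrightarrow(1)$, through the module picture. Given $f$ as in $(\star)$, the subspace $\mathcal H:=fL^2(M)$ is a left $P$-submodule (as $f\in P'$) and a right $Q$-submodule (as $f\in(JQJ)'$), and $\widehat\tau(f)<\infty$ forces its right $Q$-dimension to be finite, hence $\mathcal H$ finitely generated over $Q$. So $\mathcal H$ embeds, right-$Q$-linearly, into $e\bigl(L^2(Q)\otimes\mathbb C^k\bigr)=e L^2(M_k(Q))$ for some $k$ and some projection $e\in M_k(Q)$, and transporting the left $P$-action across this embedding produces a normal unital $*$-homomorphism $\pi\colon P\to e M_k(Q)e$. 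Reducing by a suitable nonzero projection of $P$ and a minimal projection of the amplification, and unravelling the bimodule identifications against the cyclic vector $\widehat 1\in L^2(M)$, one extracts $p\in\mathscr P(P)$, $q\in\mathscr P(Q)$, a $*$-homomorphism $\theta\colon pPp\to qQq$, and a nonzero partial isometry $v\in qMp$ with $\theta(x)v=vx$ for all $x\in pPp$ --- that is, (1). Conversely, given $\theta,v$ as in (1), the relation $\theta(x)v=vx$ is equivalent to $xv^*=v^*\theta(x)$ for $x\in pPp$; one forms the induced bimodule $L^2(P)p\otimes_{pPp}qL^2(Q)$, with $pPp$ acting on $qL^2(Q)$ through $\theta$, which is finitely generated over $Q$, and maps it $P$--$Q$-equivariantly into $L^2(M)$ by $\widehat{ap}\otimes\widehat b\mapsto\widehat{ap v^* b}$ --- well-definedness being exactly $xv^*=v^*\theta(x)$. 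The closed range $\overline{Pv^*Q}\subseteq L^2(M)$ is then a nonzero $P$--$Q$-subbimodule, still finitely generated over $Q$, so its orthogonal projection lies in $P'\cap\langle M,e_Q\rangle$ and has finite $\widehat\tau$-trace, giving $(\star)$.

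The main obstacle is the passage $(\star)\leftrightarrow(1)$: turning an abstract finite-$\widehat\tau$-trace $P$-central projection in $\langle M,e_Q\rangle$ into the concrete corner-to-corner intertwiner $(p,q,\theta,v)$, and back. This rests on the structure theory of finitely generated right Hilbert modules over a tracial von Neumann algebra --- existence of a Pimsner--Popa basis, the isometric embedding into $L^2(Q)^{\oplus k}$, and the identity relating $\widehat\tau(f)$ to the dimension of the corresponding right module --- together with the bookkeeping needed to descend from an amplification $M_k(Q)$ back to an honest corner $qQq$ of $Q$. That bookkeeping is complicated by the fact that the support projections $v^*v\in(pPp)'\cap pMp$ and $vv^*$ need not lie in $P$ or in $Q$, so that the equivariant map above is generally not isometric and only its closed range, not the whole induced bimodule, is visible inside $L^2(M)$. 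A secondary, more routine, technical point --- in the $(\star)\leftrightarrow(2)$ steps --- is the extraction of a single sequence $(u_n)$ that witnesses the failure of (2) simultaneously for all $x,y\in M$; this is precisely where separability of $M$ is used.
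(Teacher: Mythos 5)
The statement you are proving is not proved in this paper at all: it is quoted verbatim from Popa's work \cite{Po03} as background, so there is no in-paper argument to compare against. Your reconstruction is, in outline, exactly Popa's original proof: the intermediate condition $(\star)$ (a nonzero finite-$\widehat\tau$-trace projection in $P'\cap\langle M,e_Q\rangle$, equivalently a nonzero $P$--$Q$-subbimodule of $L^2(M)$ of finite right $Q$-dimension), the averaging/minimal-norm convexity argument with $\eta_F=\sum_{x\in F}xe_Qx^*$ for $(2)\Leftrightarrow(\star)$, and the finite-module analysis for $(\star)\Leftrightarrow(1)$ are all the standard route, and your handling of the quantifier over generating groups $\mathcal G$ and of separability is correct.

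One recurring imprecision should be repaired, though it does not change the architecture: finiteness of the right $Q$-dimension does \emph{not} imply finite generation when the relevant algebra has diffuse center. Concretely, $\widehat\tau(f)<\infty$ only bounds the integral of the multiplicity function of $fL^2(M)$ over $\mathscr Z(Q)$, not its supremum, so the asserted embedding of $\mathcal H$ into $e(L^2(Q)\otimes\mathbb C^k)$ fails in general (e.g.\ $Q=L^\infty[0,1]$ and $\mathcal H=\oplus_i 1_{[0,2^{-i}]}L^2(Q)$); one must first cut $\mathcal H$ on the right by a central projection $z_k\in\mathscr Z(Q)$ on which the multiplicity is at most $k$ (such a nonzero $z_k$ exists precisely because the dimension is finite), noting that $\mathcal Hz_k$ is still a $P$--$Q$-subbimodule. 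The same issue appears in the converse step: the induced bimodule $L^2(P)p\otimes_{pPp}qL^2(Q)$, and hence the closure of $Pv^*Q$, need not be finitely generated over $Q$ --- it has finite right $Q$-dimension only after cutting by a suitable central projection of $P$ (controlling $\dim_{pPp}(L^2(P)p)$), after which the image argument you describe goes through. With these routine central-cutting adjustments your proof is the standard, correct one.
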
 

If one of the two equivalent conditions from Theorem \ref{corner} holds then we say that \emph{ a corner of $P$ embeds into $Q$ inside $M$}, and write $P\preceq_{M}Q$. If we moreover have that $Pp'\preceq_{M}Q$, for any nonzero projection  $p'\in P'\cap 1_PM1_P$, then we write $P\preceq_{M}^{s}Q$.

\subsection{Finite Index Inclusions of von Neumann Algebras} If $P\subseteq M$ are II$_1$ factors, then the {\it Jones index} of the inclusion $P\subseteq M$, denoted  $[M:P]$, is  the dimension of $L^2(M)$ as a left  $P$-module. For various basic properties of finite index inclusions of factors we refer the reader to the groundbreaking work of V.F.R. Jones, \cite{Jo81}. Subsequently, there were several generalizations of the finite index notion for an inclusion of arbitrary von Neumann algebras. For instance, M. Pimsner and S. Popa  discovered a ``probabilistic'' notion of index for an inclusion $P\subseteq M$ of arbitrary von Neumann algebras with conditional expectation, which  the case of inclusions of II$_1$ factors coincides with Jones' index, \cite[Theorem 2.2]{PP86}.  

\begin{definition}\cite{PP86} \label{index} For an inclusion $P\subseteq M$ of tracial von Neumann algebras  define  $$\lambda=\inf\;\{\|E_P(x)\|_2^2 \|x\|_2^{-2} \;:\; x\in M_{+}, \; x\not=0\}.$$
The {\it probabilistic index of $P\subseteq M$} is defined as $[M:P]_{PP}=\lambda^{-1}$, with the convention $\frac{1}{0}=\infty$.
\end{definition}

For further use we collect together some basic results from \cite{Jo81,PP86}. 

\begin{theorem}  \cite{Jo81,PP86}\label{basic} Let $P\subseteq M$ be an inclusion of tracial von Neumann algebras. Then the following hold:
\begin{enumerate}
\item \label{same} If $P\subseteq M$ are II$_1$ factors then  $[M:P]_{PP}=[M:P]$;
\item If $[M:P]_{PP}<\infty$ and $p\in P$ is a projection then $[pMp:pPp]_{PP}<\infty$;
\item \label{fdim} If P is II$_1$ factor and $[M:P]_{PP}<\infty$ then $P'\cap M$ is finite dimensional;
\item If $P\subseteq M$ and $Q\subseteq R$ are II$_1$ factors then $[M:P][R:Q]=[M\bar\otimes R: P\bar\otimes Q]$;
\item If $P\subseteq M$ are II$_1$ factors with $[M:P]<\infty$ then there exists a finite Pimsner-Popa basis for the inclusion $P\subseteq M$. 
\item If $P\subseteq M$ are II$_1$ factors then $\dim_{\mathbb C}(P'\cap M)\leq [M:P]$. 
\end{enumerate}
\end{theorem}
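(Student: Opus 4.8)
The plan is to isolate items (1) and (5) as the only genuinely substantive facts, quoted from \cite{Jo81} and \cite{PP86}, and to deduce the other four items from them by short formal arguments. For item (1) I would cite \cite[Theorem~2.2]{PP86} directly: for an inclusion of II$_1$ factors the infimum $\lambda$ in Definition~\ref{index} is attained and equals $\mathrm{Tr}(e_P)$, the trace of the Jones projection $e_P$ in the basic construction $\langle M,e_P\rangle$ normalized so that $\mathrm{Tr}|_M=\tau$; by Jones' coupling-constant computation $\mathrm{Tr}(e_P)^{-1}=\dim_P L^2(M)$, so $[M:P]_{PP}=\lambda^{-1}=[M:P]$. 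Item (5) is Jones' construction, recast in \cite{PP86}, of a finite orthonormal (Pimsner--Popa) basis $m_1,\dots,m_k\in M$ with $\sum_i m_i e_P m_i^{*}=1$ for a finite-index inclusion of II$_1$ factors; nothing need be added there.

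For item (2): since $p\in P$ we have $E_P(pMp)\subseteq pPp$, so $E_P$ restricts to the trace-preserving conditional expectation $pMp\to pPp$ for the normalized trace $\tau_p=\tau(p)^{-1}\tau|_{pMp}$. Because the ratio $\|E_P(x)\|_2^2\|x\|_2^{-2}$ is invariant under rescaling the trace, its infimum over $(pMp)_+\subseteq M_+$ is bounded below by $[M:P]_{PP}^{-1}$, so $[pMp:pPp]_{PP}\leq[M:P]_{PP}<\infty$. For item (4): the Jones index of a II$_1$ factor inclusion is the $P$-dimension of $L^2(M)$, von Neumann dimension is multiplicative under spatial tensor products (the Murray--von Neumann coupling constants multiply), and $L^2(M\bar\otimes R)\cong L^2(M)\bar\otimes L^2(R)$ as a $P\bar\otimes Q$-module; hence $[M\bar\otimes R:P\bar\otimes Q]=\dim_P L^2(M)\cdot\dim_Q L^2(R)=[M:P][R:Q]$.

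Items (3) and (6) need a brief argument. If $P$ is a factor then $E_P(P'\cap M)\subseteq P'\cap P=\mathbb C1$, so $E_P(x)=\tau(x)1$ for every $x\in P'\cap M$; applying the Pimsner--Popa inequality $\|E_P(x)\|_2^2\geq[M:P]_{PP}^{-1}\|x\|_2^2$ (valid for $x\in M_+$ by Definition~\ref{index}) to a nonzero projection $e\in P'\cap M$ gives $\tau(e)^2\geq[M:P]_{PP}^{-1}\tau(e)$, i.e.\ $\tau(e)\geq[M:P]_{PP}^{-1}$. Thus every orthogonal family of nonzero projections in $P'\cap M$ is finite, forcing $P'\cap M$ to be finite-dimensional, which is (3). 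For the sharp estimate (6), assume in addition that $M$ is a factor, so $[M:P]_{PP}=[M:P]$ by item (1). Since $P$ is a factor commuting with the finite-dimensional algebra $A=P'\cap M$, the multiplication map induces a normal isomorphism $P\vee A\cong P\bar\otimes A$ (its kernel is a weakly closed two-sided ideal of $\bigoplus_j M_{n_j}(P)$ which is trivial on $A$, hence zero); viewing $L^2(P\bar\otimes A)\cong L^2(P)\otimes_{\mathbb C}H_A$, with $H_A$ the underlying Hilbert space of $A$, as a left $P$-module shows $\dim_P L^2(P\vee A)=\dim_{\mathbb C}H_A=\dim_{\mathbb C}A$, so $[M:P]=\dim_P L^2(M)\geq\dim_P L^2(P\vee A)=\dim_{\mathbb C}(P'\cap M)$.

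I do not expect a real obstacle here: the statement is a compendium, and essentially all the work sits inside the two quoted theorems of Jones and Pimsner--Popa, in particular inside the identification in item (1) of the probabilistic index with the Jones index via the basic construction. The only points requiring any care are the appeal to multiplicativity of coupling constants in (4) and (6), and the isomorphism $P\vee A\cong P\bar\otimes A$ for $P$ a factor commuting with $A$, both of which are standard.
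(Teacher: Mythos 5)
Your proposal is correct, but note that the paper itself offers no proof of this statement: Theorem \ref{basic} is presented purely as a compendium quoted from \cite{Jo81,PP86}, so the "paper's route" is citation, whereas you reorganize the six items around two quoted inputs --- the identification $[M:P]_{PP}=[M:P]$ for II$_1$ factors and the existence of a finite Pimsner--Popa basis --- and derive (2), (3), (4), (6) by hand. Your deductions check out: for (2) the key point, which you state correctly, is that $E_P|_{pMp}$ is $pPp$-bimodular, unital and $\tau_p$-preserving, hence \emph{is} the trace-preserving expectation onto $pPp$, and the Pimsner--Popa ratio is unchanged by the renormalization $\tau\mapsto\tau(p)^{-1}\tau$, giving $[pMp:pPp]_{PP}\leq[M:P]_{PP}$; for (3) the bound $\tau(e)\geq[M:P]_{PP}^{-1}$ for nonzero projections $e\in P'\cap M$ (using $E_P(x)=\tau(x)1$, valid since $P$ is a factor) caps the size of orthogonal families of projections and so forces finite-dimensionality; for (6) your tensor-product argument works, and in fact is robust to the fact that $\tau|_{P\vee A}$ need not a priori be the product trace --- writing $A\cong\bigoplus_j M_{n_j}(\mathbb C)$, any faithful normal trace on $P\bar\otimes A$ gives $\dim_P L^2(P\vee A)=\sum_j n_j^2=\dim_{\mathbb C}A$, so the $P$-module comparison inside $L^2(M)$ yields $\dim_{\mathbb C}(P'\cap M)\leq\dim_P L^2(M)=[M:P]$ (alternatively, as you note, $\tau(xa)=\tau(x)\tau(a)$ follows from $E_P(a)=\tau(a)1$). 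The classical treatments obtain (6) via the basic construction and (3) via Pimsner--Popa's estimates, so what your route buys is a short, self-contained reduction of the whole list to the two genuinely substantive quoted theorems; what the citation route buys is, of course, brevity and the sharper structural statements (e.g.\ the local index formula) that the paper also uses elsewhere.
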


In the remaining part of the section we prove several technical results which will be used in the sequel. Some of them are probably well-known, but as we are unable to find the proofs in the literature, we will include them for the reader's convenience.

\begin{lem} \label{decay1'} Let $(M, \tau)$ be a finite von Neumann algebra together with a projection $e\in M$ and a subset $S\subseteq \mathscr U(M)$. Given $\ve>0$, there exists $\eta> 0$ so that if there exists a function $\phi: S\ra \mathbb R_+$ satisfying the following properties:
\begin{enumerate}
 \item \label{0.1.1'}  $\tau(exex^*)\leq \eta +\phi(x)$, for all $x\in S$;
 \item \label{0.1.2'}  for every $\delta>0$ and every finite set $F\subset S$ there exists $u\in S$ such that $\phi (u^*y)\leq \delta$ for all $y\in F$.
 \end{enumerate}
then $\tau(e)\leq \ve$.
\end{lem}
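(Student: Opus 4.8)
\section*{Proof strategy for Lemma \ref{decay1'}}

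The plan is to argue by contradiction. Choosing $\eta$ depending only on $\ve$ --- concretely $\eta:=\ve^2/4$ will do --- suppose $\phi$ satisfies \eqref{0.1.1'} and \eqref{0.1.2'} but, contrary to the conclusion, $c:=\tau(e)>\ve$. The idea is to use hypothesis \eqref{0.1.2'} to manufacture arbitrarily many pairwise \emph{almost orthogonal} conjugates $x e x^*$ of $e$, and then invoke the elementary fact that a tracial von Neumann algebra cannot contain more than $O(1/\ve^2)$ projections of trace $>\ve$ that are pairwise almost orthogonal.

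First I would build a sequence $x_1,x_2,x_3,\dots\in S$ recursively, using \eqref{0.1.2'} with $\delta:=\ve^2/4$: given $x_1,\dots,x_{m-1}$, apply \eqref{0.1.2'} to the finite set $F=\{x_1,\dots,x_{m-1}\}\subset S$ to obtain $u\in S$ with $\phi(u^*x_i)\le\delta$ for all $i<m$ --- note that this forces each $u^*x_i$ to lie in $S$, the domain of $\phi$ --- and set $x_m:=u$. Writing $f_i:=x_ie x_i^*\in\mathscr P(M)$, the cyclicity of $\tau$ gives $\tau(f_mf_i)=\tau\bigl(e(x_m^*x_i)e(x_m^*x_i)^*\bigr)$, so hypothesis \eqref{0.1.1'} applied to the unitary $x_m^*x_i=u^*x_i\in S$ yields $\tau(f_mf_i)\le\eta+\phi(u^*x_i)\le\eta+\delta=\ve^2/2$ for every $i<m$. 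Since at stage $m$ it suffices to control the pairs involving the newest index (the earlier ones were handled at earlier stages), after $k$ steps we have produced projections $f_1,\dots,f_k$, each of trace $c$, with $\tau(f_if_j)\le\ve^2/2$ whenever $i\neq j$; and because \eqref{0.1.2'} is assumed for \emph{every} finite subset of $S$, the recursion never stalls, so $k$ may be taken arbitrarily large.

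The second ingredient is the ``packing'' estimate. In $L^2(M,\tau)$ put $g_i:=f_i-c\cdot 1$; then $g_i\perp 1$, $\|g_i\|_2^2=c-c^2$, and $\langle g_i,g_j\rangle=\tau(f_if_j)-c^2\le \ve^2/2-c^2$ for $i\neq j$. Expanding $0\le\bigl\|\sum_{i=1}^k g_i\bigr\|_2^2$ and bounding the $k(k-1)$ cross terms gives $0\le k(c-c^2)+k(k-1)(\ve^2/2-c^2)$, hence $(k-1)(c^2-\ve^2/2)\le c(1-c)\le 1/4$. Since $c>\ve$ we have $c^2-\ve^2/2>\ve^2/2>0$, so $k\le 1+1/(2\ve^2)$ regardless of how large $k$ was chosen --- a contradiction. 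Therefore $\tau(e)\le\ve$.

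I do not anticipate a genuine obstacle here: the argument is entirely bookkeeping plus an elementary two-norm computation. The only points that need a moment's care are (a) that \eqref{0.1.2'} guarantees $u^*x_i\in S$, which is precisely what makes \eqref{0.1.1'} applicable to that element; (b) that one only needs to control the new pairs at each stage; and (c) that the almost-orthogonality constant $\ve^2/2$ is strictly below $c^2$, which is what keeps $c^2-\ve^2/2$ bounded away from $0$ in the final estimate.
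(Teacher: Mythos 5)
Your proposal is correct and follows essentially the same route as the paper: both use hypothesis (\ref{0.1.2'}) inductively to produce, for arbitrary $k$, elements of $S$ whose conjugates of $e$ are pairwise almost orthogonal in trace (via (\ref{0.1.1'}) applied to $u^*x_i\in S$), and then conclude with an elementary second-moment estimate in $L^2(M,\tau)$. The only difference is cosmetic: you center the projections and argue by contradiction with $\eta=\ve^2/4$, while the paper applies Cauchy--Schwarz directly to $\tau(\sum_i e_i)$ with $\eta=\ve^2/2$ and solves the resulting quadratic inequality to get $\tau(e)\leq\sqrt{2\eta}$ without contradiction.
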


The following concise proof, which also produces a much better effective constant $\eta$ than we had originally obtained, was shown to us by R\'emi Boutonnet.

\begin{proof} Fix $\ve>0$, and let $\eta = \ve^2/2$. Using property (\ref{0.1.2'}) inductively for $k\in \mathbb N$ arbitrary we can find elements $\sigma _1,\sigma_2,\ldots, \sigma_k \subset S$ so that that if $e_i= \sigma_i e {\sigma_i}^*$ then 
\begin{equation}\label{8.5.0}\tau(e_ie_j)\leq 2\eta
\end{equation} for all $i\not= j$.

It thus follows from the Cauchy-Schwartz inequality that 
\begin{equation} (k\tau(e))^2 = \left(\tau(\sum_i e_i)\right)^2\leq \sum_{i.j} \tau(e_ie_j) = k\tau(e) + 2k^2\eta
\end{equation}
Setting $X = k\tau$, one has the quadratic inequality $X^2 \leq X + 2k^2\eta$, solving which leads to 
\begin{equation} \tau(e)\leq \frac{1 + \sqrt{1 + 8k^2\eta}}{4k}.
\end{equation}
As $k$ was arbitrary this shows that $\tau(e)\leq \sqrt{2\eta} = \ve$. 

\end{proof}

\begin{cor}\label{decay1}Let $(M, \tau)$ be a finite von Neumann algebra together with a projection $e\in M$ and a subset $S\subseteq \mathscr U(M)$. Assume for every $\ve>0$ there exists a function $\phi_\ve: S\ra \mathbb R_+$ satisfying the following properties:
\begin{enumerate}
 \item \label{0.1.1}  $\tau(exex^*)\leq \ve +\phi_\ve(x)$, for all $x\in S$;
 \item \label{0.1.2}  for every $\delta>0$ and every finite set $F\subset S$ there exists $u\in S$ such that $\phi_\ve (u^*y)\leq \delta$ for all $y\in F$.
 \end{enumerate}
Then $e=0$.
\end{cor}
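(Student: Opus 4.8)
The plan is to deduce Corollary \ref{decay1} from Lemma \ref{decay1'} by a straightforward limiting argument in $\ve$. First I would argue by contradiction: suppose $e\neq 0$, so that $\tau(e)=c>0$. The idea is to show that the hypothesis of Corollary \ref{decay1} forces $\tau(e)\leq \ve$ for \emph{every} $\ve>0$, which contradicts $\tau(e)=c>0$ once we take $\ve<c$.

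So fix an arbitrary $\ve>0$. Apply Lemma \ref{decay1'} with this $\ve$: it produces an $\eta=\eta(\ve)>0$ (in fact $\eta=\ve^2/2$, though we only need existence) with the stated implication. Now I would invoke the hypothesis of Corollary \ref{decay1} with the parameter $\eta$ in place of $\ve$ — that is, there exists a function $\phi_\eta\colon S\to\R_+$ satisfying
\begin{enumerate}
\item $\tau(exex^*)\leq \eta+\phi_\eta(x)$ for all $x\in S$;
\item for every $\delta>0$ and every finite $F\subset S$ there exists $u\in S$ with $\phi_\eta(u^*y)\leq\delta$ for all $y\in F$.
\end{enumerate}
These are precisely conditions (\ref{0.1.1'}) and (\ref{0.1.2'}) of Lemma \ref{decay1'} for the chosen $\ve$ and its associated $\eta$, with $\phi\coloneq\phi_\eta$. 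Hence the lemma applies and yields $\tau(e)\leq\ve$.

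Since $\ve>0$ was arbitrary, $\tau(e)\leq\ve$ for all $\ve>0$, so $\tau(e)=0$, and by faithfulness of $\tau$ we conclude $e=0$, completing the proof. I do not anticipate a genuine obstacle here; the only subtlety is purely bookkeeping, namely being careful that the role of ``$\ve$'' in the hypothesis of the corollary is played by the constant $\eta$ output by the lemma (so one feeds $\phi_\eta$, not $\phi_\ve$, into the lemma), and noting that $\eta$ depends only on $\ve$ and not on $e$ or $S$, so that the quantifiers line up correctly.
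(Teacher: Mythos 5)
Your proof is correct and is essentially the paper's own argument: for each $\ve>0$ one feeds the lemma (with its constant $\eta$) the function $\phi_\eta$ supplied by the hypothesis, concluding $\tau(e)\leq\ve$ for all $\ve$, hence $e=0$ by faithfulness of $\tau$. Your extra care about the quantifier bookkeeping (using $\phi_\eta$ rather than $\phi_\ve$) is exactly the point the paper leaves implicit.
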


\begin{proof}
Applying the previous lemma, for every $\ve>0$ we have $\tau(e)\leq\ve$; thus $e=0$. 
\end{proof}

From part (2) in Theorem \ref{basic} finiteness of the index of an inclusion of algebras is preserved under taking corners. Next we establish the converse for certain inclusions of group von Neumann algebras. 

\begin{prop}\label{fin-index1} Let $\Omega\leqslant\Lambda\leqslant \Theta$ be groups.  If there are  $p\in \mathscr P(L(\Omega)), z\in \mathscr P(L(\La)'\cap L(\Theta))$ so that $pz\neq 0$ and  $pL(\Omega) pz\subseteq pL(\Lambda) pz$ admits a finite Pimsner-Popa basis then $[\La :\Omega]<\infty$.   
\end{prop}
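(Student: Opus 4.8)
The plan is to reduce the statement to a basis-counting argument carried out in a single corner of the ambient algebra, exploiting the fact that cosets of $\Omega$ in $\La$ give rise to orthogonal subspaces inside $L^2(L(\La))$ and that a finite Pimsner–Popa basis forces only finitely many of them to survive after compression.

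First I would replace $p$ by a subprojection if necessary so that $pzp$ has a convenient form; more precisely, since $z\in L(\La)'\cap L(\Theta)$ commutes with everything in $L(\La)\supseteq L(\Omega)$, the product $pz$ is a projection in $L(\Omega)\vee\C z$, and the inclusion $pL(\Omega)pz\subseteq pL(\La)pz$ is really an inclusion of the amplified/cut-down algebras $L(\Omega)z \subseteq L(\La)z$ cut by $p$. I would pick a set $\{r_i : i\in I\}$ of representatives for the left cosets of $\Omega$ in $\La$, so that $\{u_{r_i} : i\in I\}$ is a basis (in the Hilbert-module sense) of $L^2(L(\La))$ over $L(\Omega)$: one has $E_{L(\Omega)}(u_{r_i}^* u_{r_j}) = \delta_{ij}$ and $\sum_i u_{r_i} E_{L(\Omega)}(u_{r_i}^* \xi) = \xi$ for $\xi\in L^2(L(\La))$. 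The claim $[\La:\Omega]<\infty$ is precisely the statement that $I$ is finite.

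The heart of the argument: suppose a finite Pimsner–Popa basis $\{\eta_1,\dots,\eta_m\}$ exists for $pL(\Omega)pz\subseteq pL(\La)pz$, meaning every $\xi$ in the upper algebra can be written $\xi = \sum_{k=1}^m \eta_k\, E_{pL(\Omega)pz}(\eta_k^* \xi)$ with control on $\ell^2$ norms. Apply this to $\xi = p u_{r_i} z$ for each $i$. On one hand the elements $\{p u_{r_i} z\}_i$ are ``almost orthogonal'' in $L^2(pL(\Theta)pz)$ in the sense that their pairwise inner products, computed via the trace, involve $E_{L(\Omega)}(u_{r_i}^* p\, u_{r_j})$ — and here I would need the key reduction that $p\in L(\Omega)$ commutes with $u_{r_i}^{-1}\Omega u_{r_i}$-type issues do not arise because $p\in L(\Omega)$, so $E_{L(\Omega)}(u_{r_i}^* p u_{r_j})$ simplifies (using that conjugation and the conditional expectation interact well). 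On the other hand, expressing each $p u_{r_i} z$ in the finite basis $\{\eta_k\}$ and using the Pimsner–Popa orthogonality relations for the coefficients $E_{pL(\Omega)pz}(\eta_k^* p u_{r_i} z)$, one bounds $\sum_i \tau(p u_{r_i} z u_{r_i}^* p) = \sum_i \tau(p z)\,$ (roughly $|I|\tau(pz)$ up to the coset normalization) by a quantity of the form $\sum_{k=1}^m \|\eta_k\|_2^2 \cdot (\text{uniform bound})$, which is finite and independent of how many $i$ we include. Since $\tau(pz)>0$ because $pz\neq 0$, this forces $|I|<\infty$.

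The main obstacle I anticipate is the bookkeeping around the projection $p$ and the central-type projection $z$: one must be careful that $pL(\Omega)p$ really is the compression one thinks it is and that the conditional expectation $E_{pL(\Omega)pz}$ relates to $E_{L(\Omega)}$ in the expected way after cutting by $pz$. A clean way to handle this is to first observe $E_{L(\Omega)z}(\,\cdot\,) = E_{L(\Omega)}(\,\cdot\,)z$ on $L(\La)z$ (valid because $z$ is central relative to $L(\La)$), then note $p\in L(\Omega)z$ and use the standard formula $E_{pL(\Omega)zp}(pxp) = p\,E_{L(\Omega)z}(x)\,p$ scaled appropriately. Once these identifications are in place the inequality above is a routine Cauchy–Schwarz/Bessel estimate. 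An alternative, possibly slicker route would be to invoke Theorem \ref{basic}: a finite Pimsner–Popa basis gives $[pL(\La)pz : pL(\Omega)pz]_{PP}<\infty$, then argue that finiteness of this probabilistic index propagates back up to $[L(\La):L(\Omega)]_{PP}<\infty$ using that $pz$ has nonzero trace and that the index is monotone under compression in the reverse direction for these group-algebra inclusions — but making ``reverse monotonicity'' precise still requires the same coset analysis, so I expect the direct computation to be the most transparent.
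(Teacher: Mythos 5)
Your counting step does not go through as written, for three concrete reasons. First, the vectors $p u_{r_i} z$ that you expand in the finite Pimsner--Popa basis do not lie in $pL(\La)pz$: elements of that algebra have the form $pxpz$, and $pu_{r_i}z \neq pu_{r_i}pz$ in general. If you replace them by $pu_{r_i}pz$, you lose both the identity $\|pu_{r_i}pz\|_2^2=\tau(pz)$ --- there is no uniform lower bound on $\tau(pu_{r_i}pu_{r_i}^*pz)$ across cosets, so the estimate no longer reads ``$|I|\,\tau(pz)\leq$ finite'' --- and the orthogonality over the subalgebra: $E_{L(\Omega)}(u_{r_i}^*p\,u_{r_j})$ does not vanish for $i\neq j$ (only $E_{L(\Omega)}(u_{r_i}^*u_{r_j})$ does), and ``conjugation and the conditional expectation interact well'' is not an argument; your Bessel estimate needs exact (or quantitatively controlled) module-orthogonality. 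Second, the bookkeeping identity $E_{L(\Omega)z}(xz)=E_{L(\Omega)}(x)z$ is false in general: the trace-preserving expectation onto $L(\Omega)z$ is $xz\mapsto E_{L(\Omega)}(z)^{-1}E_{L(\Omega)}(xz)\,z$, where the inverse is taken on the support of $E_{L(\Omega)}(z)$ and is typically unbounded. This is not routine bookkeeping; handling it is a substantive part of the paper's proof, which cuts by the spectral projections $e_t$ of $E_{L(\Omega)}(z)$ to get a bound with constant $t^{-1}$ and only lets $t\to 0$ at the very end.

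More fundamentally, your sketch is missing the mechanism that replaces Parseval-type counting. The paper does not bound the number of cosets directly; it argues by contradiction: the finite basis yields, for every $\ve>0$, a constant $c_\ve>0$ and a finite set $L_\ve\subset\La$ with $\tau((pe_tz)x(pe_tz)x^*)\leq \ve + c_\ve\sum_{s\in L_\ve}\|E_{L(\Omega)}(xu_s)\|_2^2$ for all $x\in L(\La)$; if $[\La:\Omega]=\infty$ one can choose $x=u_\la$ avoiding the finitely many relevant cosets so that the correction term vanishes, and then Lemma \ref{decay1'} and Corollary \ref{decay1} --- conjugating the projection $pe_tz$ by many such unitaries with pairwise small overlaps and exploiting the quadratic estimate --- force $pe_tz=0$, hence $pz=0$, a contradiction. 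A single element deep in a new coset only makes one conjugate of the projection nearly orthogonal to it and by itself gives no bound on $\tau(pz)$; without an averaging lemma of this type (or a genuinely $pL(\Omega)pz$-orthogonal family of module vectors with norms bounded below, which the compressions $pu_{r_i}pz$ are not), your argument does not close. Your alternative route via ``reverse monotonicity'' of the probabilistic index runs into exactly the same issues, as you partly anticipate.
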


\begin{proof} Let $e\in \mathscr Z(L(\Omega))$ be the support projection of $E_{L(\Omega)}(z)$ and notice $z\leq e$. For $t>0$ denote by $e_t=1_{(t,\infty)}(e)\in \mathscr Z(L(\Omega))$ and notice $e_t$ $SOT$-converges to $e$, as $t\ra 0$. By assumption there exist elements $m'_1,m'_2,\ldots,m'_s\in pL(\La) p z$ such that for every $x\in pL(\La) pz$ we have $x=\sum_i E_{pL(\Omega) pz}(xm_i^{'\ast})m'_i$. If we denote by $m_i=e_tm'_ie_t$, this further implies that for every $x\in pe_tL(\La) pe_tz$  we have  
\begin{equation}\label{10001}
x=\sum_i E_{pe_tL(\Omega) pe_tz}(xm^*_i)m_i.
\end{equation} 
Notice that $E_{L(\Omega) pe_tz}(pe_tz ype_t z)= pE_{L(\Omega)}(yz)pE_{L(\Omega)}(e_tz)^{-1}e_t z$ for all $y\in L(\La)$, where $E_{L(\Omega)}(e_tz)^{-1}$ is the inverse of $E_{L(\Omega)}(e_tz)$ under $e_t$. Therefore $\|E_{L(\Omega) pe_tz}(pe_tz ype_t z)\|_2\leq t^{-1}\|E_{L(\Omega)}( yz)\|_2$, for all $y\in L(\La)$. This together with (\ref{10001}) and basic approximations of $m_i$'s further imply that for every $\ve>0$ one can find a constant $c_\ve>0$ and a finite subset $L_\ve\subset \Lambda$ such that for every $x\in L(\Lambda)$ we have 
 \begin{equation}\label{1.1.1}\begin{split}
 \tau((pe_tz)x(pe_tz)x^*)\leq\ve + c_\ve \sum_{s\in L_\ve} \|E_{L(\Omega)}(x u_s)\|^2_{2}. 
\end{split}
\end{equation}
Setting $S= \Omega$ and $\phi_\ve (x)= c_\ve \sum_{s\in L_\ve} \|E_{L(\Omega)}(x u_s)\|^2_{2}$ we see (\ref{1.1.1}) shows that property (\ref{0.1.1}) in Corollary \ref{decay1} is satisfied. 

To finish, assume by contradiction $[\Lambda:\Omega]=\infty$. Since we have infinitely many representatives of left cosets of $\Omega$ in $\Lambda$ then for every finite subset $F\subset \Omega$  there exists $\la \in \La$ such that $E_{L(\Omega)}(u^*_\la u_{\si})=0$, for all $\si \in F$. This further shows $\phi_\ve$ also satisfies (\ref{0.1.2}) in Corollary \ref{decay1} and hence $pe_tz=0$. Since this holds for every $t>0$ and $e_t$ $SOT$-converges to $e\geq z$ we get $pz=0$, which is a contradiction.  Hence $[\Lambda: \Omega]<\infty$.     \end{proof}

\begin{cor}\label{diffcorn} For every infinite group $\La$ and every $p\in \mathscr P(L(\La))$ the von Neumann algebra $pL(\La) p$ is diffuse.  
\end{cor}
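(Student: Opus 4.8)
The plan is to deduce this from Proposition \ref{fin-index1} applied to the \emph{trivial} subgroup of $\La$, where the conclusion $[\La:\Omega]<\infty$ becomes absurd. First I would reduce to the claim that $L(\La)$ itself is diffuse: if $q\leq p$ are projections in $L(\La)$ then $qL(\La)q=q\,(pL(\La)p)\,q$, so a minimal projection of a corner $pL(\La)p$ would automatically be a minimal projection of $L(\La)$; thus it suffices to rule out minimal projections in $L(\La)$.

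Assume then, toward a contradiction, that $q\in L(\La)$ is a nonzero minimal projection. The key step is to extract from this a nonzero finite-dimensional direct summand of $L(\La)$. Let $z\in\mathscr P(L(\La)'\cap L(\La))$ be the central support of $q$. The center of the corner $L(\La)z$ must be reduced to $\C z$: otherwise $qL(\La)q=q\,(L(\La)z)\,q$ would contain an isomorphic copy of $\mathscr Z(L(\La)z)$ and so fail to be one-dimensional (note $b\mapsto bq$ is injective on $\mathscr Z(L(\La)z)$ precisely because $z$ is the central support of $q$). Hence $L(\La)z$ is a factor; being tracial, it is finite, and a finite factor containing a minimal projection is $\cong M_N(\C)$ for some $N\in\mathbb N$, in particular finite-dimensional.

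It follows that the (unital) inclusion $\C z\subseteq L(\La)z$ admits a finite Pimsner--Popa basis --- for instance $\sqrt N$ times a system of matrix units of $L(\La)z$. I would then apply Proposition \ref{fin-index1} to the chain $\{1\}\leqslant\La\leqslant\La$, taking the projection $1\in\mathscr P(L(\{1\}))$ and the central projection $z\in\mathscr P(L(\La)'\cap L(\La))$: the hypotheses hold since $1\cdot z=z\neq0$ and $1\cdot L(\{1\})\cdot 1\cdot z=\C z\subseteq L(\La)z=1\cdot L(\La)\cdot 1\cdot z$ has a finite Pimsner--Popa basis. The proposition then yields $[\La:\{1\}]<\infty$, i.e.\ $\La$ is finite, contradicting the hypothesis that $\La$ is infinite. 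Therefore $L(\La)$ has no minimal projection, and by the first paragraph every $pL(\La)p$ is diffuse.

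The one step carrying real content is the middle one: turning the bare existence of a minimal projection into a nonzero central projection $z$ with $L(\La)z$ finite-dimensional (this uses only standard structure theory for finite type I algebras together with the existence of a faithful normal trace). Everything else is bookkeeping; the decisive choice is to feed Proposition \ref{fin-index1} the trivial subgroup $\Omega=\{1\}$, which makes its conclusion $[\La:\Omega]<\infty$ flatly incompatible with $\La$ being infinite.
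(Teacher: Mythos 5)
Your argument is correct and is essentially the paper's own proof: both reduce non-diffuseness to a nonzero central projection $z$ with $L(\La)z$ finite-dimensional and then feed Proposition \ref{fin-index1} the trivial subgroup $\Omega=\{1\}$ (with that central $z$), so that its conclusion forces $\La$ to be finite, a contradiction. If anything you are a bit more careful than the paper, which asserts outright a central $q$ with $L(\La)q=\mathbb{C}q$, whereas your central-support argument correctly produces only a matrix-algebra summand $L(\La)z\cong M_N(\mathbb{C})$ --- and your scaled matrix-unit Pimsner--Popa basis shows this suffices for the proposition to apply.
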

\begin{proof}
Assuming otherwise, there exists a projection $0\neq q\in \mathscr Z(L(\La))$ so that $L(\La) q=\mathbb C q$. By Proposition \ref{fin-index1} this further implies $\La$ is a finite, contradicting the hypothesis.
\end{proof}


\section{The Transfer of the Commutation Relation through $W^*$-Equivalence}

In this section we prove the first crucial step towards Theorem \ref{main1}, that any group in the $W^*$-equivalence class of a product of groups in $\mathcal S_{nf}$ must itself contain two commuting non-amenable subgroups. In order to do so, we make use of a powerful ``discretization'' technique for group von Neumann algebras introduced by A. Ioana in \cite{Io11}. Since we follow an approach very similar to \cite[Theorem 3.1]{Io11}, we include a proof addressing only the novel points. To be able to state the result properly we need to recall some previous results and introduce notation.

\begin{theorem}\cite[Theorem 3.2]{CS11} Assume $\G$ is an exact non-amenable group together with $\mathcal G$ a family of subgroups for which there exits a weakly-$\ell^2$ representation $\pi$ such that $\mathcal {RA}(\G, \mathcal G, \pi)\neq \emptyset$ (equivalently, $\G$ is bi-exact with respect to the family $\mathcal G$ in the sense of \cite[Definition 15.1.2]{BO08}). Let $\G\ca B$ be a trace preserving action on a finite amenable von Neumann algebra and denote by $M=B\rtimes \G$ the corresponding crossed product von Neumann algebra. For every projection $p\in M$ and every weakly compact embedding $Q\subseteq pMp$ we have either $Q\preceq_M B\rtimes \Sigma$, for some $\Sigma\in \mathcal G$ or $\mathscr N_{pMp}(Q)''$ is amenable.   
\end{theorem}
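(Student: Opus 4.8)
The plan is to run the ``weakly compact subalgebra'' machinery of Ozawa and Popa in the relative framework attached to the family $\mathcal G$. Write $\mathcal N=\mathscr N_{pMp}(Q)''$, assume $\mathcal N$ is non-amenable, and aim to produce $\Sigma\in\mathcal G$ with $Q\preceq_M B\rtimes\Sigma$. Throughout, the corner $p$ is handled by working inside $1_pL^2(M)1_p$, identifying it with $L^2(pMp)$, and using the relative Jones projections for $pMp\subseteq p(B\rtimes\Sigma)p$; this bookkeeping is routine and I will suppress it below.

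\emph{Step 1: a relative Akemann--Ostrand property for $M$.} Since $B$ is amenable and $\G$ is bi-exact relative to $\mathcal G$ (the stated reformulation of $\mathcal{RA}(\G,\mathcal G,\pi)\neq\emptyset$), I would first record that $M=B\rtimes\G$ is bi-exact relative to the family $\{B\rtimes\Sigma:\Sigma\in\mathcal G\}$, in the following concrete form: letting $e_{B\rtimes\Sigma}\in\mathbb B(L^2M)$ be the Jones projection of $B\rtimes\Sigma\subseteq M$ and $\mathbb K_{\mathcal G}\subseteq\mathbb B(L^2M)$ the hereditary $C^*$-subalgebra generated by $\{x\,e_{B\rtimes\Sigma}\,y:\Sigma\in\mathcal G,\ x,y\in M\}$, there is a unital completely positive $M$-$M$-bimodular map $\Phi\colon M\to\mathbb B(L^2M)/\mathbb K_{\mathcal G}$ whose image, together with the right copy of $M$, generates a nuclear $C^*$-algebra. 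This is exactly the $C^*$-input for the rest of the argument; in the genuine (non-quasi-)cocycle case it follows from \cite[Ch.~15]{BO08}, and the array language of \cite{CS11} is used only here, to pass from $\pi$ to this statement, together with the elementary fact that an amenable $B$ does not affect bi-exactness of the crossed product.

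\emph{Step 2: a limit state, and the dichotomy.} Weak compactness of $Q\subseteq pMp$ gives a net $(\xi_i)$ of unit vectors in the positive cone of $L^2(pMp)\,\bar\otimes\,\overline{L^2(pMp)}$, viewed as Hilbert--Schmidt operators inside $\mathbb B(1_pL^2M)$, with $\|(u\otimes\bar u)\xi_i-\xi_i\|_2\to0$ for $u\in\mathscr U(Q)$, $\|(v\otimes\bar v)\xi_i(v\otimes\bar v)^*-\xi_i\|_2\to0$ for $v$ normalizing $Q$, and $\langle(a\otimes1)\xi_i,\xi_i\rangle,\langle(1\otimes\bar a)\xi_i,\xi_i\rangle\to\tau(ap)/\tau(p)$. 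Passing to a weak$^*$-cluster point of the states $x\mapsto\langle x\xi_i,\xi_i\rangle$ on $\mathbb B(1_pL^2M)$ yields a state $\omega$ that is $Q$-central, invariant under conjugation by normalizing unitaries, and whose restrictions to the left and right copies of $pMp$ both equal the normalized trace. Now split into two cases. If $\omega$ vanishes on the corner of $\mathbb K_{\mathcal G}$, then $\omega$ factors through $\mathbb B(1_pL^2M)/\mathbb K_{\mathcal G}$, and composing with $\Phi$ upgrades $Q$-centrality to $\mathcal N$-centrality on the nuclear image; combined with the trace conditions this exhibits a hypertrace for $\mathcal N$, so $\mathcal N$ is amenable by Connes' theorem --- contradiction. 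Hence $\omega$ does not vanish on $\mathbb K_{\mathcal G}$: after standard approximations there are $\Sigma\in\mathcal G$, a finite $K\subseteq\G$, and $\delta>0$ with
\[
\limsup_i\ \Big\langle\Big(\sum_{g\in K}u_g\,e_{B\rtimes\Sigma}\,u_g^*\otimes 1\Big)\xi_i,\ \xi_i\Big\rangle\ \geq\ \delta .
\]
Rewriting the left side via $\|E_{B\rtimes\Sigma}(\cdot)\|_2$ and using the $Q$-almost-invariance of the $\xi_i$, I would conclude that there is no sequence $(u_n)\subseteq\mathscr U(Q)$ with $\|E_{B\rtimes\Sigma}(xu_ny)\|_2\to0$ for all $x,y\in M$; by Theorem~\ref{corner} this is precisely $Q\preceq_M B\rtimes\Sigma$.

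\emph{Main obstacle.} The delicate part is the last implication of Step 2: converting non-vanishing of the limit state $\omega$ on the boundary piece $\mathbb K_{\mathcal G}$ into the intertwining relation, which requires the standard but fiddly estimate linking the Jones projections $e_{B\rtimes\Sigma}$, the conditional expectations $E_{B\rtimes\Sigma}$, and the $Q$-almost-central vectors $\xi_i$, all while carrying the cut-down by $p$. A secondary technical point is Step 1 itself: establishing the relative Akemann--Ostrand property of $M$ for a nontrivial amenable $B$ and, in the quasi-cocycle regime, extracting it from the array $\pi$ rather than from an honest $1$-cocycle.
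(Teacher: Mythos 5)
First, a point of reference: the paper offers no proof of this statement --- it is quoted verbatim from \cite{CS11} --- so the comparison is with the proof given there. Your route is genuinely different from that one. In \cite{CS11} the theorem is proved by deformation/rigidity: from the array into the weakly-$\ell^2$ representation $\pi$ one builds the associated Gaussian dilation and a one-parameter deformation of $B\rtimes\Gamma$, and then combines Ozawa--Popa weak compactness \cite{OP07} with transversality and spectral-gap arguments, obtaining amenability of $\mathscr N_{pMp}(Q)''$ unless the deformation converges uniformly on $\mathscr U(Q)$, which via Popa's criterion yields $Q\preceq_M B\rtimes\Sigma$. You instead run the $C^*$-algebraic Akemann--Ostrand/bi-exactness argument of Ozawa and \cite[Ch.~15]{BO08}, leaning on the parenthetical equivalence with relative bi-exactness. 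That is a legitimate alternative strategy --- for $B=\mathbb C$, which is the only case used in this paper, it is essentially the known proof of the normalizer version of \cite[Theorem 15.1.5]{BO08} --- and it has the merit of not needing the deformation machinery at all; what it loses is that it must route through exactness and the bi-exactness reformulation rather than working directly with the array.

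As a proof, however, the sketch is incomplete exactly at the points you flag, and contains one inaccuracy. (i) In Step 1 the assertion that the image of $\Phi$ together with the right copy generates a \emph{nuclear} $C^*$-algebra is false in general: a nontrivial amenable $B$ (say the hyperfinite II$_1$ factor) is not nuclear as a $C^*$-algebra. What relative bi-exactness plus semidiscreteness of $B$ actually gives is min-continuity of the multiplication map $(B\rtimes_r\Gamma)\odot J(B\rtimes_r\Gamma)J\to \mathbb{B}(L^2M)/\mathbb{K}_{\mathcal G}$ (weak nuclearity of the relevant u.c.p.\ maps), and in Case A one must then invoke exactness/local reflexivity and Ozawa's extension trick (Arveson extension, multiplicative domain) to promote the central state on this $C^*$-algebra to a hypertrace for $\mathcal N$ on all of $\mathbb{B}(1_pL^2M)$; ``nuclearity of the image'' does not by itself do this. (ii) The upgrade from $Q$-centrality to $\mathcal N$-centrality is not ``composing with $\Phi$'': it uses the $\mathrm{Ad}(v\otimes\bar v)$-invariance for normalizers $v$ together with the fact that, modulo $\mathbb{K}_{\mathcal G}$, the right copy commutes with the left one, and this needs to be written out. (iii) In Case B the quantifier order is delicate: the net $(\xi_i)$ is only asymptotically invariant under each fixed $u\in\mathscr U(Q)$, so one cannot fix $i$ and then run a sequence $(u_n)$ witnessing non-intertwining; the standard resolution goes through the trace-restriction property of $\omega$ and a Cauchy--Schwarz estimate on terms of the form $\omega\bigl(ux\,e_{B\rtimes\Sigma}\,x^*u^*\otimes1\bigr)$, and it is precisely this estimate --- which you defer --- that closes the argument. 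None of this looks unfixable, but as written the two crucial implications are asserted rather than proved.
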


\begin{notation} Given $\G=\G_1\times \G_2\times \cdots\times \G_n$, for every subset $F\subseteq \{1,\ldots,n\}$ we denote by $ \G_F$ the subgroup of in $\G$ consisting of elements whose $i$-th components are trivial, for all $i\in \{1,2,...,n\}\setminus F$. We denote by $\hat \G_{F}=\G_{\{1,2,...,n\}\setminus F}$ and for brevity $\hat\G_j= \G_{\{1,2,...,n\} \setminus \{j\}}$. Notice that $\hat{\hat {\G}}_F=\G_F$. 
\end{notation}

In the spirit of \cite[Theorem 3.1]{Io11} we prove the following theorem 

\begin{theorem}\label{comm1} For $n\geq 2$ let $\G_1, \G_2, \ldots, \G_n \in \mathcal S_{nf}$ with $\G=\Gamma_1\times\Gamma_2\times \cdots \times \G_n$ and denote by $M= L(\G)$. If $t>0$ and $\La$ is an arbitrary group such that $L(\Lambda)=M^t $ then for every non-empty family $\mathcal G$ of subgroups of $\La$ there exists  $1\leq \ell\leq n$ such that, either:
 \begin{enumerate}
 \item ${L(\hat\Gamma_\ell)}^t \preceq_{M^t} L(\Sigma)$ for some $\Sigma\in\mathcal G$, or
 \item \label{1001} ${L(\G_{\ell})}^t \preceq_{M^t} L(\cup_s \Omega_s)$ where  $\Omega_s=C_\Lambda(\Sigma_s)$ and  $\Sigma_s \leqslant \Lambda$ is a descending sequence of subgroups such that  $\Sigma_s\notin \mathcal G$, for all $s$. 
 \end{enumerate}
\end{theorem}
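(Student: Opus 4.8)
\textbf{Proof proposal for Theorem \ref{comm1}.}

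The plan is to follow the ultraproduct ``discretization'' strategy of \cite[Theorem 3.1]{Io11}, adapted to the product setting, and to use bi-exactness of each factor $\G_j$ (which holds because $\G_j\in\mathcal S_{nf}$ is weakly amenable, hence lies in $\mathcal{QH}_{reg}$ and is exact, so $\G$ is bi-exact relative to the family $\{\hat\G_1,\dots,\hat\G_n\}$) to run the dichotomy. First I would fix a free ultrafilter $\omega$ and pass to the ultrapower $M^\omega$. Identifying $L(\La)=M^t$ inside $M^\omega$, Ioana's technique produces, for the group $\La$, a copy of the group von Neumann algebra sitting inside the ultrapower in such a way that the \emph{group elements} of $\La$ are implemented by a coarse/``virtual'' family of unitaries in $M^\omega$; concretely, one obtains a net of unitaries in $M$ (indexed through the ultrafilter) approximating the canonical unitaries $\{v_\lambda\}_{\lambda\in\La}$ of $L(\La)$. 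The point of this is that commutation relations and relative-commutant data in $\La$ get transported into asymptotic statements inside $M^\omega$, and $M^\omega$ still ``sees'' the bi-exactness of $\G$ because that property passes to ultrapowers in the appropriate amenability-dichotomy form.

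Next I would apply the relative-solidity dichotomy, i.e. the quoted \cite[Theorem 3.2]{CS11} (with $B=\C$, so $M=L(\G)$ is a crossed product $\C\rtimes\G$ and $\G$ is bi-exact relative to $\{\hat\G_\ell\}$), to a suitable subalgebra of the ultrapower generated by a piece of $\La$. For a fixed $\Sigma\in\mathcal G$ one considers the algebra generated by $\{v_\sigma : \sigma\in\Sigma\}$ together with the relative commutant data; the dichotomy in $M^\omega$ gives, for each such $\Sigma$, that either a corner of $L(\Sigma)$ intertwines into some $L(\hat\G_\ell)^t$ inside $M^\omega$, or else the normalizer is amenable. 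Then one runs a standard argument to descend the ultrapower intertwining to an honest intertwining in $M^t$: using that $L(\hat\G_\ell)^t$ is regular in $M^t$ (it is the tensor complement of $L(\G_\ell)^t$, up to scaling), one converts $L(\Sigma)\preceq_{M^\omega}\cdot$ into a statement about $L(\hat\G_\ell)^t$, and by symmetry of $\preceq$ inside the factor one either gets $L(\hat\G_\ell)^t\preceq_{M^t}L(\Sigma)$ (alternative (1)), or one must be in the complementary ``amenable normalizer'' case. Keeping track of which $\ell$ occurs: since $\mathcal G$ is nonempty, fix $\Sigma_0\in\mathcal G$; if alternative (1) holds for some $\ell$ and some $\Sigma\in\mathcal G$ we are done, so assume it fails for every $\ell$ and every $\Sigma\in\mathcal G$.

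In the remaining case, for every $\Sigma\in\mathcal G$ the dichotomy forces the normalizer-type algebra associated to $\Sigma$ to be amenable, and in particular (via intertwining bookkeeping for group von Neumann algebras, as in the proof of Ioana's theorem) one obtains that $L(\G_\ell)^t$ intertwines into the algebra generated by the centralizer $C_\La(\Sigma)$ for a well-chosen descending sequence $\Sigma_s$ of subgroups that are \emph{not} in $\mathcal G$ (they arise as groups generated by long products of conjugates that escape $\mathcal G$, precisely as in \cite[Theorem 3.1]{Io11}); taking $\Omega_s=C_\La(\Sigma_s)$ gives alternative (\ref{1001}). The main obstacle, and the step I would spend the most care on, is the descent from the ultrapower back to $M^t$: one must verify that the ``virtual''/coarse implementation of $\La$-elements produced by the discretization is compatible with the bi-exactness dichotomy of \cite[Theorem 3.2]{CS11} applied \emph{inside} $M^\omega$, i.e. that the relevant subalgebra of $M^\omega$ is still a corner of a crossed product by a bi-exact group and that ``$Q\preceq_{M^\omega}\C\rtimes\hat\G_\ell$'' can be pulled back to a genuine intertwining in $M$. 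This is exactly the ``novel point'' referred to before the statement, and handling the scalar $t$ carefully (so that $L(\hat\G_\ell)^t$ and $L(\G_\ell)^t$ are the correct tensor factors of $M^t$) is the place where one has to be most attentive.
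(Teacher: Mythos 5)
There is a genuine gap here, in fact two. First, your description of Ioana's ``discretization'' misses its actual mechanism. The technique is not about producing a net of unitaries in $M$ approximating the canonical unitaries $v_\la$ of $L(\La)$ inside an ultrapower $M^\omega$ (the $v_\la$ already live in $M^t$; there is nothing to approximate). It is the comultiplication $\triangle: M^t\to M^t\bar\otimes M^t$, $v_\la\mapsto v_\la\otimes v_\la$, which embeds $L(\La)$ into the tensor square viewed as $L(\G)^k\bar\otimes L(\G)^k$. The bi-exactness dichotomy is then applied not to an algebra generated by some $\Sigma\in\mathcal G$, but to the commuting pair $\triangle(L(\G_\ell)^s)$, $\triangle(L(\hat\G_\ell)^k)$ inside $M^k\bar\otimes M^k$, using bi-exactness of $\G\times\G$ relative to the family $\{\G\times\hat\G_i,\ \hat\G_j\times\G\}$; this yields $\triangle(L(\hat\G_1)^t)\preceq M^k\bar\otimes L(\hat\G_i)^k$ (or its mirror), which via Popa's criterion becomes a lower bound on $\sum_\la|y_\la|^2\|E_{L(\hat\G_i)^k}(v_\la x)\|_2^2$ for all $y\in\mathscr U(L(\hat\G_1)^t)$, expressed in the Fourier coefficients of $y$ over $\La$. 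That Fourier-coefficient estimate is the whole point of the discretization, and your setup never produces it. Without it you have no way to connect the family $\mathcal G$ (an arbitrary family of subgroups of $\La$, on which no structural hypothesis is available) to the product structure of $\G$.

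Second, and fatally, your route to alternative (1) relies on ``symmetry of $\preceq$ inside the factor'' to convert a putative $L(\Sigma)\preceq L(\hat\G_\ell)^t$ into $L(\hat\G_\ell)^t\preceq_{M^t}L(\Sigma)$. Popa's intertwining relation is not symmetric, even between subfactors of a II$_1$ factor, so this step fails outright. In the paper the logic runs in the opposite direction: one \emph{assumes} alternative (1) fails for every $\Sigma\in\mathcal G$, and Popa's criterion then supplies unitaries $y\in\mathscr U(L(\hat\G_1)^t)$ whose $\La$-Fourier mass escapes every subset of $\La$ small relative to $\mathcal G$; combining this with the lower bound above gives a uniform estimate off small sets, which survives at an ultraproduct element $v_\theta$ with $\theta\in\prod_{S\in\om}\La\setminus S$ (the cofinal ultrafilter lives on the directed set of small sets, not on $\mathbb N$). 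A Hilbert-space argument using $[L(\G_i)^s,L(\hat\G_i)^k]=0$ then forces $L(\G_i)^t\preceq_{M^t}L(\Theta)$ with $\Theta=\La\cap\theta\La\theta^{-1}$, and $\Theta=\cup_s C_\La(\Sigma_s)$ with $\Sigma_s\notin\mathcal G$ descending, which is exactly alternative (2); your sketch of this last step borrows the conclusion of \cite[Theorem 3.1]{Io11} without having the estimates that drive it. A minor further point: weak amenability is neither assumed in this theorem nor relevant to bi-exactness --- membership in $\mathcal S$ (equivalently $\mathcal{QH}_{reg}$ plus exactness) is what is used, and weak amenability only enters much later in the paper.
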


\begin{proof}
Denote by $\set{u_\gamma}_{\gamma\in \Gamma}$, $\set{v_\lambda}_{\lambda\in \Lambda} $ the respective canonical $\Gamma,\Lambda $ group unitaries generating $M$.  Define $ \triangle : M^t\to M^t\bar{\otimes} M^t$ by $v_\lambda\mapsto v_\lambda\otimes v_\lambda $ and extend linearly.  $\triangle $ is clearly a $* $-homomorphism and $\triangle(M^t)\subset M^t\bar{\otimes} M^t $.  Now we may also view $M^t\bar{\otimes }M^t=L(\Gamma)^t\bar{\otimes} L(\Gamma)^t $. Let $k>t$ be an integer and let 
$p\in M^k$ be a projection of trace $s=tk^{-1}$. Since $ \Gamma$ is a product of $\Gamma_l $ and  $ \hat \Gamma_l$ we can write  $L(\La)=M^t= (M^k)^s =L(\hat\G_l)^k \bar\otimes L(\G_l)^s $, for every $1\leq l\leq n$. Thus we have
\begin{align*}
\triangle(L(\G_l)^s), \triangle(L(\hat\G_l)^k)\subset \triangle(L(\hat\G_l)^k \bar\otimes L(\G_l)^s )\subset L(\Gamma)^k\bar{\otimes} L(\Gamma)^k.
\end{align*}
By \cite[Lemma 15.3.3]{BO08}, $\Gamma\times\Gamma$ is bi-exact relative to the family \[\mathcal H = \{\G\times \hat \G_i, \hat \G_j\times \G \,:\,1\leq i,j\leq n\}.\] Hence, by \cite[Theorem 15.1.5]{BO08}\footnote{See also the main results in the seminal papers \cite{Oz03, Oz05} where these techniques where introduced or the more recent developments \cite[Theorem 3.2]{CS11} and \cite[Theorem 3.3]{BC14}.} we have that $\triangle(L(\hat \G_1)^t) \preceq_{M^k\bar\otimes M^k} M^k\bar\otimes L(\hat\G_i)^k$ or  $\triangle(L(\hat \G_1)^t) \preceq_{M^k\bar\otimes M^k} L(\hat\G_i)^k\bar\otimes M^k$ for some $1\leq i\leq n$. By consideration of symmetry we will only treat the case $\triangle(L(\hat \G_1)^t) \preceq_{M^k\bar\otimes M^k} M^k\bar\otimes L(\hat \G_i)^k$, where $1\leq i\leq n$ is fixed for the rest of the proof. 

By Popa's intertwining techniques, there exists $F\subset M^k$, finite, and $c'>0 $ so that 
\begin{align*}
\sum_{x\in F} \norm{ E_{M^k\bar\otimes L(\hat \Gamma_i)^k} (\triangle(y)\cdot (1\otimes x))}_2^2\geq c', \text{  for all }y\in \mathscr U(L(\hat\Gamma_1)^t),
\end{align*}
where $\|\cdot\|_2$ is induced by the trace $\tau$ on $M^k\bar \otimes M^k$. Writing  $y\in L(\hat \Gamma_1)^t\subset M^t $ as $y=\sum y_\lambda v_\lambda $ and using the previews equation we see that letting $c=\tau(p) c'$ we have
\begin{align}
\sum_{x\in F} \sum_{\lambda\in \Lambda} |y_\lambda|^2 \norm{ E_{L(\hat \Gamma_i)^k} (v_\lambda x)}_2^2\geq & c, \text{  for all }y\in \mathscr U(L(\hat \Gamma_1)^t). 
\end{align}
Now let $\mathcal{G}=\set{\Sigma_j\leqslant  \Lambda : j\in J} $ be an arbitrary family of subgroups of $\Lambda $.  We say that $S\subset \Lambda $ is \emph{small relative to $\mathcal{G} $} if there exist finite subsets $K,L\subset \Lambda $, $\mathcal{G}_0\subset \mathcal{G}$ so that $S\subset K\mathcal{G}_0 L$. Assume that for all $\Sg\in \mathcal G$ we have that $L(\hat \G_1)^t\not\preceq_{M^t} L(\Sg)$.  By Popa's intertwining technique it follows that for all  $\varepsilon>0$ and for each $S \subset\Lambda$ small relative to $\mathcal{G} $, there is a $y\in \mathscr U(L(\hat \Gamma_1)^t) $ so that $\sum_{\lambda\in S} |y_\lambda|^2<\varepsilon$. Then, indexing over all sets $S$ small relative to $\mathcal G$ it follows that for any $y\in \mathscr U(L(\hat \G_1)^t)$:
\begin{align}\label{2.1}
\sup_{\Lambda\setminus S}\norm{E_{L(\hat \Gamma_i)^k}(v_\lambda x)}_2^2\geq & \left[ \displaystyle\sum_{\lambda\in \Lambda\setminus S}|y_\lambda|^2\norm{E_{L(\hat \Gamma_i)^k}(v_\lambda x)}^2_2 \right]\left [\displaystyle\sum_{\lambda\in \Lambda\setminus S} |y_\lambda|^2 \right ]^{-1}, 
\end{align}
whence
\begin{align*}
\sum_{x\in F}\sum_{\lambda\in \Lambda\setminus S}|y_\la|^2\norm{E_{L(\hat \Gamma_i)^k}(v_\lambda x)}_2^2\geq &{\displaystyle \sum_{x\in F} \left[ \sum_{\lambda\in \Lambda}|y_\lambda|^2\norm{E_{L(\hat \Gamma_i)^k}(v_\lambda x)}^2_2-\sum_{\lambda\in S}|y_\lambda|^2\norm{E_{L(\hat \Gamma_i)^k}(v_\lambda x)}^2_2  \right]}\\
\geq & {\displaystyle c-|F|\max_{x\in F}\norm{x}_2^2\sum_{x\in S } |y_\lambda|^2}.
\end{align*}
Choosing $\varepsilon>0 $ sufficiently small and choosing $y\in \mathscr U(L(\hat \G_1)^k)$ so that $\sum_{\la\in \La\setminus S} |y_\la|^2< \ve$ we see 
\begin{align*}
\sum_{x\in F}\sum_{\lambda\in \Lambda\setminus S}|y_\la|^2\norm{E_{L(\hat \Gamma_i)^k}(v_\lambda x)}_2^2\geq {\displaystyle c-|F|\max_{x\in F}\norm{x}^2_2\varepsilon}>2^{-1} c>0.
\end{align*}

Using this together with (\ref{2.1}) we see that for every $S$ small with respect to $\mathcal{G} $ we have
\begin{align}\label{eq:SupCoefficientsLarge}
\sup_{\la\in \Lambda\setminus S}\sum_{x\in F}\norm{E_{L(\hat \Gamma_i)^k}(v_\lambda x)}_2^2\geq 2^{-1}(1-\ve)^{-1} c> 2^{-1}c.
\end{align}

Let $I$ be the directed set of all $S\subset \Lambda $ so that $S $ is small with respect to $\mathcal{G} $ and fix $\omega $ a cofinal ultrafilter on $I$.  Letting $\Theta=\Lambda\cap \theta\La \theta^{-1} $ for some $\theta\in \prod_{S\in \om}\Lambda\setminus S$, assume by contradiction $L(\Gamma_i)^t\not\preceq_{M^t} L(\Theta) $.  This implies $L(\Gamma_i)^s\not\preceq_{M^t} L(\Theta) $ and by Popa's intertwining techniques, there exists a sequence $\set{y_n}\subset \mathscr U(L (\Gamma_i)^s) $ so that 
\begin{align}\label{eq:ExpectationToZero}
\norm{E_{L(\Theta)} (xy_ny)}_2\to 0
\end{align}
as $n\to \infty $ for all $x,y\in M^t $; here $\|\cdot\|_2$ is induced by the trace on $M^t$.  Let $\mathcal K\subset L^2((M^k)^\omega)$ be the closed span of $\set{M^k pv_{\theta}pM^k} $ and $P_{\mathcal K}:L^2((M^k)^\omega)\to \mathcal K $ be the orthogonal projection onto $\mathcal K$.  Proceeding as in the proof of \cite[Theorem 3.1]{Io11}, equation \eqref{eq:ExpectationToZero} implies  $\generator{y_n\xi y_n^*,\eta}\to 0$ for all $\xi,\eta\in \mathcal K$. Also relation \eqref{eq:SupCoefficientsLarge} shows that $\sum_{x\in F}\norm{E_{(L (\hat\Gamma_i)^k)^\omega}(v_{\theta}x)}_2\geq c/2 $.  Take $x\in F$  so that $x'=E_{(L(\hat\Gamma_i)^k)^\omega}(v_{\theta} x)\neq 0 $ and $\xi_0= P_{\mathcal K}(x') $. Then
$\norm{v_{\theta}x-x'}_2<\norm{v_{\theta}x}_2 =\norm{x}_2$  which in turn gives $\norm{v_{\theta}x-\xi_0}_2<\norm{v_{\theta} x}_2 $, showing $ \xi_0\neq 0$. Since $[L(\G_i)^s,L(\hat\G_i)^k]=1$, we have $\norm{\xi_0}_2=\generator{y_ny_n^*\xi_0,\xi_0}=\generator{y_n\xi_0 y_n^*,\xi_0}\to 0$, contradicting $\xi_0\neq 0 $.  Thus $L(\Gamma_i)^t\preceq_{M^t}L(\Theta) $. Proceeding as in the proof of \cite[Theorem  3.1]{Io11} there exists a descending sequence of subgroups $\Sigma_s \leqslant \Lambda$ such that  $\Sigma_s\notin \mathcal G$ and  $\Theta=\cup_s \Omega_s $, where  $\Omega_s=C_\Lambda(\Sigma_s)$ and the conclusion and part (\ref{1001}) then follows. \end{proof}

\begin{cor}\label{int1}Let $\G_1,\G_2,\ldots ,\G_n\in \mathcal S$ with $\G=\G_1\times \G_2\times\cdots\times\G_n$ and denote by $M=L\G$. Let $t>0$ be a scalar and let $\La$ be an arbitrary group such that $M^t =L\La$. There exists a non-amenable subgroup $\Sigma<\La$ with non-amenable centralizer $C_\La(\Sigma)<\La$ and $1\leq \ell\leq n$ such that ${L(\hat\G_\ell)}^t\preceq_{M^t} L(\Sigma)$.  
\end{cor}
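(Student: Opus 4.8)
The plan is to apply Theorem \ref{comm1} with a cleverly chosen family $\mathcal G$ of subgroups of $\La$ so that the first alternative is forced. First I would take $\mathcal G$ to be the family of \emph{all amenable subgroups} of $\La$ (or, to be safe, all subgroups $\Sg \leqslant \La$ for which $L(\Sg)$ is amenable; since these groups are ICC with $L(\La)$ a II$_1$ factor, amenability of $\Sg$ is equivalent to amenability of $L(\Sg)$). This is clearly non-empty, so Theorem \ref{comm1} yields some $1\leq \ell\leq n$ for which either (1) ${L(\hat\G_\ell)}^t \preceq_{M^t} L(\Sg)$ for some $\Sg\in \mathcal G$, or (2) ${L(\G_\ell)}^t \preceq_{M^t} L(\cup_s \Omega_s)$ with $\Omega_s = C_\La(\Sg_s)$ and $\Sg_s$ a descending chain of subgroups with each $\Sg_s \notin \mathcal G$, i.e.\ each $\Sg_s$ non-amenable.

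The key step is to rule out alternative (1). Suppose ${L(\hat\G_\ell)}^t \preceq_{M^t} L(\Sg)$ with $\Sg$ amenable. Since $\hat\G_\ell = \prod_{j\neq \ell}\G_j$ is a product of at least one group (when $n\geq 2$) in $\mathcal S_{nf}$, it is a nontrivial product of ICC non-amenable groups, hence $L(\hat\G_\ell)^t$ is a II$_1$ factor that is not amenable and, more to the point, not even \emph{solid} — it contains a copy of $L(\G_j)\bar\otimes L(\G_{j'})$ with both factors diffuse, whose relative commutant (namely $L(\G_{j'})$) is diffuse and non-amenable. But a corner of a von Neumann algebra that embeds into an amenable algebra must itself be amenable (intertwining into an amenable subalgebra forces amenability of a corner of the source, e.g.\ via the fact that $P \preceq_M L(\Sg)$ with $L(\Sg)$ amenable implies some corner $pPp$ is amenable); this contradicts the non-amenability of every corner of $L(\hat\G_\ell)^t$, which follows since $L(\hat\G_\ell)$ is a II$_1$ factor. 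Hence (1) fails and we are in case (2).

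It remains to extract the conclusion from alternative (2). From ${L(\G_\ell)}^t \preceq_{M^t} L(\cup_s \Omega_s)$ and the fact that $L(\G_\ell)^t$ is a finitely-generated — in fact any — diffuse subalgebra of a factor, a standard compactness/finiteness argument in Popa's intertwining theory allows us to pass from the increasing union $\cup_s \Omega_s$ to a single term: there exists $s_0$ with ${L(\G_\ell)}^t \preceq_{M^t} L(\Omega_{s_0})$, where $\Omega_{s_0} = C_\La(\Sg_{s_0})$. (Concretely, if no single $L(\Omega_s)$ worked, one could diagonalize to produce a sequence in $\mathscr U(L(\G_\ell)^t)$ whose $E_{L(\Omega_s)}$-expectations tend to zero for every $s$, and since the $\Omega_s$ exhaust $\cup_s\Omega_s$ this would contradict ${L(\G_\ell)}^t \preceq_{M^t} L(\cup_s\Omega_s)$; some care with finitely many $x,y\in M^t$ is needed here.) Now set $\Sigma := \Sg_{s_0}$, which is non-amenable by construction, and its centralizer $C_\La(\Sigma) = \Omega_{s_0}$ satisfies ${L(\G_\ell)}^t \preceq_{M^t} L(C_\La(\Sigma))$. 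Since $L(\G_\ell)^t$ is non-amenable (being a corner of the II$_1$ factor $L(\G_\ell)^t$ with $\G_\ell \in \mathcal S_{nf}$ non-amenable), the same ``corner embedding into amenable is amenable'' principle shows $C_\La(\Sigma)$ is non-amenable. Finally, relabel: we have produced a non-amenable $\Sigma < \La$ with non-amenable centralizer. The asserted intertwining ${L(\hat\G_\ell)}^t \preceq_{M^t} L(\Sigma)$ in the statement should then follow by applying Theorem \ref{comm1} once more with the smaller family $\mathcal G' = \{$amenable subgroups$\} \cup \{\Theta \leqslant \La : C_\La(\Theta)$ amenable$\}$ — with this $\mathcal G'$, alternative (2) is impossible by the previous paragraph (its $\Sg_s$ would have amenable centralizer only if... — here one must check the chain $\Sg_s$ cannot escape $\mathcal G'$, which is exactly where one uses that the alternative-(2) groups have non-amenable centralizers), forcing alternative (1): ${L(\hat\G_\ell)}^t \preceq_{M^t} L(\Sg)$ for some $\Sg \in \mathcal G'$, and since the amenable-subgroup case is excluded as before, $\Sg$ has amenable centralizer — which is not quite what we want, so more likely the correct route is to observe directly that the $\Sigma$ and $C_\La(\Sigma)$ from case (2) already give $L(\hat\G_\ell)^t \preceq_{M^t} L(\Sigma)$ by a symmetry/duality between $\G_\ell$ and $\hat\G_\ell$ in the bi-exactness dichotomy applied in the proof of Theorem \ref{comm1}.

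\textbf{Main obstacle.} The delicate point is the last step: converting the conclusion ``${L(\G_\ell)}^t \preceq_{M^t} L(C_\La(\Sigma))$ with $\Sigma, C_\La(\Sigma)$ non-amenable'' into the stated ``${L(\hat\G_\ell)}^t \preceq_{M^t} L(\Sigma)$''. The cleanest fix is almost certainly to re-run Theorem \ref{comm1} with the family $\mathcal G$ chosen to consist of all subgroups $\Sg\leqslant\La$ with $C_\La(\Sg)$ amenable (together with amenable subgroups): then case (2) cannot occur, so case (1) holds, $L(\hat\G_\ell)^t \preceq_{M^t} L(\Sg)$ with $C_\La(\Sg)$ non-amenable and (by the amenability exclusion) $\Sg$ non-amenable, which is exactly the desired statement with $\Sigma = \Sg$. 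So the real work is just setting up the right $\mathcal G$ and verifying the two amenability exclusions; the rest is a formal application of Theorem \ref{comm1} and routine intertwining bookkeeping.
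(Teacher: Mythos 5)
Your overall strategy---choose the family $\mathcal G$ so that one alternative of Theorem \ref{comm1} is destroyed by an amenability argument---is the right one, but the family you finally settle on is inverted, and this is a genuine gap. In your proposed ``fix'' you take $\mathcal G$ to consist of the subgroups $\Sg\leqslant\La$ with $C_\La(\Sg)$ \emph{amenable} (together with the amenable subgroups), and then assert that alternative (2) cannot occur and that alternative (1) yields $\Sg$ with $C_\La(\Sg)$ non-amenable. Both assertions contradict your own choice of family: in alternative (1) the group $\Sg$ belongs to $\mathcal G$, so by definition it is either amenable or has \emph{amenable} centralizer, which is the opposite of what the corollary asks for; and in alternative (2) the chain satisfies $\Sg_s\notin\mathcal G$, i.e.\ the centralizers $\Omega_s=C_\La(\Sg_s)$ are \emph{non-amenable}, so $\cup_s\Omega_s$ need not be amenable and that branch cannot be excluded. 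The correct choice (and the paper's) is the opposite family $\mathcal G=\{\Sg\leqslant\La \,:\, C_\La(\Sg)\ \text{non-amenable}\}$, which is non-empty because $\{1\}\in\mathcal G$. With this family, in alternative (2) each $\Omega_s$ is amenable, the ascending union $\cup_s\Omega_s$ is amenable, and the non-amenable factor $L(\G_\ell)^t$ cannot satisfy $L(\G_\ell)^t\preceq_{M^t}L(\cup_s\Omega_s)$; hence alternative (1) holds, giving $L(\hat\G_\ell)^t\preceq_{M^t}L(\Sigma)$ with $C_\La(\Sigma)$ non-amenable, and $\Sigma$ is itself non-amenable because the non-amenable factor $L(\hat\G_\ell)^t$ has a corner embedding into $L(\Sigma)$. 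A single application of Theorem \ref{comm1} then finishes the proof; no second pass and no ``symmetry/duality'' between $\G_\ell$ and $\hat\G_\ell$ is needed.

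A secondary problem lies in your first route (with $\mathcal G$ the amenable subgroups): you claim that $L(\G_\ell)^t\preceq_{M^t}L(\cup_s\Omega_s)$ can be upgraded, by a ``standard compactness/finiteness argument,'' to $L(\G_\ell)^t\preceq_{M^t}L(\Omega_{s_0})$ for a single $s_0$. This is false in general: intertwining into an increasing union of subalgebras does not pass to a single term (for instance $L(\La)\preceq_{L(\La)}L(\cup_s\Omega_s)$ trivially when $\La=\cup_s\Omega_s$, while $L(\La)\npreceq_{L(\La)}L(\Omega_s)$ for every $s$ if all $\Omega_s$ have infinite index, e.g.\ for a locally finite ICC group). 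Moreover, even granting that step, your route only yields $L(\G_\ell)^t\preceq_{M^t}L(C_\La(\Sigma))$, with the roles of $\G_\ell$ versus $\hat\G_\ell$ and of $\Sigma$ versus its centralizer swapped relative to the statement---an obstacle you correctly identify but do not resolve. With the paper's choice of $\mathcal G$ both issues evaporate, since the entire branch (2) is ruled out wholesale by the amenability of $\cup_s\Omega_s$.
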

\begin{proof} From the assumptions it follows that $\La$ is a non-amenable, ICC group. Let $\mathcal G$ be the collection of all subgroups $\Sigma\leqslant \La$ with non-amenable centralizer $C_\Lambda(\Sigma)$ and notice it is non-empty ($\{1\}\in\mathcal G$). Applying Theorem \ref{comm1} for $\mathcal G$  there exists $1\leq \ell\leq n$ such that either: a) ${L(\hat\G_\ell)}^t \preceq_{M} L(\Sigma)$ for some $\Sigma\in\mathcal G$; or b) there exists a descending sequence of subgroups $\Sigma_s \leqslant \Lambda$ such that  $\Sigma_s\notin \mathcal G$ and  $L(\G_\ell) \preceq_{M} L(\cup_s \Omega_s)$, where  $\Omega_s=C_\Lambda(\Sigma_s)$. 

Assume b) holds. Since $\{\Sigma_s\}$ is a descending sequence of groups not belonging to $\mathcal G$  then  $\{\Omega_ s\}$ is an ascending sequence of amenable groups yielding that $\cup_s \Omega_s$ is amenable; thus, $L(\cup_s \Omega_s)$ is an amenable von Neumann algebra. By hypothesis $L(\G_k)$ is a non-amenable factor, so we must have $L(\G_\ell) \npreceq_{M^t} L(\cup_s \Omega_s)$, contradicting our assumption. 

Hence we must have a). Moreover, since $L(\hat\G_\ell)$ is a factor, whence has no amenable direct summand, $\Sigma$ is a non-amenable group, giving the desired conclusion. 
\end{proof}


\section{Proofs of the Main Results} In this section we use intertwining and solidity techniques to upgrade the existence of commuting nonamenable subgroups of the target group $\La$ to an honest direct product decomposition of $\La$. The bulk of the work goes into the proof of Theorem \ref{product1} which establishes the existence of a virtual product decomposition for $\La$. Before precisely stating this result, we begin with two preliminary solidity-type results on commuting subalgebras in group factors of products of groups in the class $\mathcal S_{nf}$.

\begin{lem}\label{reduction} For $n\geq 2$ let $\G_1,\G_2,\ldots,\G_n\in \mathcal S_{nf}$ with $\G=\G_1\times \G_2\times\cdots\times\G_n$ and denote by $M=L(\G)$. Let $t>0$ be a scalar and let $Q, A\subset M^t$ be commuting subalgebras such that $A$ is diffuse abelian and $Q=T_1\vee T_2\vee \cdots \vee T_{n-1}$, where $T_i\subset M^t$ are commuting, non-amenable II$_1$ subfactors. Then $Q\vee A\npreceq_{M^k} L(\hat \G_s)^k$ for all integers $1\leq s\leq n$ and $k\geq t$.   
\end{lem}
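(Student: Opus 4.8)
The plan is to argue by contradiction: suppose $Q\vee A\preceq_{M^k}L(\hat\G_s)^k$ for some $1\le s\le n$ and some integer $k\ge t$, and set $P:=L(\hat\G_s)^k\subseteq M^k$. The first step is to replace $Q\vee A$ by a more rigid subalgebra. As $T_1,\dots,T_{n-1}$ are mutually commuting subfactors, their join $F:=T_1\vee\cdots\vee T_{n-1}$ is canonically $\ast$-isomorphic to $T_1\bar\otimes\cdots\bar\otimes T_{n-1}$: the surjection of the spatial tensor product onto $F$ has kernel a weak-$\ast$-closed two-sided ideal, hence trivial since $T_1\bar\otimes\cdots\bar\otimes T_{n-1}$ is a factor. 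Since $A$ is diffuse abelian and commutes with the factor $F$, the join $F\vee A$ is in the same way canonically isomorphic to $F\bar\otimes\bar A$, where $\bar A$ (the image of $A$) is still a diffuse abelian algebra. Thus $\mathcal R:=F\vee A\cong T_1\bar\otimes\cdots\bar\otimes T_{n-1}\bar\otimes\bar A$ is a subalgebra of $Q\vee A$, and since $\preceq_{M^k}$ descends to subalgebras (extend a generating unitary group of the subalgebra to one of $\mathcal R$ and apply the criterion in Theorem \ref{corner}), we still have $\mathcal R\preceq_{M^k}P$.

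Next I would invoke the usual corner reformulation of Theorem \ref{corner} to produce a projection $e\in\mathcal R'\cap M^k$, a projection $q$ in an amplification $\widetilde P$ of $P$, a nonzero partial isometry $v$ with $v^\ast v=e$, and a normal unital $\ast$-homomorphism $\psi\colon\mathcal Re\to q\widetilde Pq$ with $\psi(x)v=vx$ for all $x\in\mathcal Re$. Since each $T_i$ is a factor and $e$ lies in its relative commutant, $x\mapsto xe$ is injective on $T_i$, so $\psi$ is injective on $T_ie$ and $S_i:=\psi(T_ie)$ is a non-amenable subfactor of $q\widetilde Pq$; the subfactors $S_i$ pairwise commute, while $B_0:=\psi(\bar Ae)$ is diffuse abelian --- being a normal surjective image of the diffuse abelian algebra $\bar Ae$ --- and commutes with each $S_i$. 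Finally, $q\widetilde Pq$ is an amplification of $L(\hat\G_s)=L(\G_{j_1})\bar\otimes\cdots\bar\otimes L(\G_{j_{n-1}})$, where $\{j_1,\dots,j_{n-1}\}=\{1,\dots,n\}\setminus\{s\}$; each $\G_j$ belongs to $\mathcal S$, hence is bi-exact, so $L(\G_j)$ is solid by Ozawa \cite{Oz05}, and both solidity and non-amenability are preserved by amplification. Therefore $q\widetilde Pq\cong R_1\bar\otimes\cdots\bar\otimes R_{n-1}$ with each $R_i$ a solid non-amenable II$_1$ factor, and we have placed $n-1$ pairwise commuting non-amenable subfactors $S_1,\dots,S_{n-1}$ together with a commuting diffuse algebra $B_0$ inside this product.

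It therefore suffices to establish the following structural fact: a tensor product $R=R_1\bar\otimes\cdots\bar\otimes R_m$ of $m$ solid non-amenable II$_1$ factors cannot contain $m$ pairwise commuting non-amenable subfactors $S_1,\dots,S_m$ together with a diffuse subalgebra $B_0\subseteq(S_1\vee\cdots\vee S_m)'\cap R$. For $m=1$ this is immediate: solidity of $R_1$ forces $B_0'\cap R_1$ to be amenable, contradicting $S_1\subseteq B_0'\cap R_1$. For $m\ge2$ I would feed the $S_i$ into the Ozawa--Popa analysis of \cite{OP03}: iterating the intertwining dichotomy for a tensor product by a solid factor --- a subfactor of $R_1\bar\otimes R'$ whose relative commutant is non-amenable intertwines into $R_1$ or into $R'$ --- over the tensor coordinates, and upgrading the resulting intertwinings to honest unitary conjugacies as in the proof of the unique prime factorization theorem, one obtains (after a harmless redistribution of amplifications among the factors) a unitary $u\in R$ and a permutation $\pi$ with $u\big(S_1\vee\cdots\vee S_m\big)u^\ast=S_1^\sharp\bar\otimes\cdots\bar\otimes S_m^\sharp$, where $S_i^\sharp$ is a non-amenable subfactor of $R_{\pi(i)}$. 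Then $u\big((S_1\vee\cdots\vee S_m)'\cap R\big)u^\ast=\bigotimes_i\big((S_i^\sharp)'\cap R_{\pi(i)}\big)$; by solidity of $R_{\pi(i)}$ each factor $(S_i^\sharp)'\cap R_{\pi(i)}$ has no diffuse abelian subalgebra --- any such subalgebra would be diffuse in $R_{\pi(i)}$ with relative commutant containing the non-amenable $S_i^\sharp$ --- and hence is atomic; a tensor product of finitely many atomic finite von Neumann algebras is atomic, so $(S_1\vee\cdots\vee S_m)'\cap R$ is atomic, contradicting the presence of the diffuse $B_0$.

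The step I expect to be the real obstacle is precisely this last one: turning the intertwining relations supplied by the solidity dichotomy into a simultaneous unitary conjugacy splitting $S_1\vee\cdots\vee S_m$ across the tensor factors one subfactor per coordinate. This is exactly the delicate bookkeeping with corners, amplifications, and relative commutants at the heart of the unique prime factorization argument in \cite{OP03}, here applied to a chain of commuting subfactors rather than to a second tensor decomposition of the whole algebra. By contrast, the remaining ingredients --- that the corner $e$ preserves factoriality of the $T_i$ and diffuseness of $\bar A$, that amplifications of solid factors stay solid, that a finite von Neumann algebra with no diffuse abelian subalgebra is atomic, and that $\preceq_{M^k}$ descends to subalgebras --- are routine.
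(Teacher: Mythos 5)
Your reduction step is sound and essentially parallels the paper's: after assuming $Q\vee A\preceq_{M^k}L(\hat\G_s)^k$, both you and the paper transport the structure ``$n-1$ commuting non-amenable II$_1$ subfactors plus a commuting diffuse abelian algebra'' into (an amplification of) $L(\hat\G_s)$ --- the paper does this by cutting with a projection $re$, $r\in T_1$, $e\in A$, chosen via the central trace, while you use the amplified form of Popa's criterion; either works, and your identification of $Q\vee A$ with $T_1\bar\otimes\cdots\bar\otimes T_{n-1}\bar\otimes A$ is correct, though the justification should note that the trace is automatically multiplicative over commuting factors (so the multiplication map is trace-preserving, hence normal and injective), rather than presupposing the normal surjection from the tensor product.

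The genuine gap is exactly where you anticipated it, and it is not repairable by ``bookkeeping.'' First, the dichotomy you invoke --- a subfactor of $R_1\bar\otimes R'$ with non-amenable relative commutant intertwines into $R_1$ or into $R'$ --- is not a consequence of solidity of the tensor legs; it is a bi-exactness/property (AO) phenomenon (\cite{OP03}, \cite[Theorem 6.1]{CSU11}, \cite{PV12}), so your ``structural fact'' as stated for abstract solid non-amenable factors is unsupported (in the concrete situation one can of course invoke it for the amplified $L(\G_j)$'s, since the $\G_j$ lie in $\mathcal S$). Second, and decisively, the upgrade from these intertwinings to a single unitary $u$ with $u(S_1\vee\cdots\vee S_m)u^*=S_1^\sharp\bar\otimes\cdots\bar\otimes S_m^\sharp$ and $S_i^\sharp\subseteq R_{\pi(i)}$ does not follow from the Ozawa--Popa machinery: their conjugacy lemma (Proposition 12 of \cite{OP03}) relies on the commuting subfactors being tensor complements of one another in the ambient factor ($M_1'\cap M=M_2$, etc.), which fails here precisely because the $S_i$ do not generate and their joint relative commutant contains the diffuse $B_0$; for commuting subfactors with large relative commutant no such simultaneous splitting theorem is available, so this step is a missing theorem, not a delicate application of a known one. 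The paper sidesteps it entirely by never upgrading to conjugacy: it iterates \cite[Theorem 6.1]{CSU11}, at each stage using one of the commuting non-amenable subfactors as the non-amenable commutant to push the join of the remaining subfactors together with the abelian algebra into a sub-product with one fewer group factor (everything staying at the level of $\preceq$ and corners), until after $n-1$ steps a diffuse abelian algebra commutes with a non-amenable factor inside a single $L(\G_j)^{t_{n-1}}$, contradicting solidity. If you replace your final step by this one-leg-at-a-time induction, your argument goes through.
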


\begin{proof} Assume by contradiction $Q\vee A\preceq_{M^k} L(\hat \G_s)^{k}$, for some $1\leq s\leq n$ and $k\geq t$. Then there must exist a projection $p\in \mathscr P(Q\vee A)$, a scalar $k\geq t_1>0$, and a unital injective $\ast$-homomorphism $\phi: p(Q\vee A)p\ra L(\hat \G_s)^{t_1}$. Since $Q$ is a II$_1$ factor we have $\mathscr Z(Q\vee A)=A$ and denote by $E_A: Q\vee A \ra A$ the central trace. Since $0\neq E_A(p)$ there exist  $\mu>0$ and a projection $0\neq e\in A$ so that $E_A(pe)\geq \mu e$. Moreover, since $T_1$ is a II$_1$ factor there exists a projection $ r\in T_1 \subseteq Q$ so that $\tau_Q(r)=\mu$ and $E_A(re)=\mu e$. Thus $E_A(pe)\geq E_A(re)$ and since $E_A$ is a central trace there exists $w\in \mathscr I(Q\vee A)$ so that $pe\geq w^*w$ and $ww^*=re$.  Letting $u\in \mathscr U(Q\vee A)$ with $w=reu$ one can check that $\phi'=\phi\circ ad(u^*): re (Q\vee A)re \ra L(\hat \G_s)^{t_1}$ is an injective $\ast$-homomorphism; moreover, cutting $L(\hat \G_s)^{t_1}$ by a projection we can assume $\phi'$ is unital. By construction  $ A_1 =\phi'(Are)$ is diffuse abelian, and $T^1_i=\phi'(r eT_i r e)\subset L(\hat \G_s)^{t_1}$  are commuting, non-amenable II$_1$ subfactors. Moreover $A_1, Q_0=T^1_1\vee   \cdots \vee T^1_{n-1}\subseteq L(\hat \G_s)^{t_1}$ are commuting subalgebras. By \cite[Theorem 6.1]{CSU11}, one can find $s_1\in \{1,...,n\}\setminus \{s\}$ so that $Q_1\vee A_1\preceq_{L(\hat \G_s)^{k}}  L(\hat \G_{\{s,s_1\}})^{k}$, where $Q_1 =T^1_1\vee \cdots \vee T^1_{n-2}$. Applying the previous argument $n-2$ times one can find $1\leq s_{n-1}\leq n$, $k\geq t_{n-1}>0$, and commuting subalgebras $A_{n-1},Q_{n-2}\subset L(\G_{s_{n-1}})^{t_{n-1}}$ with $A_{n-1}$ diffuse abelian and $Q_{n-2}$ non-amenable II$_1$ factor. This however contradicts solidity of $L(\G_{s_{n-1}})$.  
\end{proof}

\begin{lem}\label{wc-emb} Let $\G_1,\G_2,\ldots,\G_n\in \mathcal S_{nf}$ with $\G=\G_1\times \G_2\times\cdots\times\G_n$ and denote by $M=L(\G)$. If $t>0$ is a scalar and $Q, A\subset M^t$ are commuting subalgebras such that $A$ is diffuse amenable then $[M^t:A\vee Q]_{PP}=\infty$.   
\end{lem}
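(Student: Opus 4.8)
The plan is to assume $[M^t:A\vee Q]_{PP}<\infty$ and derive a contradiction from the structure theory of products of bi-exact groups.

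\emph{Consequences of finite index.} First I would observe that $A\vee Q$ is a factor: if $z_1,z_2\in\mathscr Z(A\vee Q)$ were nonzero orthogonal projections, then any nonzero partial isometry $v\in z_2M^tz_1$ (which exists since $M^t$ is a factor) satisfies $E_{A\vee Q}(v)=z_2E_{A\vee Q}(v)z_1=E_{A\vee Q}(v)z_1z_2=0$, forcing $[M^t:A\vee Q]_{PP}=\infty$; hence $\mathscr Z(A\vee Q)=\C 1$. Being a finite-index subfactor of the nonamenable II$_1$ factor $M^t$, $A\vee Q$ is nonamenable. Since $A$ and $Q$ commute, $A\vee Q$ is the image of the normal $*$-homomorphism $A\bar\otimes Q\to M^t$, $a\otimes b\mapsto ab$, hence a quotient of $A\bar\otimes Q$; as $A$ is amenable, $Q$ must therefore be nonamenable. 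In particular $A$ is a diffuse amenable subalgebra of $M^t$ with $A'\cap M^t\supseteq Q$ nonamenable.

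\emph{Locating $A$.} Each $\G_i\in\mathcal S$ is exact and bi-exact, so $\G=\G_1\times\cdots\times\G_n$ is bi-exact relative to $\mathcal H=\{\hat\G_i:1\le i\le n\}$ (\cite[Lemma 15.3.3]{BO08}). By the relative-commutant form of the structural dichotomy for bi-exact groups (\cite[Theorem 15.1.5]{BO08}; see also \cite{Oz05}, \cite[Theorem 3.2]{CS11}, \cite{PV12}), the diffuse amenable subalgebra $A$, having nonamenable relative commutant, must satisfy $A\preceq_{M^t}L(\hat\G_i)^t$ for some $i$. I would also record that $A\vee Q\not\preceq_{M^t}L(\hat\G_F)^t$ for every nonempty $F\subseteq\{1,\dots,n\}$: otherwise the finite-index hypothesis would give $M^t\preceq_{M^t}L(\hat\G_F)^t$, which is impossible because the diffuse subalgebra $L(\G_F)^t$ sits inside $M^t=L(\G_F)^t\bar\otimes L(\hat\G_F)^t$ and commutes with $L(\hat\G_F)^t$.

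\emph{Descent.} Starting from $A\preceq_{M^t}L(\hat\G_i)^t$ together with the finite-index condition, I would pass to corners and conjugate — transporting a corner of $Q$ along through the relative commutant — in the manner of the proof of Lemma \ref{reduction}, to obtain, inside an amplification $L(\hat\G_i)^{s}$ of the $(n-1)$-fold product, a diffuse amenable subalgebra together with a commuting subalgebra whose join has finite index; then \cite[Theorem 6.1]{CSU11} (the product structural theorem used in Lemma \ref{reduction}) peels off one factor at a time. After at most $n-1$ iterations one lands such a configuration inside an amplification $L(\G_j)^{s'}$ of a single factor; since $\G_j\in\mathcal S_{nf}$, $L(\G_j)$ is solid (\cite{Oz05,PV12}) and solidity is stable under amplification, so a diffuse amenable subalgebra with nonamenable relative commutant cannot exist there — the desired contradiction.

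\emph{Main obstacle.} The delicate point is the descent: one must guarantee that the ``nonamenable commutant'' genuinely persists through each intertwining rather than being absorbed into the factor being split off — the example $A_0\subset L(\G_1)$, $Q_0=L(\G_2)$ inside $L(\G_1\times\G_2)$ shows that without finite index this absorption really happens — and this is exactly where the finite-index hypothesis has to be re-invoked at every stage. This is the same technical difficulty met in the proof of Lemma \ref{reduction}, compounded here by the fact that it is only $A$, and not $A\vee Q$, that intertwines into $L(\hat\G_i)^t$, so that the commuting piece must be carried along by hand at each step.
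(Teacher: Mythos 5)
Your overall strategy is the same as the paper's (assume finite Pimsner--Popa index, locate $A$ inside a subproduct, descend one tensor factor at a time, and finish with solidity plus the amenability-under-finite-index argument), but the step you yourself flag as the ``main obstacle'' is precisely where your argument has a genuine gap, and you do not resolve it. The paper does \emph{not} perform the descent by conjugating a corner of $Q\vee A$ as in Lemma \ref{reduction}: that mechanism is unavailable here for the reason you note, namely that only $A$, not $A\vee Q$, satisfies $A\preceq_{M^k} L(\G_{F_1})^{k}$, and the intertwining $\ast$-homomorphism for $A$ alone gives no control whatsoever on where $Q$ (or the finite-index condition) goes --- the image of $A$ in $L(\G_{F_1})^{t_1}$ could a priori have amenable relative commutant there and infinite-index join. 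The missing engine is a finite-index transfer result: from $[M^t:A\vee Q]_{PP}<\infty$ together with $A\preceq_{M^{k}} L(\G_{F_1})^{k}$ one must produce a diffuse amenable $A_1\subset L(\G_{F_1})^{t_1}$ with $[L(\G_{F_1})^{t_1}: A_1\vee (A_1'\cap L(\G_{F_1})^{t_1})]_{PP}<\infty$; this is exactly \cite[Proposition 2.4]{CKP14}, which the paper invokes at every stage of the recursion (and also once more, implicitly, in Claim \ref{12} of Theorem \ref{product1}). Saying the commuting piece ``must be carried along by hand'' names the problem but is not a proof; without this lemma or an equivalent argument the iteration cannot start, let alone be repeated $n-1$ times. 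Note also that once the finite index is transferred, the nonamenability of the new commutant $Q_1=A_1'\cap L(\G_{F_1})^{t_1}$ is automatic (if it were amenable, $A_1\vee Q_1$ would be an amenable finite-index subalgebra, forcing $L(\G_{F_1})$ amenable by \cite[Lemma 2.3]{PP86} and \cite[Proposition 2.4]{OP07}), so the correct bookkeeping is to carry the index, not the nonamenability; this is also how the paper closes the argument at the last stage, rather than by asserting that a diffuse amenable subalgebra with nonamenable relative commutant ``cannot exist'' in the solid factor.

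Two smaller points. Your claim that $A\vee Q$ is a factor is both unnecessary and incorrectly argued: the Pimsner--Popa index is defined by an infimum over \emph{positive} elements, so exhibiting a partial isometry $v$ with $E_{A\vee Q}(v)=0$ does not force $[M^t:A\vee Q]_{PP}=\infty$ (a diagonal MASA in $M_n(\mathbb C)$ has vanishing expectation on off-diagonal partial isometries yet finite probabilistic index). Fortunately you only use it to get nonamenability of $Q$, which follows directly: if $Q$ were amenable then so would be $A\vee Q$, and finite index would make $M^t$ amenable by the \cite{PP86}/\cite{OP07} argument above. Finally, for ``locating $A$'' the paper simply applies \cite[Theorem 6.1]{CSU11} to the commuting pair $(A,Q)$ with $Q$ nonamenable, which is the cleaner route than the relative-commutant reading of \cite[Theorem 15.1.5]{BO08}; this difference is cosmetic, but the absent finite-index transfer is not.
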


\begin{proof} Assume by contradiction  $[M^t:A\vee Q]_{PP}<\infty$. Thus $Q$ is non-amenable and by \cite[Theorem 6.1]{CSU11} there exist a subset $F_1 \subset \{1,...,n\}$ with $|F_1|=n-1$ and an integer $t\leq k$ such that $A\preceq_{M^{k}} L( \G_{F_1})^{k}$. Using \cite[Proposition 2.4]{CKP14} one can find $k\geq t_1>0$ and diffuse amenable subalgebra $A_1\subset   L( \G_{F_1})^{t_1}$ so that $[L(\G_{F_1})^{t_1}: A_1 \vee Q_1]_{PP}<\infty$ where $Q_1= A_1'\cap L( \G_{F_1})^{t_1}$. Also note that $Q_1$ is non-amenable. Applying the same argument recursively, after $n-1$ steps  one can find $F_{n-1}\subset \{1,...,n\}$ with $|F_{n-1}|=1$, $t_{n-1}>0$, and diffuse amenable subalgebra $A_{n-1}\subset   L( \G_{F_{n-1}})^{t_{n-1}}$ so that $[L( \G_{F_{n-1}})^{t_{n-1}}: A_{n-1} \vee Q_{n-1}]_{PP}<\infty$, where $Q_{n-1}=A_{n-1}'\cap L( \G_{F_{n-1}})^{t_{n-1}}$. Since $L( \G_{F_{n-1}})$ is solid and $A_{n-1}$ is diffuse it follws $Q_{n-1}$ is amenable. Hence $A_{n-1}\vee Q_{n-1}$ is amenable and by \cite[Lemma 2.3]{PP86} and \cite[Proposition 2.4]{OP07} we conclude that $L( \G_{F_{n-1}})^{t_{n-1}}$ is amenable. By factoriality this further implies that  $\G_{F_{n-1}}$ is amenable which is a contradiction.
\end{proof}

\begin{theorem}\label{product1} Let $\G_1,\G_2,\ldots ,\G_n\in {\mathcal S}_{nf}$ be weakly amenable with $\G=\G_1\times \G_2\times\cdots\times\G_n$ and denote by $M=L(\G)$. Let $t>0$ be a scalar and let $\La$ be an arbitrary group such that $M^t =L(\La)$. Then there exist commuting, non-amenable, ICC subgroups $\Sigma_1,\Sigma_2<\La$ such that $[\La:\Sigma_1\Sigma_2]<\infty$.
\end{theorem}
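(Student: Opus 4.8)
The plan is to bootstrap Corollary \ref{int1} and the solidity lemmas into an honest (virtual) product decomposition of $\La$. Corollary \ref{int1} furnishes $1\leq \ell\leq n$ and a non-amenable subgroup $\Sigma<\La$ with non-amenable centralizer $\Omega=C_\La(\Sigma)$ so that $L(\hat\G_\ell)^t\preceq_{M^t}L(\Sigma)$; set $P_1=L(\Sigma)''$ and $P_2=L(\Omega)''$, which are commuting subalgebras of $M^t=L(\La)$ with $P_1$ and $P_2$ non-amenable. First I would analyze the pair $(P_1,P_2)$ using weak amenability: since each $\G_i$ is weakly amenable, $M$ (hence $M^t$ and all its corners) has the weak-$\ast$ completely bounded approximation property, and one wants to run the machinery of \cite{CSU11} to locate $P_1\vee P_2$ (after a unitary conjugation and cutting by a central projection) inside some $L(\hat\G_s)^k$. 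The key point is that a genuinely commuting, non-amenable pair cannot ``spread across'' all $n$ factors: applying \cite[Theorem 6.1]{CSU11} repeatedly (as in Lemma \ref{reduction}) drives one down toward a single factor $L(\G_{s_0})$, and solidity of $L(\G_{s_0})$ forces a collapse unless one of the two commuting algebras was already trapped. The upshot should be: after conjugating by a unitary, $P_1$ and $P_2$ can be arranged so that (up to finite index and amplification) $P_1\subseteq L(\hat\G_\ell)^{t}$ and $P_2\subseteq L(\G_\ell)^{t}$, i.e.\ $P_1\vee P_2$ is (virtually) all of a tensor-splitting $L(\hat\G_\ell)^{t}\bar\otimes L(\G_\ell)^{t}=M^t$.

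The second stage is to show that the intertwining $L(\hat\G_\ell)^t\preceq L(\Sigma)$, together with the fact that $L(\Sigma)$ commutes with the non-amenable $L(\Omega)$, actually forces $\Sigma$ and $\Omega$ to generate a finite-index subgroup of $\La$. Here I would invoke Proposition \ref{fin-index1}: the intertwining of $L(\hat\G_\ell)^t$ into $L(\Sigma)$ gives, via a Pimsner–Popa basis argument and Theorem \ref{basic}(5), a corner of $L(\hat\G_\ell)^t$ sitting with finite index over (a corner of) $L(\Sigma)$; combined with the complementary estimate coming from $L(\Omega)\subseteq L(\Sigma)'\cap M^t$ and the fact that $[M^t:L(\hat\G_\ell)^t\bar\otimes\C]=\infty$ is impossible once $L(\Omega)$ is pinned down, one extracts that $[M^t:L(\langle\Sigma,\Omega\rangle)]_{PP}<\infty$. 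Lemma \ref{wc-emb} is what guarantees the relevant algebras are non-amenable at each stage so the reduction does not terminate prematurely. Then Proposition \ref{fin-index1}, applied to $\langle\Sigma,\Omega\rangle\leqslant\La\leqslant\La$ (with a suitable projection witnessing the finite Pimsner–Popa basis of $L(\langle\Sigma,\Omega\rangle)\subseteq L(\La)$), yields $[\La:\langle\Sigma,\Omega\rangle]<\infty$, and since $\Sigma$ and $\Omega$ commute, $\langle\Sigma,\Omega\rangle=\Sigma\Omega$.

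It remains to upgrade ``non-amenable'' to ``ICC'' for the two factor groups. Set $\Sigma_1=\Sigma$, $\Sigma_2=\Omega=C_\La(\Sigma)$; replacing $\Sigma_1$ by $C_\La(\Sigma_2)\supseteq\Sigma_1$ only enlarges it while keeping $[\Sigma_1:\Sigma_1]$-commutation, so we may assume $\Sigma_1=C_\La(\Sigma_2)$ and $\Sigma_2=C_\La(\Sigma_1)$ are each other's centralizers inside the finite-index subgroup $\Sigma_1\Sigma_2$. If $\Sigma_i$ had a nontrivial finite conjugacy class, its FC-center would be a nontrivial finite normal subgroup $F\lhd\Sigma_i$; then $L(\Sigma_i)$ would have nontrivial finite-dimensional center or, after passing to $L(\Sigma_i/F)$, one would contradict the identification of $L(\Sigma_i)$ (up to finite index and amplification) with the II$_1$ \emph{factor} $L(\G_{\ell})^t$ or $L(\hat\G_\ell)^t$ obtained in the first stage — a II$_1$ factor corner cannot have a nontrivial finite-dimensional direct summand coming from a finite normal subgroup. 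Hence both $\Sigma_1,\Sigma_2$ are ICC, finishing the proof.

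\medskip
\noindent\emph{Main obstacle.} The delicate step is the first stage: turning the single intertwining $L(\hat\G_\ell)^t\preceq_{M^t}L(\Sigma)$ plus ``commutes with something non-amenable'' into the statement that $P_1\vee P_2$ fills out an \emph{entire} tensor-factor splitting of $M^t$ up to finite index. This requires carefully iterating the relative strong solidity / structural theorem \cite[Theorem 6.1]{CSU11} along all coordinates — keeping track of amplifications $t_i$, choosing the correct projections at each stage, and ruling out (via Lemma \ref{reduction} and Lemma \ref{wc-emb}) the degenerate cases where the reduction collapses into a single solid factor. Managing the amplification bookkeeping so that the final $t_i$ multiply to $t$ is routine but must be done with care.
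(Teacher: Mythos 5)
You have a genuine gap, and it sits exactly where the paper has to work hardest. Throughout your sketch you use the honest centralizer $C_\La(\Sigma)$ and assert at the end that $[\La:\Sigma\, C_\La(\Sigma)]<\infty$ with $\Sigma_1=\Sigma$, $\Sigma_2=C_\La(\Sigma)$. But the von Neumann algebraic data you can actually produce only controls the relative commutant $L(\Sigma)'\cap M^t$, and this algebra is supported on the \emph{virtual} (FC-) centralizer $\Omega=\{\la\in\La : |\mathcal O_\Sigma(\la)|<\infty\}$, not on $C_\La(\Sigma)$: averages of group unitaries over finite $\Sigma$-conjugation orbits lie in $L(\Sigma)'\cap M^t$ but have no reason to lie in $L(C_\La(\Sigma))$. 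Hence the Pimsner--Popa basis plus Proposition \ref{fin-index1} step in your second stage can only be run for the inclusion $L(\Omega\Sigma)\subseteq L(\La)$ (this is the paper's Claim \ref{13}, giving $[\La:\Omega\Sigma]<\infty$), and it yields no control whatsoever on $[\La:\Sigma\,C_\La(\Sigma)]$. At that point $\Sigma$ merely \emph{normalizes} $\Omega$ and acts on it with finite orbits; it does not commute with it, and nothing forces the honest centralizer to be large. So your proposal, as written, never produces a commuting pair generating a finite-index subgroup.

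What is missing is precisely the paper's mechanism for upgrading ``virtually commuting'' to ``commuting'': (i) $\Sigma\cap\Omega$ is finite (Claim \ref{9}) --- this is the \emph{only} place weak amenability enters, through the relative strong solidity theorem of \cite{PV12} applied inside $L(\Sigma)z$, whereas your appeal to weak amenability via \cite{CSU11} is misplaced since that theorem is already used elsewhere without it; (ii) a uniform bound on the $\Sigma$-orbits in $\Omega$ (Claim \ref{14}, via the finite basis and the decay argument of Corollary \ref{decay1}); (iii) the bootstrapping of Claims \ref{24}--\ref{26}, which extracts a finite-index subgroup $\Theta$ of a finite-index subgroup of $\Sigma$ whose $\Omega$-orbits are uniformly bounded, so that $[\Theta,\Omega]$ is a finite normal subgroup of the finite-index subgroup $\Omega\Theta$ of the ICC group $\La$ and is therefore trivial; the commuting pair is $(\Theta,\Omega)$, not $(\Sigma,C_\La(\Sigma))$. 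Two further remarks: your first stage overshoots --- you do not need, and cannot yet obtain, that $L(\Sigma)\vee L(C_\La(\Sigma))$ fills out a tensor splitting of $M^t$ up to finite index (that identification is the content of the later Theorem \ref{split1}); all that is used here is the weaker statement (Claims \ref{12} and \ref{12.5}) that a corner of $L(\Sigma)\vee(L(\Sigma)'\cap M^t)$ has finite Jones index in a corner of $M^t$. And your ICC upgrade is not correct as stated: a nontrivial finite normal subgroup of $\Sigma_i$ does not contradict the finite-index identification with a factor amplification; in the paper the relevant control comes from the ICC property of $\La$ together with finite index, not from factoriality of $L(\G_i)$.
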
 
 
\begin{proof} The proof is quite technically involved, so it will be divided into a series of claims. Throughout proof we will be denoting by $\{u_\la\}_{\la\in \La}$ the canonical group unitaries implementing $L(\La)=M^t$.  For simplicity, denote by $P={L(\hat\G_n)}^t$, $N=L(\G_n)$, and notice $M^t=P\bar\otimes N$.  Using Corollary \ref{int1} there exists non-amenable subgroup $\Sigma<\La$ with non-amenable centralizer $C_\La(\Sigma)$ such that $P\preceq_{M^t} L(\Sigma)$. Thus, there exist $ p\in \mathscr P(P)$, $q\in \mathscr P(L(\Sigma))$,  $ v\in \mathscr I(M^t)$, and an injective $\ast$-homomorphism $\phi: pPp\ra qL(\Sigma) q$ so that 
\begin{equation}\label{1}
\phi(x)v=vx\text{ for all }x\in pPp.
\end{equation} For ease of notation let  $Q=\phi(pPp)$.

\begin{claim}\label{12} Without loss of generality we may assume that $Q\subset qL(\Sigma)q$ is a finite index inclusion of II$_1$ factors.
\end{claim} 
\noindent \emph{Proof of Claim \ref{12}}.  By \cite[Proposition 2.4]{CKP14} we can assume that $[qL(\Sigma) q:Q\vee (Q'\cap qL(\Sigma) q)]_{PP}<\infty$. 

We first show that $Q'\cap qL(\Sigma) q$ is purely atomic. Assume by contradiction there exists a diffuse corner $r(Q'\cap qL(\Sigma) q)r$. Hence there exists a diffuse, abelian subalgebra $A\subseteq r(Q'\cap qL(\Sigma) q)r$. However, since $C_\La(\Sigma)$ is non-amenable and $L(C_\Lambda(\Sigma))$ commutes with $Q\vee A$, then \cite[Theorem 6.1]{CSU11} implies  $Q\vee A\preceq_{M^k} {L(\hat \G_s)}^k$, for some integers $1\leq s\leq n$ and  $k\geq t$, contradicting Lemma \ref{reduction}. 

As a consequence we note that multiplying $q$ above by a nonzero minimal, central projection $q'$ of $Q'\cap qL(\Sigma) q$ we assume w.l.o.g.\ that $[qL(\Sigma) q:Q]_{PP}<\infty$. Moreover, letting $0\neq q'v$ instead of $v$ above, one can check that all previous equations still hold, including (\ref{1}). Since $Q$ is a II$_1$ factor and $[qL(\Sigma) q:Q]_{PP}<\infty$, Theorem \ref{basic} (\ref{fdim}) implies that $Q' \cap qL(\Sigma) q$ is finite dimensional. Since $\mathscr Z(qL(\Sigma) q)\subseteq Q' \cap qL(\Sigma) q$, $\mathscr Z(qL(\Sigma) q)$ is finite dimensional as well. Thus, multiplying $v$ above by a minimal, central projection of $\mathscr Z(qL(\Sigma) q)$ and using Theorem \ref{basic} (\ref{same}) the claim obtains. $\hfill\blacksquare$
\vskip 0.03in

\begin{claim}\label{12.5}  There is a projection $f\in \mathscr P(L(\Sigma)' \cap M^t)$ such that \[f(L(\Sigma) \vee (L(\Sigma)'\cap M^t)) f\subseteq f M^t f\] is a finite Jones index inclusion of II$_1$ factors.
\end{claim}

Performing the downward basic construction \cite[Lemma 3.1.8]{Jo81}, there exists a projection $e\in \mathscr P(qL(\Sigma) q)$ and a II$_1$ subfactor $R\subseteq Q\subseteq qL(\Sigma) q=\langle Q,e\rangle$ such that $[Q:R]=[qL(\Sigma) q:Q]$ and $Re=eL(\Sigma) e$. Keeping with the same notation, by relation (\ref{1}) the restriction $\phi^{-1}: R\ra pPp$ is  an injective $\ast$-homomorphism such that $T=\phi^{-1}(R)\subseteq pPp$ is a finite Jones index subfactor and 
\begin{equation}\label{4}
\phi^{-1} (y)v^*=v^*y\text{, for all }y\in R.
\end{equation}  
Let $\theta':Re\ra R$ be the $\ast$-isomorphism given by $\theta(xe)=x$. Letting $w^*=v^*e$, then $Re=eL(\Sigma) e$ together with (\ref{4}) shows that $\theta=\phi^{-1}\circ\theta': eL(\Sigma) e\ra pPp$ is an injective $\ast$-homomorphism satisfying $\theta(eL(\Sigma )e)=T$ and  
\begin{equation}\label{3}
\theta (y)w^*=w^*y\text{, for all }y\in eL(\Sigma) e.
\end{equation} 
Notice that $w^*w\in(T'\cap pPp)\bar\otimes  N$ and proceeding as in the proof of \cite[Proposition 12]{OP03} one can further assume that $w^*w\in\mathscr Z(T' \cap pPp) \bar\otimes N$. Since $[pPp:T]<\infty$ then $T' \cap pPp$ is finite dimensional and so is $\mathscr Z(T' \cap pPp)$. Thus, replacing the partial isometry $w$ by $wr_0\neq 0$, for some minimal projection  $r_0\in  \mathscr Z(T' \cap pPp)$, we see that all relations above still hold including relation (\ref{3}). Moreover, we can assume that $w^*w= z_1\otimes z_2$, for some nonzero projections $z_1\in   \mathscr Z(T' \cap pPp)$ and $z_2\in N$. Using relation (\ref{3}) we get  \begin{equation}\label{5}w^* L(\Sigma) w=\theta (eL(\Sigma) e)w^*w= Tz_1 \otimes z_2.
\end{equation}
Since $T\subseteq pPp$ is finite index inclusion of II$_1$ factors then by the local index formula \cite{Jo81} it follows $Tz_1\subseteq z_1Pz_1$ is a finite index inclusion of II$_1$ factors as well. 
Also, we have 
 \begin{equation}\label{6}
( w^* L(\Sigma) w)' \cap (z_1\otimes z_2)M^t (z_1\otimes z_2)= ((Tz_1)'\cap z_1Pz_1) \bar\otimes z_2Nz_2.
 \end{equation}
 
 Altogether, the previous relations imply that
 \begin{equation}\label{6'}\begin{split}
 Tz_1\bar \otimes z_2Nz_2&\subseteq Tz_1\vee (Tz_1'\cap z_1Pz_1) \bar\otimes z_2Nz_2 \\&= w^*L(\Sigma) w\vee  w^*( L(\Sigma)'\cap M^t) w\\
 &=w^* L(\Sigma) w\vee \left((w^* L(\Sigma) w)' \cap (z_1\otimes z_2)M^t (z_1\otimes z_2)\right)\\
 &\subseteq z_1Pz_1\bar\otimes z_2Nz_2.
 \end{split}
 \end{equation} 
Since $Tz_1\subseteq z_1Pz_1$ if a finite index inclusion of II$_1$ factors then by Theorem \ref{basic} (4)  so is $Tz_1\bar\otimes z_2Nz_2\subseteq z_1Pz_1\bar\otimes z_2Nz_2 $.  Letting $f=ww^*$ and $u\in \mathscr U(M^t) $ be a unitary such that $w^*= uww^*=uf$, relation (\ref{6'}) further implies that $f(L(\Sigma) \vee (L(\Sigma)'\cap M^t)) f=L(\Sigma) f \vee  f(L(\Sigma) '\cap M^t)f\subseteq f M^t f$
is an inclusion of finite Pimsner-Popa index. Moreover (\ref{6'}) and Theorem \ref{basic} (6) further imply that $\dim_\mathbb C (\mathscr Z(f(L(\Sigma) \vee (L(\Sigma)'\cap M^t)) f))\leq [ z_1Pz_1\bar\otimes z_2Nz_2 : Tz_1\bar\otimes z_2Nz_2]<\infty$. Thus shrinking $f$ if necessary, we can assume that  \begin{equation}\label{6}f(L(\Sigma) \vee (L(\Sigma)'\cap M^t)) f\subseteq f M^t f\end{equation} 
 is a finite index inclusion of II$_1$ factors. $\hfill\blacksquare$
\vskip 0.03in

For the remainder of the proof it will be convenient to introduce a notation.

\begin{notation}\label{omega} Denote by $\Omega=\{ \la \in \Lambda \,:\, | \mathcal O_{\Sigma} (\la)|< \infty \}$ and notice it is a subgroup of $\La$ normalized by $\Sg$.   
Let $\{\mathcal O_i \,:\, i \in \mathbb N\}$ be a (countable) enumeration of all the finite orbits of action by conjugation of $\Sigma$ on $\La$ and notice $\Omega= \cup_i \mathcal O_i$. Let $I_k \subset \mathbb N$ be an ascending sequence of finite sets such that $\cup_k I_k = \mathbb N$. Note that $\Omega_k := \langle \cup_{i\in I_k}\mathcal O_i\rangle $ is an ascending sequence
of subgroups of $\Omega$ normalized by $\Sg$ such that $\cup_k \Omega_k =\Omega$.
\end{notation} 

Next, we show the following:

\begin{claim}\label{13} $[\La:\Omega\Sg]<\infty$.
\end{claim}  
\noindent \emph{Proof of Claim \ref{13}}. By construction we have $L(\Sigma)'\cap M^t\subseteq L(\Omega)$ and hence $ f(L(\Sigma) \vee (L(\Sigma)'\cap M^t)) f\subseteq f L(\Omega\Sg)f$. Proceeding as in the proof of (\ref{6}), shrinking $f$ more if necessary,  we obtain that $f L(\Omega\Sg)f\subseteq fM^t f=fL(\Lambda) f$ is a finite index inclusion of II$_1$ factors. By Theorem \ref{basic} (5) it follows that the inclusion $f L(\Omega\Sg)f\subseteq fM^t f=fL(\Lambda )f$  has a finite Pimsner-Popa basis and hence the claim follows from Proposition \ref{fin-index1}.$\hfill\blacksquare$
\vskip 0.1in

\begin{claim}\label{9}$\Sigma \cap \Omega$ is finite.
\end{claim}

\noindent \emph{Proof of Claim \ref{9}}. 
Let $\mathcal O'_i =\mathcal O_i\cap \Sigma$ and
notice $\Sigma \cap \Omega= \cup_i \mathcal O'_i$. For each $k$ let $R_k = \langle \cup_{i\in I_k}\mathcal O'_i\rangle $ and notice it forms an ascending sequence
of normal subgroups of $\Sigma$ such that $\cup_k R_k =\Sigma \cap \Omega$ and $[\Sigma:\Sigma_k]<\infty$, where $\Sigma_k=C_\Sigma(R_k)$. Since $R_k\cap \Sigma_k$ is abelian and $[\Sigma:\Sigma_k]<\infty$ it follows that $R_k$ is virtually abelian; thus, $\Sigma \cap \Omega$ is a normal amenable subgroup of $\Sigma$. 

From {\bf Claim \ref{12}} we have obtained that $Q\subseteq qL(\Sg) q$ is a finite index inclusion of non-amenable II$_1$ factors. Denoting by $z=z(q)$, the central support of $q$ in $L(\Sg)$, we see that $L(\Sg) z$ is a non-amenable II$_1$ factor. Moreover there exists a scalar $s>0$ such that 
$(qL(\Sg) q)^s= L(\Sg) z$. By above $Q^s\subseteq (qL(\Sg) q)^s=L(\Sg) z$ is a finite index inclusion of non-amenable II$_1$ factors. Perform the basic construction $Q^s\subseteq L(\Sg) z\subseteq \langle L(\Sg) z, e_{Q^s}\rangle $ and notice that $\langle L(\Sg) z, e_{Q^s}\rangle= Q^\mu$ where $\mu=s[qL(\Sg) q:Q]^2$.

Finally we show $\Sigma \cap \Omega$ is finite. Notice the normalizer 
 $\mathscr N_{L (\Sg) z}(L(\Sigma \cap \Omega)z))''=L(\Sg) z$ has finite index in $Q^\mu$. From (\ref{1}) we have  $Q^\mu=N^{{t_1}/k}$ where $
 N=L(\hat \G_n) \bar\otimes M_k(\mathbb C)$ with  $t_1=\tau(p)\mu$ and $t_1\leq k\in N$.  Thus we can write $Q^\mu = N^r$ where $N=L(\hat \G_n) \bar\otimes M_k(\mathbb C)$, for some $k\in \mathbb N$ and $0<r< 1$.   $L(\Sigma) z$ is a factor  and $\G_s\in \mathcal S_{nf}$, for all $1\leq s\leq n-1$ then \cite[Theorem 1.6]{PV12} implies that $L(\Sg\cap \Omega)z\preceq^s_N L(\hat \G_{\{n,s\}})\bar \otimes M_k(\mathbb C)$, for every $1\leq s\leq n-1$. Thus using \cite[Lemma 2.5]{Va10} in the terminology therein we have $(L(\Sg\cap \Omega)z)_1 \subset_{approx} L(\hat \G_{\{n,s\}})\bar \otimes M_k(\mathbb C)$ for each $1\leq s\leq n-1$. Then by \cite[Lemma 2.7]{Va10} we further have that $(L(\Sg\cap \Omega)z)_1 \subset_{approx}  M_k(\mathbb C)$ and hence a corner of $L(\Sigma \cap \Omega)z$ is purely atomic. This together with Proposition \ref{fin-index1} implies that $\Sg\cap \Omega$ is finite. $\hfill\blacksquare$

 \begin{claim}\label{14} There exists $\ell$ such that $\Omega\subseteq \{\la\in \La \,:\, |\mathcal O_\Sg(\la)|\leq \ell\}$.
 \end{claim}
 
\noindent \emph{Proof of the Claim  \ref{14}}. By (\ref{6}) one can find a finite Pimsner-Popa basis $m_1,m_2,\ldots,m_s\in fM^tf$ such that for every $x\in fM^tf$ we have $x=\sum_i E_{f(L(\Sigma)\vee L(\Sigma)'\cap M^t)f}(xm^*_i)m_i$. Thus for all $x\in fM^tf$ we have  $\|x\|^2_{2,f}=\sum_i \|E_{f(L(\Sigma)\vee L(\Sigma)'\cap M^t)f}(xm^*_i)\|^2_{2,f}$ and hence for all $x\in fL(\Omega\Sigma)f$ we have $\|x\|^2_{2,f}=\sum_i \|E_{f(L(\Sigma)\vee L(\Sigma)'\cap M^t)f}(xn^*_i)\|^2_{2,f}$,  where $n_i= E_{fL(\Omega\Sigma)f}(m_i)$. 

Together with basic approximations of $n_i$'s, this show that for every $\ve>0$ there exist $c_\ve>0$ and a finite subset $L_\ve\subset \Omega$ such that for each $x\in fL(\Omega\Sigma)f$ we have 
\begin{equation}\label{7}\begin{split}
\tau(f)^{-1}\tau(fxfx^*)=\|x\|^2_{2,f} \leq\ve + c_\ve \tau(f)^{-1}\sum_{s\in L_\ve} \|E_{L(\Sigma)\vee L(\Sigma)'\cap M^t}(x u_s)\|^2_{2}. 
\end{split}
\end{equation}
Observe that for every $x\in L(\Omega)$ we have $E_{L(\Sigma)\vee L(\Sigma)'\cap M^t}(x )=E_{L(\Sigma\cap \Omega)\vee L(\Sigma)'\cap M^t}(x )$, noting that for every $x\in L(\Omega), a\in L(\Sigma), b\in L(\Sigma)'\cap M^t$ we have 

\begin{equation}\begin{split}
&\tau(E_{L(\Sigma)\vee L(\Sigma)'\cap M^t}(x)ab)=  \tau(xab)=\tau(aE_{L(\Sigma)}(bx))\\
&=\tau(aE_{L(\Sigma\cap \Omega)}(bx))=\tau(bxE_{L(\Sigma\cap \Omega)}(a))=\tau(bxE_{L(\Sigma\cap \Omega)\vee L(\Sigma)'\cap M^t}\circ E_{L(\Sigma\cap \Omega)}(a))\\
&=\tau(bxE_{L(\Sigma\cap \Omega)\vee L(\Sigma)'\cap M^t}\circ E_{L( \Omega)}(a))=\tau(bxE_{L(\Sigma\cap \Omega)\vee L(\Sigma)'\cap M^t}(a))\\
&=\tau(xE_{L(\Sigma\cap \Omega)\vee L\Sigma'\cap M}(ab))=\tau(E_{L(\Sigma\cap \Omega)\vee L(\Sigma)'\cap M^t}(x)ab)
\end{split}
\end{equation}
This formula together with (\ref{7}) gives that for every $\ve>0$ there exist $c_\ve>0$ and a finite subset $L_\ve\subset \Omega$ such that for all $x\in L(\Omega)$ we have 
 \begin{equation}\label{8}\begin{split}
 \tau(fxfx^*)\leq\ve + c_\ve  \sum_{s\in L_\ve} \|E_{L(\Sigma\cap\Omega)\vee (L(\Sigma)'\cap M^t)}(x u_s)\|^2_{2}. 
\end{split}
\end{equation}

By {\bf Claim \ref{9}}, the group $\Sigma\cap \Omega$ is finite and hence $L(\Sigma\cap\Omega)\vee (L(\Sigma)'\cap M^t)$ admits a finite Pimsner-Popa basis over $L(\Sigma)'\cap M^t$. Approximating the elements in this basis together with (\ref{8}) show that for every $\ve>0$ there exist $d_\ve>0$ and a finite subset $R_\ve\subset \Omega$ such that for every $x\in L(\Omega)$ we have 
 \begin{equation}\label{10}\begin{split}
 \tau(fxfx^*)\leq\ve + d_\ve \sum_{s\in R_\ve} \|E_{L(\Sigma)'\cap M^t}(x u_s)\|^2_{2}. 
\end{split}
\end{equation}
Setting up $S= \Omega$ and $\phi_\ve(x)=  d_\ve  \sum_{s\in L_\ve} \|E_{L(\Sigma)'\cap M^t}(x u_s)\|^2_{2}$, we see (\ref{10}) shows that part (\ref{0.1.1}) in Corollary  \ref{decay1} is satisfied. 

We will now show that if there are elements in $\Omega$ whose orbits under conjugation by $\Sigma$ have arbitrarily large size, this would imply that $\phi_\ve$ satisfies part (\ref{0.1.2}) in Corollary  \ref{decay1} as well; hence, we would have that $f=0$, a contradiction.

To this end fix $\delta>0$ and a finite subset $F\subset \Omega$.  For every $\sigma\in \Omega$ we have $E_{L(\Sigma)'\cap M^t}(u_\sigma)=|\mathcal O_\Sg(\sigma)|^{-1}\sum_{\la\in\mathcal O_\Sg(\sigma)}u_\la$. Since for all $t,v\in \Omega$ we have $|\mathcal O_\Sg(vt)|\leq |\mathcal O_\Sg(v)| |\mathcal O_\Sg(t)|$, the set $F R_\ve$ is  finite, and there exist elements in $\Omega$ whose orbits under conjugation by  $\Sigma$ have arbitrarily large size, then one can find $\sigma \in \Omega $ such that  $|\mathcal O_\Sg(\sigma^{-1}  \mu s)|\geq \delta^{-1} d_\ve |R_\ve|$, for all $\mu\in F$, $s\in R_\ve$. Thus for every $\mu \in F$ we have

 \begin{equation}\label{8.8}\begin{split}
\phi_\ve(u^*_\sigma u_\mu) &=  d_\ve \sum_{s\in R_\ve} \|E_{L(\Sigma)'\cap M^t}(u_{\si^{-1}\mu s})\|^2_{2} = d_\ve  \sum_{s\in R_\ve} |\mathcal O_\Sg(\sigma^{-1}  \mu s)|^{-1} \leq \delta.
\end{split}\end{equation} 
This finishes the proof of Claim \ref{14}. $\hfill\blacksquare$

\vskip 0.05in

\begin{claim}\label{24} For $k\in \bb N$, let $\Sigma_k := C_{\Sigma}(\Omega_k)$. There exist $\kappa\in \mathbb N$, $F\subset \Omega_{\kappa}$, and $C>0$ such that for every $\sigma\in \Sigma_{\kappa}$ there exists $s\in F$ such that $|\Cal O_\Omega(s\sigma)|\leq C$.  
 \end{claim}
\noindent \emph{Proof of the Claim  \ref{24}}. {\bf Claim \ref{14}} implies  $\|E_{L(\Sigma)'\cap M^t}(u_\la)\|^2_2= \||\mathcal O_\Sg(\la)|^{-1}\sum_{\mu\in\mathcal O_\Sg(\la)}u_\mu \|^2_2=|\mathcal O_\Sg(\la)|^{-1}\geq \ell^{-1}$, for all $\la \in \Omega$. Since $\Omega$ generates $L(\Omega)$, Popa's intertwining techniques  further imply $L(\Omega)\preceq_{L(\Omega)}L(\Sigma)'\cap M^t$. Thus, one can find $c\in \mathscr P(L(\Omega))$, $d\in \mathscr P(L(\Sigma)'\cap M^t)$, $w_1\in \mathscr I(L(\Omega))$, and a $\ast$-homomorphism $\alpha: cL(\Omega)c\ra d(L(\Sigma)'\cap M^t)d$ so that  \begin{equation}\label{17}\alpha(x)w_1=w_1 x\text{, for all }x\in cL(\Omega)c.\end{equation}  Let $u'\in \mathscr U(L(\Omega))$ and $r_0\in \mathscr P(cL(\Omega)c)$ so that $w_1=u'r_0$. Denoting by $B= \alpha(cL(\Omega)c)\subset d(L(\Sigma)'\cap M^t) d$ and $r_1=u'r_0(u')^*\in B'\cap cL(\Omega)c$,  relation (\ref{17}) implies $r_1L(\Omega)r_1=Br_1$. 

Fix $0<\ve<1$. Since $\Omega=\cup_k\Omega_k$ there exist $k_\ve\in \mathbb N$ and a projection $p_\ve\in L(\Omega_{k_\ve})$ so that 
\begin{equation}\label{18}\begin{split}
& \| p_\ve-r_1\|_2 \leq \ve.
\end{split}
\end{equation} 
Note that $[\Sg:\Sigma_{k_\ve}]<\infty$. Then (\ref{18}) together with $[B,L(\Sigma_{k_\ve})]=0$ shows that for all $x\in L(\Omega)$ and $y\in L(\Sigma_{k_\ve})$ we have $\| r_1 x r_1 y-yr_1 x r_1\|_2\leq 2\ve$. Altogether, these properties show that for every $\ve>0$ there exists $k_\ve \in \mathbb N$  such that for all $x\in \mathscr U(L(\Omega))$ and $y\in \mathscr U(L(\Sigma_{k_\ve}))$ we have $\tau (r_1 x r_1 x^*)=\| r_1 x  r_1\|^2_2 \leq 6\ve +   Re \tau((r_1 y)^* x r_1 y x^* )$. This further implies that
 \begin{equation}\label{19}\begin{split}\tau((ar_1 a^*)( b r_1  b^*))\leq 6\ve + Re \tau((a(r_1 y)^* a^*)( b r_1 y b^*))\text{ for all }a,b\in \mathscr U(L(\Omega)).
 \end{split}
\end{equation}
Consider a sequence of convex combinations $\xi_n= \sum^{k_n}_{i=1} \la_i b_i r_1 b^*_i$ which WOT-converges to $E_{L(\Omega)'\cap M^t}(r_1)$. Denote by $\eta_n=\sum^{k_n}_{i=1} \la_i b_i r_1 y b^*_i$, and notice that after passing to a subsequence we can assume that $\eta_n$ WOT-converges to an element $y_1\in (M^t)_1$.  Inequality (\ref{19}) implies that 
$\tau(ar_1 a^*\xi_n)\leq 6\ve + \tau(a(r_1 y)^* a^*\eta_n)$ for all $n\in \mathbb N$ and $a\in \mathscr U(L(\Omega))$. Taking limit over $n$ we get $\tau(ar_1 a^* E_{L(\Omega)'\cap M^t}(r_1))\leq 6\ve + \tau(a(r_1 y)^* a^* y_1)$ for all $a\in \mathscr U(L(\Omega))$. Since $a^*E_{L(\Omega)'\cap M^t}(r_1)=E_{L(\Omega)'\cap M^t}(r_1)a^*$ the previous inequality gives
\begin{equation}\label{19'}
\begin{split}
\|E_{L(\Omega)'\cap M^t}(r_1)\|^2_2  =\tau(ar_1 a^* E_{L(\Omega)'\cap M^t}(r_1)) \leq 6\ve + Re \tau(a(r_1 y)^* a^* y_1),\text{ for all } a\in \mathscr U(L(\Omega)).
\end{split}
\end{equation}

Consider a sequence of convex combinations $\zeta_n= \sum^{l_n}_{i=1} \mu_i a_i r_1 y a^*_i$ which WOT-converges to $E_{L(\Omega)'\cap M^t}(r_1 y)$. Using (\ref{19'}) we get $\|E_{L(\Omega)'\cap M^t}(r_1)\|^2_2  \leq 6\ve + Re \tau(\zeta^*_n y_1)$ for all n, whence passing to the limit over $n$ we have  $\|E_{L(\Omega)'\cap M^t}(r_1)\|^2_2   \leq 6\ve + Re \tau(E_{L(\Omega)'\cap M^t}((r_1 y)^*) y_1)$. Using the Cauchy-Schwarz inequality and $\|y_1\|_\infty \leq 1$ this further implies that 
\begin{equation}\label{19''}
\|E_{L(\Omega)'\cap M^t}(r_1)\|^2_2   \leq 6\ve + \|E_{L(\Omega)'\cap M^t}(r_1 y)\|_2,\text{ for all }y\in \mathscr U(L(\Sigma_{k_\ve} )).
\end{equation}

Using (\ref{18}) and (\ref{19''}), for every $\ve>0$ there exist $k_\ve \in \mathbb N$ and a finite subset $K_\ve\subset \Omega_{k_\ve}$ such that  
\begin{equation}\label{23}\begin{split}
\|E_{L(\Omega)'\cap M^t}(r_1)\|^2_2   \leq 8\ve +  \sum_{s\in K_\ve}\|E_{L(\Omega)'\cap M^t}(u_s y)\|_2,\text{ for all }y\in \mathscr U(L(\Sigma_{k_\ve} )).
\end{split}
\end{equation}

 We are now ready to prove the claim. Fix $\ve>0$.  Using (\ref{23}) there exist $k_\ve \in \mathbb N$ and a finite subset $K_\ve\subset \Omega_{k_\ve}$ such that for all $\sigma \in \Sigma_{k_\ve}$ we have \begin{equation}\label{24'}
\|E_{L(\Omega)'\cap M^t}(r_1)\|^2_2   \leq 8\ve +  \sum_{s\in K_\ve}\|E_{L(\Omega)'\cap M^t}(u_{s\sigma})\|_2=  8\ve +  \sum_{s\in K_\ve} |\Cal O_\Omega(s\sigma)|^{-1/2}.
\end{equation}
Assume by contradiction the claim does not hold; hence, for every $k\in \mathbb N$, $F\in \Omega_k$ and $C>0$ there exists  $\sigma\in \Sigma_{k}$ such that for all $s\in F$ we have  $|\Cal O_\Omega(s\sigma)|\geq C$. Since $K_\ve$ is finite one can find $\sigma \in \Sigma_{k_\ve}$ such that  $\sum_{s\in K_\ve} |\Cal O_\Omega(s\sigma)|^{-1/2}<\ve$, by (\ref{24'}) we have  $\|E_{L(\Omega)'\cap M^t}(r_1)\|^2_2\leq 9\ve$. As $\ve>0$ is arbitrary we get 
$r_1=0$ which is a contradiction. $\hfill\blacksquare$

\begin{claim}\label{25} There exist $R>0$ and a finite index subgroup $\Theta\leqslant\Sigma_{\kappa}$, such that $\Theta\subseteq \{ \la\in \La \,:\, |\Cal O_\Omega(\la)|\leq R\}$.  
 \end{claim}
 
\noindent \emph{Proof of the Claim  \ref{25}}. Using {\bf Claim \ref{24}} there exist $D>0$ and a finite cover  $\cup^r_{i=1} K_i=\Sigma_{\kappa} $  such that $\cup_i f_i K_i  \subseteq \{ \la\in \La \,:\, |\Cal O_\Omega(\la)|\leq D\}$ where $\{f_1,f_2,...,f_r\}=F$. Using  the inequality $|\mathcal O_\Omega(vt)|\leq |\mathcal O_\Omega(v)| |\mathcal O_\Omega(t)|$ for all $t,v\in \Omega$ one can find $s_i \in \Sigma_{\kappa}$ for $1\leq i\leq r$ such that $\cup_i s_i K_i  \subseteq \{ \la\in \La \,:\, |\Cal O_\Omega(\la)|\leq D^2\}$. Considering the subgroups  $\Theta_i=\langle s_iK_i \rangle  \leqslant\Sigma_{\kappa}$, the previous relations imply that $\cup^r_{i=1} \Theta_i\subseteq   \{ \la\in \La \,:\, |\Cal O_\Omega(\la)|<\infty\}$ and $\cup^r_{i=1} s^{-1}_i\Theta_i = \Sigma_{\kappa}$. Thus at least one the subgroups $\Theta:=\Theta_i\leqslant\Sigma_{\kappa}$ has finite index. Denoting by $\tilde K_i=K_i\cap \Theta$ we still have $\cup_i f_i\tilde K_i  \subseteq \{ \la\in \La \,:\, |\Cal O_\Omega(\la)|\leq D\}$ and $\cup^r_{i=1} \tilde K_i=\Theta $ and as before there exist $t_i\in \Theta$ so that  $\cup_i t_i \tilde K_i  \subseteq \{ \la\in \La \,:\, |\Cal O_\Omega(\la)|\leq D^2\}$. Since $\Cal O_\Omega(t^{-1}_i)<\infty$, the previous containment shows that $\Theta \subseteq \{ \la\in \La \,:\, |\Cal O_\Omega(\la)|\leq R\}$ where $R= D^2 \max_{i=1,r} | \Cal O_\Omega(t^{-1}_i)| $.   $\hfill\blacksquare$ 

\begin{claim}\label{26} If we denote by $\Omega' =\{ \la \in \La \,:\, |\Cal O_\Omega(\la)|<\infty\}$ then $\Theta \leqslant \Omega'$ has finite index. 
 \end{claim}

\noindent \emph{Proof of the Claim  \ref{26}}. Assume by contradiction that $\{ h_i\}$ is a infinite sequence of representatives of distinct right cosets of $\Omega'$ in $\Theta$. Since $\Theta \leqslant\Sigma$ and  $\Omega\Sg\leqslant \La$ are finite index and $\Sigma$ normalizes $\Omega$ if follows that $\Omega\Theta\leqslant \La$ is also finite index. Thus passing to an infinite subsequence of $\{h_i\}$ one can find $\la\in \La$ and $x_i\in \Omega\Theta$ so that $h_i=x_i\lambda$ for all $i\geq 1$. Thus $h_ih^{-1}_1\in \Omega\Theta$ for all $i\geq 2$. Hence one can find $\sigma_i\in \Theta$ and $\omega_i\in \Omega\cap \Omega'$ so that $ h_ih^{-1}_1=\sigma_i\omega_i$. From construction we have $\Theta \omega_i\neq \Theta \omega_j$ for all $i\neq j$. Since $\Theta\leqslant \Sigma$ one can check that $|\Cal O_{\Omega\Theta}(\omega_i)|<\infty$ for all $i$ and since $\Omega\Theta\leqslant\La$ has finite index we further have $|\Cal O_{\La}(\omega_i)|<\infty$ for all $i\geq 2$. This however contradicts the fact that $\La$ is ICC.  $\hfill\blacksquare$ 
\vskip 0.05in

We are now ready to complete the proof of the theorem. Combining  {\bf Claims \ref{9}} and {\bf \ref{26}} it follows that $\Omega'\cap \Omega$ is a finite group. Since $\Omega$ normalizes $\Omega '$ and $\Theta$ normalizes $\Omega$, for all $\sigma \in \Theta$, $\omega\in \Omega$ the words $\omega \sigma\omega^{-1}\sigma^{-1},\sigma\omega\sigma^{-1}\omega^{-1}\in \Omega'\cap \Omega$; hence the commutator $[\Theta, \Omega]\leqslant \Omega'\cap \Omega$. In particular $[\Theta, \Omega]$ is a finite normal subgroup of $\Omega\Theta$.  Since $\Omega\Theta$ has finite index in $\La$ and the latter is ICC it follows that $[\Theta, \Omega]=1$. Letting $\Sigma_1:=\Theta$ and $\Sigma_2:=\Omega$ we get our conclusion.\end{proof}

\begin{Remarks} If in the previous theorem we assume $\La$ is finitely generated, since $[\La: \Omega\Sigma]<\infty$, it follows that  $\Omega\Sigma$ is finitely generated as well.   Since from construction we have that $\Omega\Sigma_k$ is an increasing tower of subgroups such that $\cup_k \Omega\Sigma_k=\Omega\Sigma$, by finite generation, there exists $\ell \in \mathbb N$ such that $\Omega\Sigma_\ell =\Omega\Sigma$. Hence $\Sigma_\ell= C_\Sigma(\Omega_\ell)$ is a finite index subgroup of $\Sigma$ which commutes with $\Omega_\ell$. Thus $\Sigma_\ell\Omega_\ell$ has finite index in $\La$ and the conclusion of the theorem follows. Therefore in this situation one needs neither the weak amenability assumption nor the {\bf Claims 4.9-4.12}. This would be the case if we assume for instance that the groups $\G_i$'s  belong to class $S_{nf}$ and have property (T).    

\end{Remarks}


We require one more intermediate result before the proof of Theorem \ref{main3}.

\begin{theorem}\label{split1} Let $\G_1,\G_2,\ldots ,\G_n\in {\mathcal S}_{nf}$ which are all weakly amenable. Let $\G=\G_1\times \G_2\times\cdots\times\G_n$, and denote by $M=L(\G)$. Let $t>0$ be a scalar and let $\La$ be an arbitrary discrete group such that $M^t =L(\La)$. Then one can find subgroups $\Phi_1,\Phi_2<\La$ with $\Phi_1\times \Phi_2=\La$, a scalar $s>0$, a proper subset $F\subset\{1,\ldots,n\}$ and a unitary $v\in \mathscr U(M)$ such that $vL(\Phi_1)v^*= L(\G_F)^s$ and $vL(\Phi_2) v^*= (L(\G_{F^c}))^{t/s}$.     
\end{theorem}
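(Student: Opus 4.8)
The plan is to start from the output of Theorem \ref{product1}: we have commuting, non-amenable, ICC subgroups $\Sigma_1,\Sigma_2 < \La$ with $[\La : \Sigma_1\Sigma_2] < \infty$. At the von Neumann algebra level this gives commuting subfactors $L(\Sigma_1), L(\Sigma_2) \subseteq M^t = P\bar\otimes N$-style decomposition $M^t = L(\G_1)^{t_1}\bar\otimes\dotsb\bar\otimes L(\G_n)^{t_n}$. First I would run the intertwining/solidity machinery of \cite{CSU11, PV12} on the commuting pair $(L(\Sigma_1), L(\Sigma_2))$ against this tensor decomposition: since $\G_i \in \mathcal S_{nf}$ is weakly amenable, \cite[Theorem 6.1]{CSU11} forces, after passing to corners, that $L(\Sigma_1)\vee L(\Sigma_2) \preceq_{M^k} L(\G_F)^k \bar\otimes L(\G_{F^c})^k$ for a partition $\{1,\dotsc,n\} = F \sqcup F^c$, with $L(\Sigma_1)$ intertwining into one tensor leg and $L(\Sigma_2)$ into the other — this is the same recursive strategy used in Lemma \ref{reduction} and Lemma \ref{wc-emb}. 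The non-amenability of $\Sigma_1$ and $\Sigma_2$ guarantees $F$ is proper and nonempty, and (using that $L(\Sigma_1)\vee L(\Sigma_2)$ has finite index in $M^t$ up to corners, since $[\La:\Sigma_1\Sigma_2]<\infty$) the intertwinings can be promoted to honest identifications: after conjugating by a unitary $u \in \mathscr U(M)$ we obtain $uL(\Sigma_1)u^* \subseteq L(\G_F)^s$ and $uL(\Sigma_2)u^* \subseteq L(\G_{F^c})^{t/s}$ as finite-index subfactor inclusions, for a suitable scalar $s$.

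Next I would upgrade the \emph{finite index} of $\Sigma_1\Sigma_2$ in $\La$ to an actual \emph{internal direct product} $\La = \Phi_1 \times \Phi_2$ with $\Phi_i \supseteq \Sigma_i$. The key point is that $L(\Sigma_1)$ and $L(\Sigma_2)$ each sit with finite index in a full tensor leg $L(\G_F)^s$, resp.\ $L(\G_{F^c})^{t/s}$, and these legs commute and together generate $M^t = L(\La)$. So $L(\Sigma_1) \vee L(\Sigma_2) \subseteq L(\La)$ has finite Jones index (after cutting by a projection). By Proposition \ref{fin-index1} applied to $\Sigma_1\Sigma_2 \leqslant \La \leqslant \La$ this re-derives $[\La:\Sigma_1\Sigma_2] < \infty$, but more is true: I would use the finite-index structure together with the factoriality of $L(\G_F)^s$ and $L(\G_{F^c})^{t/s}$ to identify $L(\G_F)^s$ with $L(\Phi_1)$ and $L(\G_{F^c})^{t/s}$ with $L(\Phi_2)$ for subgroups $\Phi_i < \La$. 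Concretely: let $\Phi_1 = \{\la \in \La : u v_\la u^* \in L(\G_F)^s\}$ and $\Phi_2$ defined analogously with $F^c$; these are subgroups, they commute (since the two tensor legs commute), $\Sigma_i \leqslant \Phi_i$, and because $M^t$ is generated by the two commuting legs one checks that every $\la \in \La$ has $v_\la$ decomposing along the tensor product — a standard Fourier-coefficient argument combined with the fact that $\Phi_1\Phi_2$ has finite index and $\La$ is ICC forces $\La = \Phi_1\Phi_2$ and $\Phi_1 \cap \Phi_2 = \{1\}$, i.e.\ $\La = \Phi_1\times\Phi_2$. Then automatically $uL(\Phi_1)u^* = L(\G_F)^s$ and $uL(\Phi_2)u^* = L(\G_{F^c})^{t/s}$, with $v = u^*$.

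The main obstacle I anticipate is the passage from ``$L(\Sigma_i)$ intertwines into the $i$-th tensor leg'' to ``the full tensor leg is the group von Neumann algebra of a subgroup of $\La$''. Intertwining only gives information on corners, so one must carefully chase projections, use the finite-index hypothesis $[\La:\Sigma_1\Sigma_2]<\infty$ to control central supports (as in Claims \ref{12} and \ref{12.5} of the previous proof), and then verify that the resulting identification of $L(\G_F)^s$ inside $L(\La)$ is actually implemented by a subset of the canonical group unitaries $\{v_\la\}$. This last step is where the ICC hypothesis on $\La$ and a commutation/normality argument analogous to Claim \ref{26} will be essential, and it is the delicate part of the argument; the rest is an orchestration of results already established in the paper (Theorem \ref{product1}, \cite[Theorem 6.1]{CSU11}, \cite[Theorem 1.6]{PV12}, Theorem \ref{basic}, Proposition \ref{fin-index1}).
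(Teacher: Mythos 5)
There is a genuine gap, and it sits exactly where you flagged the ``delicate part.'' First, your opening step already claims more than the available tools give: \cite[Theorem 6.1]{CSU11} (via Popa's intertwining) produces an intertwining of \emph{one} of the algebras $L(\Sigma_k)$ into some $L(\G_F)^r$ with $F$ proper; it does not simultaneously place $L(\Sigma_1)$ and $L(\Sigma_2)$ into complementary tensor legs, and upgrading $\preceq$ to an honest containment $uL(\Sigma_1)u^*\subseteq L(\G_F)^s$ requires the patching argument of \cite[Proposition 12]{OP03} (factoriality of the relevant commutants to replace a partial isometry by a unitary), which the paper carries out for one side only. The paper never proves, and does not need, a simultaneous finite-index embedding of both $L(\Sigma_i)$ into the two legs; instead it reaches the second leg through $L(\Sigma_1)'\cap M^t$ and the virtual centralizers $\Omega_2=\{\la:|\mathcal O_{\Sigma_1}(\la)|<\infty\}$, $\Theta_2$, $\Phi_1$. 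Second, even granting your simultaneous finite-index containments, your endgame does not close: from $uL(\Sigma_2)u^*\subseteq uL(\Phi_2)u^*\subseteq L(\G_{F^c})^{t/s}$ with finite index you cannot conclude $uL(\Phi_2)u^*=L(\G_{F^c})^{t/s}$, and the assertion that ``$[\La:\Phi_1\Phi_2]<\infty$ plus $\La$ ICC forces $\La=\Phi_1\Phi_2$'' is false as stated (ICC groups have plenty of proper finite-index ICC subgroups). In the paper the equality $\Phi_1\Theta_2=\La$ is not obtained from finite index at all; it follows only after one proves the \emph{exact} identifications $vL(\Theta_2)v^*=M_2^{s}$ and $vL(\Phi_1)v^*=M_1^{1/s}$, so that the two group subalgebras generate all of $L(\La)$.

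The missing ingredients that make those exact identifications possible are: (i) Ge's tensor splitting theorem \cite[Theorem A]{G96}, applied twice, to show that a subfactor of $M_1^\mu\bar\otimes M_2^{1/\mu}$ containing one full leg splits as $B\bar\otimes M_2^{1/\mu}$ (resp.\ $A\bar\otimes M_1^{1/s}$); (ii) the choice of $F$ with $|F|$ \emph{minimal}, which together with the non-amenability of $L(\Theta_1)$ and \cite[Theorem 6.1]{CSU11} forces the residual factor $B$ to be amenable; and (iii) solidity via Lemma \ref{wc-emb} to rule out $B$ diffuse, so $B=M_k(\mathbb C)$ is absorbed into the amplification, giving $vL(\Theta_2)v^*=M_2^{s}$ on the nose. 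The group-level splitting then comes from $\Phi_1\cap\Theta_2=1$ (using that $\Theta_2$ is ICC) and $A=vL(\Phi_1\cap\Theta_2)v^*=\C1$ in the second application of Ge's theorem. Without a mechanism of this kind (a splitting theorem plus a way to kill the residual tensor factor), defining $\Phi_i=\{\la: uv_\la u^*\in L(\G_F)^s\}$ and invoking Fourier coefficients cannot show that the legs are group von Neumann algebras of subgroups of $\La$, which is the actual content of the theorem.
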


\begin{proof}
From Theorem \ref{product1} there exist commuting, non-amenable, ICC subgroups $\Sigma_1,\Sigma_2<\La$ such that $[\La:\Sigma_1 \Sigma_2]<\infty$. Fix an integer $r>t$. Applying \cite[Theorem 6.1]{CSU11} there exist $1\leq k\leq 2$ and a smallest $1\leq i\leq n-1$ such that \begin{enumerate}
\item $L(\Sigma_k)\preceq_{M^r} {L(\G_F)}^r$ for some $F\subset\{1,\ldots, n\}$ with $|F|=i$; and 
\item $L(\Sigma'_k)\npreceq_{M^r} {L(\G_K)}^{r'}$, for all integers $r'\geq r$, all $K\subset \{1,\ldots, n \}$ with $|K|=i-1$, and all subgroups $\Sg_k'\leqslant \Sg_k$ with $[\Sigma_k:\Sigma'_k]<\infty$.\end{enumerate} For simplicity we assume throughout the proof that $k=1$. Denote by $M_1={L(\G_F)}^r$ and $M_2=L(\G_{F^c})^{t/r}$ and notice that $M^t=M_1\bar\otimes M_2$. Proceeding as in the proof of  \cite[Proposition 12]{OP03}  there exist a scalar $\mu>0$ and a partial isometry $u\in \mathscr I(M^t)$ satisfying $p=uu^*\in M^{1/\mu}_2$, $q=u^*u\in L(\Sigma_1)'\cap M^t$ and
\begin{equation}\label{4.1'}uL(\Sigma_1) u^*\subseteq M^\mu_1p.\end{equation} 

Consider the group $\Omega_2=\{ \la \in\La \,:\, |\mathcal O_{\Sigma_1}(\la) |<\infty \}$. Since $ \Sigma_2 \leqslant \Omega_2$ then from above it follows that $[\La: \Omega_2\Sg_1]<\infty$. Letting $\Omega_1=C_{\Sigma_1}(\Omega_2)$, it is a straightforward exercise to see that $\Omega_1,\Omega_2<\La$ are commuting, non-amenable, ICC subgroups  so that $[\La:\Omega_1 \Omega_2]<\infty$ and $[\Sg_1:\Omega_1]<\infty$. By construction we have $L(\Sigma_1) '\cap M^t \subseteq L(\Omega_2)$ and  the latter is a II$_1$ factor. Relation (\ref{4.1'}) gives $uL(\Omega_1) u^*\subseteq M^\mu_1p$. As in the proof of  \cite[Proposition 12]{OP03}, since $L(\Omega_2)$ and $M^{1/\mu}_2$ are factors, then for a large integer $m$ there exist $w_1, \ldots,w_m\in L(\Omega_2)$ and  $u_1,\ldots, u_m \in M^{1/\mu}_2$ partial isometries satisfying $w_i{w_i}^*=q'\leq q$, ${u_i}^*u_i=p'=uq'u^*\leq p$ for each $1\leq i\leq m$ and $\sum_j {w_j}^*w_j=1_{L(\Omega_2)}$, $\sum_j u_j{u_j}^*=1_{M^{1/\mu}_2}$. Combining with the above, one can check $v=\sum_j u_j uw_j\in M^t$ is a unitary satisfying $v L(\Omega_1)v^*\subseteq M^\mu_1$. This further implies that \begin{equation}\label{4.2'} v(L(\Omega_1)'\cap M^t)v^*\supseteq M^{1/\mu}_2.\end{equation}

Consider the groups $\Theta_2=\{ \la \in\La \,:\, |\mathcal O_{\Omega_1}(\la) |<\infty \}$ and $\Theta_1=C_{\Omega_1}(\Theta_2)$ and proceeding as before we have that $\Theta_1,\Theta_2 <\La$ are commuting, non-amenable, ICC subgroups  such that $[\La:\Theta_1 \Theta_2]<\infty$ and $[\Sg_1:\Theta_1]<\infty$.  Moreover, we have that $L(\Theta_2)\supseteq L(\Omega_1) '\cap M^t $ whence by (\ref{4.2'}) we have $vL(\Theta_2)v^*\supseteq M^{1/\mu}_2$. Since $M^t=M^\mu_1\bar \otimes M^{1/\mu}_2$ by \cite[Theorem A]{G96} we have that $vL(\Theta_2)v^*=B\bar \otimes M^{1/\mu}_2$, where $B\subseteq M^\mu_1$ is a factor. Thus $B$ and $vL(\Theta_1)v^*$ are commuting subfactors generating a finite index subfactor of $M^\mu_1$. Since  $ L(\Theta_1)\npreceq_M {L(\G_K)}^{r'}$, for $r'\geq r$ and all $K\subset \{1,\ldots, n \}$ with $|K|=i-1$ and since $vL(\Theta_1)v^*$ is non-amenable, it follows from \cite[Theorem 6.1]{CSU11} that $B$ is amenable. Also if we assume that $B$ is diffuse then by Lemma \ref{wc-emb} one contradicts the solidity of $L(\G_i)$; thus, $B$ is a completely atomic factor whence $B=M_k(\mathbb C)$, for some integer $k$. Altogether, this shows that $vL(\Theta_2)v^*= M^{s}_2$ where $s=k/\mu$. Since $M=M^{1/s}_1\bar\otimes M^s_2$ we also have $v(L(\Theta_2)'\cap M)v^*= M^{1/s}_1$. Let $\Phi_1=\{ \la \in\La \,:\, |\mathcal O_{\Theta_2}(\la) |<\infty \}$ and since $\Theta_2$ is ICC it follows that $ \Phi_1\cap\Theta_2=1$.  By construction we have that $vL(\Phi_1)v^*\supseteq v(L(\Theta_2)'\cap M)v^*= M^{1/s}_1$; hence, using \cite[Theorem A]{G96} again we get $vL(\Phi_1)v^*=A\bar \otimes v(L(\Theta_2)'\cap M)v^*$ for some $A \subseteq vL(\Theta_2)v^*$. In particular, we have that $A=v L(\Phi_1)v^*\cap v L(\Theta_2)v^* =vL(\Phi_1\cap \Theta_2)v^*= \mathbb C 1$. This further implies that $vL(\Phi_1)v^*=v(L(\Theta_2)'\cap M) v^*$ whence $\Phi_1$ and $\Theta_2$ are commuting, non-amenable subgroups of $\La$ such that $\Phi_1\cap\Theta_2=1$, $\Phi_1\Theta_2=\La$, $vL(\Phi_1)v^*=  M^{1/s}_1$, and $vL(\Theta_2)v^*= M^{s}_2$. Letting $\Phi_2=\Theta_2$ we get the desired conclusion.\end{proof}

\begin{theorem}[Theorem \ref{main3}] Let $\G_1,\G_2,\ldots ,\G_n\in {\mathcal S}_{nf}$ be weakly amenable groups with $\G=\G_1\times \G_2\times\cdots\times\G_n$ and denote by $M=L(\G)$. Let $t>0$ be a scalar and let $\La$ be an arbitrary group such that $M^t =L(\La)$.  Then one can find subgroups $\La_1,\La_2,\ldots,\La_n<\La$ with $\La_1\times \La_2\times \cdots \times \La_n=\La$, scalars $t_i>0$ with $t_1t_2\cdots t_n=t$, and a unitary $u\in \mathscr U(M)$ such that $uL(\La_i)u^*= {L(\G_i)}^{t_i}$ for all $1\leq i\leq n$.     
\end{theorem}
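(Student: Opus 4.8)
The plan is to bootstrap from Theorem~\ref{split1}, which already gives a single splitting $\La = \Phi_1\times\Phi_2$ together with a unitary conjugating $L(\Phi_1),L(\Phi_2)$ onto amplifications of $L(\G_F)$ and $L(\G_{F^c})$ for some proper nonempty $F\subset\{1,\dots,n\}$, by inducting on $n$. The base case $n=2$ is exactly Theorem~\ref{split1} (with $|F|=1$). For the inductive step, after applying Theorem~\ref{split1} we have $vL(\Phi_1)v^* = L(\G_F)^s$ and $vL(\Phi_2)v^* = L(\G_{F^c})^{t/s}$ with $1\le |F|\le n-1$. Both $\G_F = \prod_{i\in F}\G_i$ and $\G_{F^c} = \prod_{i\notin F}\G_i$ are again finite products of fewer than $n$ weakly amenable groups in $\mathcal S_{nf}$ (each factor being a product of at least one and at most $n-1$ of the $\G_i$). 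Since $L(\Phi_1) = v^* L(\G_F)^s v \cong L(\G_F)^s$ as a group von Neumann algebra, we may apply the inductive hypothesis to the group $\Phi_1$ with the product $\G_F$ and amplification parameter $s$, obtaining subgroups $\{\Phi_1^{(i)}\}_{i\in F}$ of $\Phi_1$ with $\prod_{i\in F}\Phi_1^{(i)} = \Phi_1$, scalars $\{s_i\}_{i\in F}$ with $\prod s_i = s$, and a unitary $w_1\in \mathscr U(L(\G_F)^s)$ conjugating the images of $L(\Phi_1^{(i)})$ onto $L(\G_i)^{s_i}$; similarly for $\Phi_2$ with parameter $t/s$, yielding $\{\Phi_2^{(j)}\}_{j\notin F}$ and scalars $\{s_j\}$ with $\prod s_j = t/s$.

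Next I would assemble these into a single decomposition of $\La$. Set $\La_i := \Phi_1^{(i)}$ for $i\in F$ and $\La_j := \Phi_2^{(j)}$ for $j\notin F$, and $t_i := s_i$ accordingly; then $\prod_{i=1}^n t_i = s\cdot (t/s) = t$ as required. Because $\La = \Phi_1\times\Phi_2$ is an \emph{internal} direct product, the subgroups $\{\La_i\}_{i\in F}\subset\Phi_1$ and $\{\La_j\}_{j\notin F}\subset\Phi_2$ generate $\La$ and pairwise commute (subgroups of $\Phi_1$ commute with subgroups of $\Phi_2$, and within each factor the internal product structure is guaranteed by the inductive hypothesis), and moreover $\La_k\cap\prod_{m\neq k}\La_m = 1$; hence $\La = \La_1\times\La_2\times\cdots\times\La_n$ as an internal direct product. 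For the unitary, I would combine $v$ with the unitaries produced by the inductive hypothesis: since $w_1\in\mathscr U(L(\G_F)^s) = \mathscr U(vL(\Phi_1)v^*)$ and $w_2\in\mathscr U(L(\G_{F^c})^{t/s}) = \mathscr U(vL(\Phi_2)v^*)$, and $L(\Phi_1)$ commutes with $L(\Phi_2)$, the element $w = (w_1 w_2) v\in\mathscr U(M^t)$ (after transporting $w_1,w_2$ back through $v$, or equivalently working with $v^*(w_1w_2)v\cdot v = v^* w_1 w_2 v^*\!\cdot$ — more cleanly, $w := (v^* w_1 v)(v^* w_2 v)\,v^{-1}$, a unitary in $M^t$) satisfies $w L(\La_i) w^* = L(\G_i)^{t_i}$ for every $i$. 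One then replaces $w$ by an honest unitary in $\mathscr U(M)$ using that $M^t$ and $M$ have the same unitary group up to the standard amplification identification, exactly as in the statement of Theorem~\ref{split1}.

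The main obstacle is bookkeeping rather than a new idea: one must check carefully that the internal direct product structures assembled from the two halves are mutually compatible, i.e.\ that $\{\La_i\}$ really furnishes an $n$-fold internal direct product of $\La$, and that the amplification parameters multiply correctly through the nested applications of the inductive hypothesis (keeping track of which $L(\G_i)^{t_i}$ sits inside which tensor factor). A secondary point requiring care is that Theorem~\ref{split1} only guarantees \emph{some} proper nonempty $F$, so in the recursion the partition of $\{1,\dots,n\}$ is not under our control — but this causes no difficulty since the inductive hypothesis applies to any product of fewer than $n$ factors from $\mathcal S_{nf}$, and one simply re-indexes. Finally, I would note that the identification $uL(\La_i)u^* = L(\G_i)^{t_i}$ is only up to the obvious permutation of the labels $1,\dots,n$, which is harmless since the hypotheses are symmetric in the $\G_i$; in the write-up this can be absorbed by relabeling the $\G_i$ at the outset.
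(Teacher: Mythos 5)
Your proposal is correct and follows essentially the same route as the paper: induction on $n$ with base case Theorem~\ref{split1}, applying the inductive hypothesis separately to the two factors $\Phi_1,\Phi_2$ of the splitting and then assembling the subgroups, the scalars, and the unitaries. One small caution: your first formula $w=(w_1w_2)v$ (the paper's $u=(u_1\otimes u_2)v$) is the correct way to combine the unitaries, whereas the ``more cleanly'' rewritten $w=(v^*w_1v)(v^*w_2v)v^{-1}=v^*w_1w_2$ conjugates in the wrong order (here $w_2\in vL(\Phi_2)v^*$ need not commute with $L(\La_i)\subset L(\Phi_1)$), so that variant should be dropped.
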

\begin{proof} We proceeding by induction on $n$. Since $n=2$ follows directly from Theorem \ref{split1} it remains to only show the induction step. So assume the statement is satisfied for any collection of $k\leq n-1$ groups belonging to $\mathcal S_{nf}$. By Theorem \ref{split1} one can find subgroups $\La'_1,\La'_2<\La$ with $\La'_1\times \La'_2=\La$, a scalar $s>0$, a proper subset $F\subset\{1,\ldots,n\}$, and a unitary $v\in \mathscr U(M^t)$ such that $vL(\La'_1)v^*= L(\G_F)^s$ and $vL(\La'_2) v^*= L(\G_{F^c})^{t/s}$. Since $vL(\La'_1)v^*= L(\G_F)^s$ then by the induction hypothesis there exist subgroups $\La_i<\La'_1$, $i\in F$ with $\times_{i\in F} \La_i=\La'_1$, scalars $t_i>0$ with $\Pi_{i\in F}  t_i=s$, and $u_1\in \mathscr U(L(\G_F)^s)$ such that $u_1L(\La_i )u_1^*= {L(\G_i)}^{t_i}$, for all $i\in F$. Similarly, one can find subgroups $\La_i<\La'_2$, $i\in F^c$ with $\times_{i\in F^c} \La_i=\La'_2$, scalars $t_i>0$ with $\Pi_{i\in F^c}  t_i=t/s$, and  $u_2\in \mathscr U(L(\G_{F^c})^{t/s})$ such that $u_2L(\La_i) u_2^*= L(\G_i)^{t_i}$, for all $i\in F^c$. Altogether, the previous relations imply $\times^n_{j=1} \La_j=\La'_1\times \La'_2=\La$ and $\Pi^n_{j=1} t_j= (\Pi_{i\in F} t_i)( \Pi_{i\in F^c} t_i)=s \cdot t/s=t$. Moreover, letting $u=(u_1\otimes u_2)v$ we get $uL(\La_i)u^*= {L(\G_i)}^{t_i}$ for all $1\leq i\leq n$, as desired.\end{proof}



\subsection{Remarks on the use of weak amenability}

Notice that in the proof of Theorem \ref{main3} the assumption that the $\G_i$'s are weakly amenable is used only in the proof of {\bf Claim \ref{9}}, class $\mathcal S_{nf}$ being sufficient for all the other steps of the proof. In fact when we are dealing with only two groups the weak amenability assumption can be dropped.

\begin{theorem}\label{product2}Let $\G_1,\G_2\in \mathcal S$ with $\G=\G_1\times \G_2$ and denote by $M=L(\G)$. Let $t>0$ be a scalar and let $\La$ be an arbitrary group such that $M^t =L(\La)$. Then there exist commuting, non-amenable, ICC subgroups $\Sigma_1,\Sigma_2<\La$ such that $[\La:\Sigma_1 \Sigma_2]<\infty$.
\end{theorem}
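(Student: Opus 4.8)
The plan is to follow the blueprint of Theorem \ref{product1} almost verbatim, but to replace the one place where weak amenability was used --- namely {\bf Claim \ref{9}} --- with an argument that relies only on membership of $\G_1,\G_2$ in the class $\mathcal S$. First I would invoke Corollary \ref{int1} to produce a non-amenable subgroup $\Sigma<\La$ with non-amenable centralizer $C_\La(\Sigma)$ and (since $n=2$, so $\hat\G_1 = \G_2$ and $\hat\G_2=\G_1$) some $\ell\in\{1,2\}$ with $L(\hat\G_\ell)^t\preceq_{M^t} L(\Sigma)$; relabel so that $\ell=2$, write $P=L(\G_1)^t$, $N=L(\G_2)$, $M^t = P\bar\otimes N$. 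Exactly as in Claims \ref{12} and \ref{12.5} one reduces, after cutting down by appropriate central projections and using \cite[Proposition 2.4]{CKP14}, \cite[Lemma 3.1.8]{Jo81}, and \cite[Proposition 12]{OP03}, to the situation where $Q=\phi(pPp)\subseteq qL(\Sigma)q$ is a finite index inclusion of II$_1$ factors and where there is a projection $f\in \mathscr P(L(\Sigma)'\cap M^t)$ making $f(L(\Sigma)\vee(L(\Sigma)'\cap M^t))f\subseteq fM^t f$ a finite index inclusion of II$_1$ factors. None of this needs weak amenability. Similarly, Lemma \ref{reduction} in the $n=2$ case reduces to solidity of a single $\G_i$ and Lemma \ref{wc-emb} is likewise available.

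The crux is to re-prove {\bf Claim \ref{9}}, i.e.\ that $\Sigma\cap\Omega$ is finite, where $\Omega=\{\la\in\La:|\mathcal O_\Sigma(\la)|<\infty\}$. As in the original argument, one first notes that $\Sigma\cap\Omega$ is a normal amenable (indeed locally virtually abelian) subgroup of $\Sigma$, using that each $R_k=\langle\cup_{i\in I_k}\mathcal O_i'\rangle$ has finite index centralizer in $\Sigma$. The original proof then passed through weak amenability to run a biexactness/weak-compactness dichotomy; here instead I would run the \emph{strong} solidity-type machinery of \cite{PV12} directly. Performing the basic construction $Q^s\subseteq L(\Sigma)z\subseteq \langle L(\Sigma)z,e_{Q^s}\rangle$ as in the original, one realizes the ambient factor as $N^r$ with $N=L(\hat\G_2)\bar\otimes M_k(\mathbb C)=L(\G_1)\bar\otimes M_k(\mathbb C)$. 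Since $\G_1\in\mathcal S$ (equivalently, $\G_1$ is biexact relative to the trivial family and exact, by \cite{PV12}), \cite[Theorem 1.6]{PV12} applies and gives that the normalizer of the diffuse amenable subalgebra $L(\Sigma\cap\Omega)z$ inside $N^r$ is amenable --- unless $L(\Sigma\cap\Omega)z\preceq^s_{N} M_k(\mathbb C)$. But $L(\Sigma)z$ is a non-amenable factor normalizing $L(\Sigma\cap\Omega)z$ and of finite index in $\langle L(\Sigma)z,e_{Q^s}\rangle$; hence the amenable-normalizer alternative is impossible, forcing $L(\Sigma\cap\Omega)z\preceq^s_N M_k(\mathbb C)$. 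Via \cite[Lemmas 2.5, 2.7]{Va10} this says a corner of $L(\Sigma\cap\Omega)$ is purely atomic, and then Proposition \ref{fin-index1} forces $\Sigma\cap\Omega$ to be finite. The point is that for $n=2$ there is no inductive descent through intermediate factors $L(\hat\G_{\{n,s\}})$ --- the single step to $M_k(\mathbb C)$ suffices --- so the more delicate relative biexactness statements from \cite{CSU11} that required weak amenability are not needed; plain $\mathcal S$ plus \cite{PV12} does the job.

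With {\bf Claim \ref{9}} in hand, the remaining Claims \ref{13}, \ref{14}, \ref{24}, \ref{25}, \ref{26} of the proof of Theorem \ref{product1} carry over unchanged: Claim \ref{13} ($[\La:\Omega\Sigma]<\infty$) from the finite index inclusion $fL(\Omega\Sigma)f\subseteq fL(\La)f$ and Proposition \ref{fin-index1}; Claim \ref{14} (uniform orbit bound on $\Omega$) from the decay Lemma \ref{decay1}, using the finite Pimsner--Popa basis for $f(L(\Sigma)\vee L(\Sigma)'\cap M^t)f$ together with the identity $E_{L(\Sigma)\vee L(\Sigma)'\cap M^t}|_{L(\Omega)}=E_{L(\Sigma\cap\Omega)\vee L(\Sigma)'\cap M^t}|_{L(\Omega)}$ (where the finiteness of $\Sigma\cap\Omega$ is exactly what lets one reduce to a finite basis over $L(\Sigma)'\cap M^t$); Claims \ref{24}--\ref{26} the intertwining/orbit-counting gymnastics producing a finite index $\Theta\leqslant\Sigma$ with $\Theta\leqslant\Omega'$ of finite index. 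Finally, combining $\Sigma\cap\Omega$ finite (Claim \ref{9}) with $[\Omega':\Theta]<\infty$ (Claim \ref{26}) gives $[\Theta,\Omega]\leqslant\Omega'\cap\Omega$ finite and normal in the finite index, ICC group $\Omega\Theta$, hence $[\Theta,\Omega]=1$; set $\Sigma_1=\Theta$, $\Sigma_2=\Omega$. \emph{The main obstacle} I anticipate is precisely verifying that in the $n=2$ setting the single application of \cite[Theorem 1.6]{PV12} (with the $M_k(\mathbb C)$-amplification bookkeeping) replaces the weak-amenability-dependent relative-biexactness descent of \cite{CSU11} cleanly --- one must be careful that the normalizer $\mathscr N_{L(\Sigma)z}(L(\Sigma\cap\Omega)z)''=L(\Sigma)z$ really is non-amenable of finite index in the basic-construction algebra, so that the amenable alternative in \cite{PV12} is genuinely excluded; everything else is a faithful transcription of the proof of Theorem \ref{product1}.
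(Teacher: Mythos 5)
Your overall strategy is the right one, and you correctly isolate the only step that needs new input: everything except {\bf Claim \ref{9}} goes through for groups in $\mathcal S$ exactly as in Theorem \ref{product1}, which is also how the paper proceeds. However, your replacement for {\bf Claim \ref{9}} has a genuine gap: you propose to conclude via \cite[Theorem 1.6]{PV12}, asserting that membership of $\G_1$ in $\mathcal S$ (bi-exactness plus exactness) suffices for it. It does not. The hypotheses of \cite[Theorem 1.6]{PV12} include weak amenability of the group --- weak amenability is what supplies weak compactness of the embedding of an arbitrary amenable subalgebra, which is the engine behind that relative strong solidity statement --- and this citation is precisely the one place where weak amenability entered the proof of Theorem \ref{product1} (see the paper's remark preceding Theorem \ref{product2}). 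So your argument is circular: the tool you invoke to ``bypass'' weak amenability is the weak-amenability-dependent tool. The issue has nothing to do with the inductive descent through the intermediate factors $L(\hat\G_{\{n,s\}})$; even a single application of \cite[Theorem 1.6]{PV12}, or of \cite[Theorem 6.1]{CSU11} in its usual form, needs either weak amenability or an independent verification of weak compactness.

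What the paper does instead is to manufacture the weak compactness by hand from the group structure. First it shows each $R_k$ is finite: $C_k=R_k\cap\Sigma_k$ satisfies $L(C_k)\subseteq L(\Sigma_k)'\cap L(\Sigma)$, and since $[\Sigma:\Sigma_k]<\infty$ one gets $[L(\Sigma_k)'\cap L(\Sigma):\mathscr Z(L(\Sigma))]_{PP}<\infty$; cutting by $z$ and using factoriality of $L(\Sigma)z$, a corner of $L(C_k)$ is one-dimensional, so $C_k$ is finite by Corollary \ref{diffcorn} (via Proposition \ref{fin-index1}). Then, assuming $\Sigma\cap\Omega$ infinite, the explicit vectors $\xi_n=|R_k|^{-1/2}\sum_{a\in R_k}(u_az)\otimes(\overline{u_az})$ built from the increasing union of finite normal subgroups $R_k$ are shown to witness weak compactness of the conjugation action of $\{u_\sigma z:\sigma\in\Sigma\}$ on the diffuse algebra $L(\Sigma\cap\Omega)z$ (relations of $\Sg$-invariance, asymptotic $R_k$-invariance, and trace recovery). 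With weak compactness verified directly, the bi-exactness argument of \cite[Theorem 6.1]{CSU11}/\cite[Theorem 3.2]{CS11} --- which only needs $\G_1\in\mathcal S$ --- applies inside the finite-index inclusion $L(\Sigma)z\subseteq Q^\mu=L(\G_1)^{t_1}$ and forces $L(\Sigma)z$ to be amenable, a contradiction. To repair your proof you would need to supply this (or some other) substitute for weak compactness; as written, the key step does not follow from $\mathcal S$ alone.
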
 
 
\begin{proof} The proof is identical with the proof of Theorem \ref{product1}. Thus we borrow the same notations and set-up as there. We only include a proof for Claim \ref{9} which bypasses the usage of weak amenability, the rest of the proof following the same arguments. Under the same notations as in the proof of Theorem \ref{product1} we show that:

\begin{claim}\label{9'''}$\Sigma \cap \Omega$ is finite.
\end{claim}

Let $\mathcal O'_i =\mathcal O_i\cap \Sigma$ and
notice $\Sigma \cap \Omega= \cup_i \mathcal O'_i$. For each $k$ let $R_k = \langle \cup_{i\in I_k}\mathcal O'_i\rangle $ and notice it forms an ascending sequence
of normal subgroups of $\Sigma$ such that $\cup_k R_k =\Sigma \cap \Omega $ and $[\Sigma:\Sigma_k]<\infty$, where $\Sigma_k=C_\Sigma(R_k)$. Since $R_k\cap \Sigma_k$ is abelian and $[\Sigma:\Sigma_k]<\infty$ it follows that $R_k$ is virtually abelian; thus, $\Sigma \cap \Omega$ is a normal amenable subgroup of $\Sigma$. 

In the first part of the proof of Theorem \ref{main3} (see Claim \ref{12}) we have obtained that $Q\subseteq qL(\Sg) q$ is a finite index inclusion of non-amenable II$_1$ factors. Denoting by $z=z(q)$, the central support of $q$ in $L(\Sg)$, we see that $L(\Sg) z$ is a non-amenable II$_1$ factor. Moreover there exists a scalar $s>0$ such that 
$(qL(\Sg) q)^s= L(\Sg) z$, so $Q^s\subseteq (qL(\Sg) q)^s=L(\Sg) z$ is a finite index inclusion of non-amenable II$_1$ factors. Perform the basic construction $Q^s\subseteq L(\Sg) z\subseteq \langle L(\Sg) z, e_{Q^s}\rangle $, and notice that $\langle L(\Sg) z, e_{Q^s}\rangle= Q^\mu$ where $\mu=s[qL(\Sg) q:Q]^2$. 

To begin we argue that each $R_k$ is finite. Since $C_k:=R_k\cap \Sigma_k\leqslant R_k$ has finite index it suffices to show that $C_k$ is finite. From construction note that \begin{equation}\label{1000}
L(C_k)\subseteq \mathscr Z(L(\Sg_k))\subseteq L(\Sg_k)'\cap L(\Sg).
\end{equation}
By passing to a finite index subgroup we can further assume w.l.o.g.\ that $\Sg_k\lhd \Sg$ is normal and $[\Sg:\Sg_k]=r$. Fix $\g_1,\g_2,...,\g_r\in \Sg$ a complete set of representatives for the cosets of $\Sg_k$ in $\Sg$. One can check the map $E_{\mathscr Z(L(\Sg))}: L(\Sg_k)'\cap L(\Sg)\ra \mathscr Z(L(\Sg))$ given by $E_{\mathscr Z(L(\Sg))}(x)=r^{-1}\sum_i u_{\g_i} xu^*_{\g_i}$ for all $x \in L(\Sg_k)'\cap L(\Sg)$ is  a trace-preserving conditional expectation satisfying $\|E_{\mathscr Z(L(\Sg))}(x)\|^2_2 \geq r^{-1}\|x\|^2_2$; hence $[L(\Sg_k)'\cap L(\Sg):\mathscr Z(L(\Sg))]_{PP}\leq r$. By Theorem \ref{basic} (2) we have $[L(\Sg_k)'\cap L(\Sg) z:\mathscr Z(L(\Sg)) z]_{PP}\leq r$ and since $L(\Sg)z$ is a factor it follows that $L(\Sg_k)'\cap L(\Sg) z$ is finite dimensional. Hence there exists $z_0\in \mathscr P(L(\Sg_k)'\cap L(\Sg))$ so that $L(\Sg_k)'\cap L(\Sg) z_0=\mathbb Cz_0$. Combining with (\ref{1000}) we have $z_0\in \mathscr P(L(C_k)'\cap L(\Sg))$ and $L(C_k) z_0=\mathbb C z_0$. By functional calculus one can find $z_1\in \mathscr P (\mathscr Z(L(C_k)))$ so that $L(C_k) z_1=\mathbb C z_1$ and Corollary \ref{diffcorn} entails $C_k$ is finite.

We now show $\Sigma \cap \Omega$ is finite. Proceeding by contradiction assume that $\Sigma\cap \Omega$ is infinite whence $L(\Sigma \cap \Omega)z\subset L(\Sg) z$ is a diffuse subalgebra. Also notice that $z\in \mathscr Z (L(\Sg))\subseteq L(\Sg\cap \Omega)$ and $\mathscr Z(L(\Sg))=\mathbb c z$. Let $\mathscr G = \{u_\sigma z : \sigma\in \Sigma\}\subset \mathscr N_{L(\Sigma)z}(L(\Sg\cap\Omega)z)$. We argue that the natural action by conjugation $\mathscr G\curvearrowright L(\Sigma \cap \Omega)z$ is weakly compact. For every $n\in \mathbb N$ consider the selfadjoint element $\xi_n=|R_k|^{-1/2}\sum_{a\in R_k} (u_a z)\otimes (\overline{u_a z}) \in L(R_k)z \bar\otimes L(R_k)z$. Since $R_k$ is normal in $\Sg$ we have 
\begin{equation}\label{1001}
(u_\g z)\otimes (\overline{u_\g z}) \xi_n=\xi_n (u_\g z)\otimes (\overline{u_\g z}), \text{ for all }\g \in \Sg.
\end{equation} 
Also one can check for all $a\in R_k$  and $l\geq k$ we have $(u_a z)\otimes( \overline{u_a z}) \xi_l=\xi_l $. This implies
\begin{equation}\label{1002}
\lim_n\|(u_a z)\otimes( \overline{u_a z}) \xi_n-\xi_n \|_{z,2}=0, \text{ for all }a\in \Sg\cap \Omega.
\end{equation} 
Here $\|\cdot\|_{z,2}$ is the $2$-norm induced by the trace $\tau_z(y)=\tau(yz)\tau^{-1}(z)$ on $L(\Sg)z$, where $\tau$ is the canonical trace on $L(\Sg)$.

For all $k\in \mathbb N$ and  $x\in L(\Sg)$ we have  the following basic calculations:
\begin{equation*}\label{1003}
\begin{split} \langle (xz)\otimes z \xi_k,\xi_k\rangle& = |R_k|^{-1}\sum_{a,b\in R_k} \langle xu_a z,u_bz\rangle \langle \overline{u_a z},\overline{u_bz}\rangle=|R_k|^{-1}\sum_{a,b\in R_k} \tau(xu_a zu_{b^{-1}}) \tau(zu_{a^{-1}b})\\
&=|R_k|^{-1}\sum_{a,s\in R_k} \tau(xu_a zu_{s^{-1} a^{-1}}) \tau(zu_s)=|R_k|^{-1}\sum_{a\in R_k} \tau( xu_a z(\sum_{s\in R_k}\tau(zu_{s})u_s^{-1}) u_{a^{-1}})\\
&=|R_k|^{-1}\sum_{a\in R_k} \tau( xu_a zE_{L(R_k)}(z) u_{a^{-1}})=\tau( x zE_{L(R_k)}(z)).
\end{split}
\end{equation*} 
Since $\cup_k R_k = \Sigma \cap \Omega$ we further have that 
\begin{equation}\label{1004} 
\lim_n \langle (xz)\otimes z \xi_n,\xi_n\rangle_{z}=\lim_n \tau_z( x zE_{L(R_n)}(z))=\tau_z(x), \text{ for all }x\in L(\Sg).
\end{equation} A similar computation shows that \begin{equation}\label{1005} 
\lim_n \langle z\otimes (\overline{xz}) \xi_k,\xi_k\rangle_{z}=\tau_z(x), \text{ for all }x\in L(\Sg).
\end{equation}
  From (\ref{1}) we have  $Q^\mu=L(\G_1)^{t_1}$ with $t_1=\tau(p)\mu$, and from above we get that $L(\Sg)z\subseteq L(\G_1)^{t_1}$ is a finite index inclusion of  II$_1$ factors. Since $\G_1\in \mathcal S_{nf}$ then the relations (\ref{1001})-(\ref{1004}) together with the same proof as in \cite[Theorem 6.1]{CSU11} show that $\mathscr G'' = L(\Sigma)z$ is amenable. However, this contradicts the non-amenability of $L(\G_1)$. Hence $\Sigma \cap \Omega$ is finite.
\end{proof}

\begin{theorem}[Corollary \ref{main1}]\label{split2} Let $\G_1,\G_2\in \mathcal S_{nf}$ with $\G=\G_1\times \G_2$ and denote by $M=L(\G)$. Let $t>0$ be a scalar and let $\La$ be an arbitrary group such that $M^t =L(\La)$. Then one can find subgroups $\La_1,\La_2<\La$ with $\La_1\times \La_2=\La$, a scalar $s>0$, a proper subset $F\subset\{1,\ldots,n\}$ and a unitary $v\in \mathscr U(M)$ such that $vL(\La)_1v^*= L(\G_1)^s$ and $vL(\La)_2 v^*= L(\G_2)^{t/s}$.     
\end{theorem}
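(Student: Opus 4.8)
The plan is to transcribe, in the special case $n=2$, the argument used to prove Theorem~\ref{split1}, replacing its single appeal to Theorem~\ref{product1} by an appeal to Theorem~\ref{product2}. Indeed, weak amenability of the $\G_i$'s entered the proof of Theorem~\ref{main3} \emph{only} through \textbf{Claim~\ref{9}} inside the proof of Theorem~\ref{product1}; every other step --- all of Theorem~\ref{split1}, Lemma~\ref{reduction}, Lemma~\ref{wc-emb}, and the technical results of Section~2 --- relies solely on the $\G_i$ lying in $\mathcal S_{nf}$, in particular on solidity of the factors $L(\G_i)$ (available for groups in $\mathcal S_{nf}$ by \cite{CSU11,PV12}), together with the intertwining and unique prime factorization machinery of \cite{OP03,CSU11}. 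Since Theorem~\ref{product2} already supplies, for a product of \emph{two} groups in $\mathcal S_{nf}$ and with no weak amenability hypothesis, commuting non-amenable ICC subgroups $\Sigma_1,\Sigma_2<\La$ with $[\La:\Sigma_1\Sigma_2]<\infty$, the full force of the proof of Theorem~\ref{split1} is available, and that is precisely what the present statement asks for.

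In detail: starting from $\Sigma_1,\Sigma_2$ as above, fix an integer $r>t$ and apply \cite[Theorem~6.1]{CSU11} inside $M^r$ to obtain (after possibly swapping $\Sigma_1$ and $\Sigma_2$) a proper subset $F\subsetneq\{1,2\}$ with $L(\Sigma_1)\preceq_{M^r}L(\G_F)^r$; here necessarily $|F|=1$, since $L(\G_\emptyset)=\mathbb C$ admits no diffuse corner and each $L(\G_i)$ is already prime, so the ``smallest index'' in \cite[Theorem~6.1]{CSU11} is forced to be $1$. Set $M_1=L(\G_F)^r$ and $M_2=L(\G_{F^c})^{t/r}$, so $M^t=M_1\bar\otimes M_2$. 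Running the argument of \cite[Proposition~12]{OP03} produces $\mu>0$ and $u\in\mathscr I(M^t)$ with $uu^*\in M_2^{1/\mu}$, $u^*u\in L(\Sigma_1)'\cap M^t$, and $uL(\Sigma_1)u^*\subseteq M_1^\mu(uu^*)$. Passing to $\Omega_2=\{\la\in\La:|\mathcal O_{\Sigma_1}(\la)|<\infty\}$ and $\Omega_1=C_{\Sigma_1}(\Omega_2)$ (so $\Sigma_2\leq\Omega_2$, $[\La:\Omega_1\Omega_2]<\infty$, $[\Sigma_1:\Omega_1]<\infty$, $L(\Sigma_1)'\cap M^t\subseteq L(\Omega_2)$, and $L(\Omega_2)$ a factor), one spreads $u$ over finitely many partial isometries using factoriality of $L(\Omega_2)$ and $M_2^{1/\mu}$ to get a unitary $v$ with $vL(\Omega_1)v^*\subseteq M_1^\mu$, hence $v(L(\Omega_1)'\cap M^t)v^*\supseteq M_2^{1/\mu}$. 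One further iteration with $\Theta_2=\{\la:|\mathcal O_{\Omega_1}(\la)|<\infty\}$, $\Theta_1=C_{\Omega_1}(\Theta_2)$ gives $vL(\Theta_2)v^*\supseteq M_2^{1/\mu}$, so \cite[Theorem~A]{G96} forces $vL(\Theta_2)v^*=B\bar\otimes M_2^{1/\mu}$ for a subfactor $B\subseteq M_1^\mu$; since $vL(\Theta_1)v^*$ is non-amenable and commutes with $B$, \cite[Theorem~6.1]{CSU11} makes $B$ amenable, and Lemma~\ref{wc-emb} rules out $B$ being diffuse, whence $B=M_m(\mathbb C)$ and $vL(\Theta_2)v^*=M_2^s$ with $s=m/\mu$. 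Finally, setting $\Phi_1=\{\la:|\mathcal O_{\Theta_2}(\la)|<\infty\}$ one has $\Phi_1\cap\Theta_2=1$ (because $\Theta_2$ is ICC) and $vL(\Phi_1)v^*\supseteq v(L(\Theta_2)'\cap M^t)v^*=M_1^{1/s}$, so a last application of \cite[Theorem~A]{G96} yields $vL(\Phi_1)v^*=M_1^{1/s}$; taking $\La_1=\Phi_1$, $\La_2=\Theta_2$ and relabelling the scalar gives $\La=\La_1\times\La_2$ with $vL(\La_1)v^*=L(\G_1)^{s'}$ and $vL(\La_2)v^*=L(\G_2)^{t/s'}$.

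I do not expect a substantial new difficulty beyond Theorem~\ref{product2}, which is already proved: what remains is to check that each black-box result (\cite[Theorem~6.1]{CSU11}, \cite[Proposition~12]{OP03}, \cite[Theorem~A]{G96}, Lemma~\ref{wc-emb}) applies to the relevant amplified or cornered algebras and to keep track of amplification constants. The one conceptual point worth highlighting is why the recursion through $\Omega$ and $\Theta$ terminates after exactly these two iterations: because $n=2$ the only admissible index $|F|$ is $1$ and $L(\G_F)$ is already prime, so there is no further prime factor to ``peel off'' and the second finite-orbit subgroup already delivers the honest product decomposition. As for Theorem~\ref{main3}, the identification of the two factor algebras can only be obtained up to amplification, consistently with the free group example in the introduction.
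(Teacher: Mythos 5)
Your proposal is correct and follows essentially the same route as the paper: the paper's own proof of this statement is precisely "apply Theorem \ref{product2} and then proceed as in the proof of Theorem \ref{split1}," which is what you do, having correctly observed that weak amenability entered only through Claim \ref{9} in the proof of Theorem \ref{product1}. Your additional detailed transcription of the \ref{split1} argument in the case $n=2$ is consistent with the paper.
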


\begin{proof} If follows from Theorem \ref{product2}, proceeding as in the proof of Theorem \ref{split1}
\end{proof}


\subsection{Von Neumann algebras generated by lattices} In this section we show how the main result can be used to give new families of examples of ICC discrete groups which are measure equivalent, but whose group von Neumann algebras are not stably isomorphic. The first such examples were constructed by the first author and Ioana \cite{CI11}. We recall that discrete groups $\G$ and $\La$ are said to be \emph{measure equivalent} if there exists an essentially free action $\G\times\La\ca (\Omega, m)$ on a standard $\sigma$-finite measure space such that the restriction of the action to each of $\G$ and $\La$ admits a Borel fundamental domain of finite measure. The prototypical examples of measure equivalent discrete groups are pairs of lattices in semisimple Lie groups: see \cite{Fu99a} for a thorough treatment of measure equivalence. 

In order to establish the result, we will need to make use of a celebrated theorem of Margulis in order to distinguish the algebraic structure of certain lattices in the same ambient Lie group. We briefly recall that if $G$ is a semisimple Lie group without compact factors, then a lattice $\G < G$ is said to be \emph{reducible} if $G$ admits semisimple factors $G_1$ and $G_2$ so that setting $\G_1 := \G\cap G_1$ and $\G_2 := \G\cap G_2$ we have that $\G_1\G_2$ is a finite index subgroup of $\G$. For example, a finite product of lattices is a reducible lattice in the product of the respective ambient Lie groups. A lattice is \emph{irreducible} if it is not reducible. For instance, the natural inclusion of $\mathbb Z[\sqrt 2]$ as a lattice in $\mathbb R\times \mathbb R$ induces an irreducible lattice inclusion of $\PSL(2, \mathbb Z[\sqrt 2])$ into $\PSL(2,\mathbb R)\times \PSL(2,\mathbb R)$. Note that if a group $\G$ virtually decomposes as a direct product of infinite groups, e.g., $\G$ is a reducible lattice, then $\G$ admits an infinite normal subgroup with infinite index.  However it is a consequence of the following famous theorem of Margulis \cite{Ma79, Z84} that irreducible lattices can admit no direct product decomposition:

\begin{theorem}[Normal Subgroup Theorem] If $\G< G$ is an irreducible lattice in a higher-rank semisimple Lie group $G$ with no compact factors then $\G$ is virtually simple, i.e., any normal subgroup of $\G$ is either finite or has finite index in $\G$.
\end{theorem}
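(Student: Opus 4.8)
The statement is Margulis' normal subgroup theorem \cite{Ma79, Z84}; the plan is to reproduce the standard boundary-theoretic argument. Let $N\lhd \G$ be an infinite normal subgroup; the goal is to prove $[\G:N]<\infty$, equivalently that the quotient $Q:=\G/N$ is finite. The strategy is to force $Q$ to enjoy two incompatible properties --- \emph{amenability} and \emph{property (T)} --- a combination which, for a discrete group, entails finiteness: the trivial representation is then both isolated in the unitary dual and weakly contained in the regular representation, hence actually contained in $\ell^2(Q)$, so $Q$ is finite.

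To see that $Q$ is amenable, let $B=G/P$ be the Furstenberg boundary, $P<G$ a minimal parabolic. Since $P$ is amenable, Zimmer's theory gives that the $\G$-action on $B$ is amenable. The crucial point is that the restricted $N$-action on $B$ is already ergodic: because $N\lhd\G$, the $N$-ergodic decomposition of $B$ is a measurable $\G$-factor of $B$, so by Margulis' factor theorem it has the form $G/R$ for a parabolic $R\supseteq P$ on which $N$ acts trivially; since $N\hookrightarrow G$, and proper parabolics have small normal cores while the lattice $\G$ is irreducible (so its projections to the factors of $G$ are dense), the only possibility compatible with $N$ being infinite is $R=G$, i.e.\ the $N$-action on $B$ is ergodic. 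Now feed an arbitrary compact convex $Q$-set $K$ into the amenable $\G$-action on $B$: one obtains a $\G$-equivariant measurable map $B\to K$ which, being $N$-invariant, is essentially constant, and hence yields a $Q$-fixed point in $K$. As $K$ was arbitrary, $Q$ is amenable.

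To see that $Q$ has property (T): when every simple factor of $G$ has real rank at least two, $G$ is Kazhdan (Kazhdan's theorem), hence so is the lattice $\G$, hence so is its quotient $Q$, and we are done by the previous two paragraphs. When $G$ has rank-one factors --- as in the motivating case $\PSL(2,\Z[\sqrt 2])<\PSL(2,\R)\times\PSL(2,\R)$ --- the lattice $\G$ itself need not be Kazhdan, and one instead runs a second boundary argument, working with a maximal parabolic inside one factor and again invoking the factor theorem together with the irreducibility of $\G$, to show directly that $\G$ admits no infinite amenable quotient; this again suffices in combination with the amenability of $Q$. I expect the main obstacle to be precisely Margulis' factor theorem --- the classification of the measurable $\G$-factors of $G/P$ --- which is the step that genuinely exploits the higher-rank hypothesis (through the Mautner phenomenon and a delicate study of the boundary action) and has no counterpart for rank-one lattices; indeed free groups are lattices in $\PSL(2,\R)$ and flagrantly violate the conclusion.
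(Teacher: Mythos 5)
You should first note that the paper offers no proof of this statement: it is quoted as Margulis' theorem and attributed to \cite{Ma79, Z84}, so your attempt can only be measured against the standard arguments in the literature, not against anything in the text. Your skeleton is the right one (show $Q=\G/N$ is amenable, show it has property (T), conclude finiteness), and your amenability half is an accurate compression of the Margulis--Zimmer argument: the $\sigma$-algebra of $N$-invariant sets is $\G$-invariant and so defines a measurable $\G$-factor of $B=G/P$, the factor theorem identifies it with some $G/R$ with $R\supseteq P$ parabolic, essential triviality of the $N$-action on $G/R$ places $N$ in the normal core of $R$, and irreducibility together with Borel density and the infiniteness of $N$ forces $R=G$; hence $N$ acts ergodically and the $\G$-equivariant map $B\to K$ supplied by amenability of the boundary action is essentially constant, producing the required $Q$-fixed point.

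The genuine gap is the second half, in exactly the case this paper needs. When some simple factor of $G$ has rank one --- e.g.\ $\G=\PSL(2,\mathbb Z[\sqrt 2])<\PSL(2,\mathbb R)\times\PSL(2,\mathbb R)$, the paper's motivating example --- neither $G$ nor $\G$ has property (T), and your substitute, a ``second boundary argument with a maximal parabolic inside one factor, again invoking the factor theorem,'' is not an argument: the factor theorem classifies measurable $\G$-quotients of $G/P$ and belongs entirely to the amenability half; it gives no purchase on the assertion that $\G$ has no infinite amenable quotient, which is essentially the statement to be proved. The actual second half is representation-theoretic and independent of boundary theory: if $Q=\G/N$ is amenable then $1_\G\prec\ell^2(Q)$, inducing to $G$ gives $1_G\prec \mathrm{Ind}_\G^G\,\ell^2(\G/N)\cong L^2(G/N)$, and one must show this forces $G/N$ to carry a finite invariant measure, i.e.\ $[\G:N]<\infty$. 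When every factor has rank at least two this follows at once from property (T) of $G$ (your easy case), but in the presence of rank-one factors it requires Margulis' delicate analysis of unitary representations of the product restricted to the simple factors, exploiting irreducibility through the density of the projections of $\G$ (in modern form, Shalom's and Bader--Shalom's ``property (T) half'' for irreducible lattices in products). Without that ingredient your proof does not cover higher-rank semisimple groups with rank-one factors, which is the only situation in which the paper actually invokes the theorem.
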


\noindent Note that by the Borel density theorem, if $\G$ is a lattice in a semi-simple Lie group with no compact factors and trivial center, then $\G$ is ICC, whence if irreducible cannot be decomposed as \emph{any} direct product of groups. 

The following corollary is a direct application of the Normal Subgroup Theorem to Theorem \ref{main3} and Corollary \ref{main1}.

\begin{cor}[Corollary \ref{main2}] If $\La$ is an irreducible lattice in a higher rank semisimple Lie group, then $L(\La)$ is neither isomorphic to a factor $L(\G_1\times \G_2)$ where $\G_1,\G_2$ are groups in the class $\mathcal S_{nf}$, nor is it isomorphic to a factor of the form $L(\G_1\times\dotsb\times \G_n)$ where each $\G_i\in \mathcal S_{nf}$ and is weakly amenable. 
\end{cor}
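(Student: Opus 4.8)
The plan is to deduce this corollary directly from Theorem \ref{main3}, Corollary \ref{main1}, and the Normal Subgroup Theorem, by arguing that an irreducible higher-rank lattice simply cannot admit the kind of direct product decomposition that those results would force on it. First I would recall that $\La$, being an irreducible lattice in a higher-rank semisimple Lie group $G$ with no compact factors, is ICC: indeed by the Borel density theorem $\La$ is Zariski-dense in $G$, and since $G$ has trivial center (passing to $G/Z(G)$ and noting the center is finite, one may reduce to this case, or invoke that an infinite conjugacy class follows from Zariski density together with the non-compactness of the factors) any nontrivial conjugacy class is infinite. Thus $L(\La)$ is a $\mathrm{II}_1$ factor, and it makes sense to ask whether it is isomorphic to $L(\G_1\times\dotsb\times\G_n)$.

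Next I would suppose for contradiction that $L(\La)\cong L(\G_1\times\dotsb\times\G_n)$ with $n\geq 2$, where either $n=2$ and $\G_1,\G_2\in\mathcal S_{nf}$ (this is the case covered by Corollary \ref{main1}), or each $\G_i\in\mathcal S_{nf}$ is weakly amenable (covered by Theorem \ref{main3} with $t=1$). In either case, the relevant result yields subgroups $\La_1,\dotsc,\La_n<\La$ with $\La_1\times\dotsb\times\La_n=\La$ and $L(\La_i)$ stably isomorphic to $L(\G_i)$. Since each $\G_i$ is non-amenable and ICC, $L(\G_i)$ is a non-amenable $\mathrm{II}_1$ factor, hence $L(\La_i)$ is a diffuse (in particular infinite-dimensional) von Neumann algebra; this forces each $\La_i$ to be an infinite group. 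Now $\La_1$ is an infinite normal subgroup of $\La=\La_1\times\dotsb\times\La_n$ whose quotient $\La/\La_1\cong \La_2\times\dotsb\times\La_n$ is also infinite (as $n\geq 2$ and $\La_2$ is infinite), so $\La_1$ has infinite index in $\La$.

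This contradicts the Normal Subgroup Theorem: since $\La$ is an irreducible lattice in a higher-rank semisimple Lie group with no compact factors, every normal subgroup of $\La$ is either finite or of finite index, so $\La$ cannot contain an infinite normal subgroup of infinite index. This contradiction completes the proof. I do not expect any serious obstacle here; the only point requiring a small amount of care is the reduction to the ICC/trivial-center setting so that $L(\La)$ is genuinely a factor and the Normal Subgroup Theorem applies in the stated form, but this is standard (one may pass to $\PSL$ or quotient by the finite center without affecting the measure-equivalence or von Neumann algebraic conclusions, since a finite normal subgroup changes $L(\La)$ only up to finite-index, amenable perturbations). For the concrete example $\La=\PSL_2(\Z[\sqrt2])$, one invokes the irreducibility of this lattice in $\PSL_2(\R)\times\PSL_2(\R)$ together with the measure equivalence to $\mathbb F_2\times\mathbb F_2$ coming from the common lattice structure, as in \cite{Fu99a}.
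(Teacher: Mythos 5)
Your proposal is correct and follows essentially the same route as the paper: apply Theorem \ref{main3} (or Corollary \ref{main1} in the $n=2$ case) to produce a direct product decomposition $\La=\La_1\times\dotsb\times\La_n$ into infinite subgroups, and then observe that $\La_1$ is an infinite normal subgroup of infinite index, contradicting Margulis' Normal Subgroup Theorem, with the ICC/Borel density remarks handling factoriality exactly as in the paper's discussion.
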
 
We apply this corollary in the following more specific situation. Let $G_1,\dotsc, G_n$ be non-compact, real simple Lie groups of rank one with trivial center, and let $\G_1,\dotsc, \G_n$ be respective lattices. It follows from \cite{CH89} that each $\G_i$ is weakly amenable and from \cite{Oz06} that each $\G_i$ belongs to the class $\mathcal S_{nf}$. Thus if $\La < G_1\times\dotsb\times G_2$ is an irreducible lattice, then $\La$ and $\G_1\times\dotsb\times\G_n$ are measure equivalent groups whose associated group factors are not stably isomorphic.

\end{document}